\newtheorem{thm}{Theorem}
\newtheorem{lem}{Lemma}
\newtheorem{cor}[thm]{Corollary}
\newtheorem{defn}{Definition}
\theoremstyle{remark}
\newtheorem*{remark}{Remark}
\newcommand{\dee}{\mathrm{d}}
\newcommand{\R}{\mathbb{R}}
\newcommand{\T}{\mathbb{T}}
\newcommand{\Z}{\mathbb{Z}}
\DeclareMathOperator{\K}{K}
\title{Syzygies in the  two   center problem}
\author{Holger R.~Dullin}
\address{University of Sydney\\
School of Mathematics and Statistics\\
Sydney, NSW 2006\\
Australia}
\email{holger.dullin@sydney.edu.au}
\author{Richard Montgomery}
\address{UC Santa Cruz\\
Department of Mathematics\\
4111 McHenry\\
Santa Cruz, CA 95064\\
USA}
\email{rmont@ucsc.edu}
\begin{document}

\date{Sep 14, 2015}
\maketitle

\begin{abstract}
We give a complete symbolic dynamics description of the dynamics of Euler's problem of two fixed centers. 
By analogy with the 3-body problem we use the collinearities (or syzygies) of the three bodies as symbols. 
We show that motion without collision on regular tori of the regularised integrable system are given by 
so called Sturmian sequences. Sturmian sequences were introduced by Morse and Hedlund in 1940.
Our main theorem is that the periodic Sturmian sequences are in one to one correspondence with 
the periodic orbits of the two center problem. Similarly, finite Sturmian sequences correspond to 
collision-collision orbits. 
\end{abstract}

%
%
%
%
%

\section{Introduction}

Associating a symbolic dynamics   to a smooth flow  goes back at least to Hadamard 
\cite{Hadamard1898}  with serious contributions
due to Morse and Hedlund \cite{MorseHedlund38, MorseHedlund40}.  The advent of Smale's Horseshoe \cite{Smale63}  and  Anosov flows \cite{Anosov67} made symbolic dynamics into
an  area of study connected to  smooth dynamical systems in a central way. 
In the area of  celestial mechanics, a number of partial results have been obtained about a tentative symbolic dynamics for  the planar three-body problem. See for example
\cite{Mont_action},  \cite{Mont_InfinitelyMany}, and \cite{MM2}. 
The symbols of this symbolic dynamics are the ``letters'' 1, 2 and 3 with letter $i$ representing an instant during the motion
where the three bodies have become collinear with body $i$ in the middle.   For historical reasons we call the resulting sequences ``syzygy sequences''.   
A big open question is, for the planar Newtonian three-body 
problem ``what are the possible syzygy  sequences  realized by motions of the three bodies?''   
Here, we answer this question for a much simpler   integrable system:
the planar two-center problem as investigated by Euler \cite{Euler1760}.  
Classical reference on this system are Charlier \cite{Charlier02} and Whittaker \cite{Whittaker37}, and a modern treatment is given in \cite{WDR03}.
The symbols here have precisely
the same meaning as for the full planar three-body problem but the system is completely integrable, hence we can completely solve the question. 
We hope  that this full solution in the integrable limit may shed   light on the honest three-body problem. 

\section{Symbolic dynamics}

Consider a  flow $\phi^t: M \to M$ on a manifold  $M$.
Fix a finite  collection of co-dimension one surfaces $S_i \subset M$, $i = 1, \dots, n$ 
(possibly with boundary) which we call   {\em windows}. 
 Attach   symbol $s_i$ to window $S_i$.
Associate  a   finite symbol sequence  to each  finite orbit segment  by 
   recording   the windows  in the order in which they are hit. 
  Thus  orbit segment  $\phi^t(x_0), 0 \le t \le T$ has  symbol sequence   $s_{i_1},s_{i_2}, \ldots s_{i_N}$.
 provided  $\phi^{t_a}(x_0) \in S_{i_a}$, $a =1, \ldots , N$ where $0 \le t_1 < t_2 < \ldots t_N \le T$ are the times at which the orbit hits
 {\it some window}. (The sequence is empty if no window is hit.) 
 This finite symbol   sequence is unique  provided the orbit
 avoids the  intersections of the  windows. The sequence is   stable with respect to small deformations
 of the initial condition $x_0$ provided the   orbit $\phi^t (x_0)$ is transverse to the windows.  
Letting $T \to \infty$ generally gives an infinite sequence. Running time backwards gives a bi-infinite sequence of symbols.

\begin{figure}[thb]
\includegraphics[]{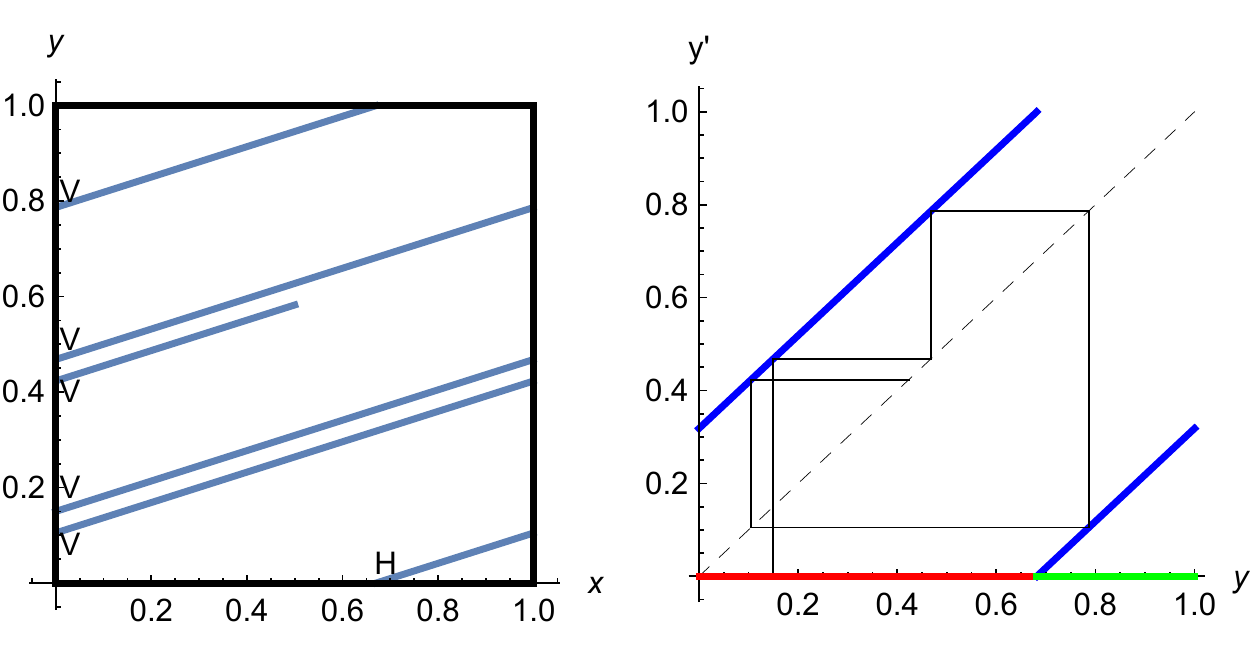}
\caption{Left: Linear flow on the torus with initial condition $(0, 0.15)$ and slope $W = 1/\pi$. 
The symbol sequence of the shown orbit segment is $VVVHVV$.
Right: Corresponding Poincar\'e map from $x=0$ to $x=1\equiv0$ 
giving the exponents $n_i = 0,0,1,0$. 
} \label{fig:Pmap}
\end{figure}

\subsection{Symbolic dynamics for integrable systems}
\label{Sturm}

Suppose now that our flow on $M$ is a  Liouville integrable system with $2$ degrees of freedom.
Then  $M$ is  a symplectic manifold $M$ of dimension $4$ endowed with  $2$ 
smooth  functions $G,H: M \to \R $ in involution,  independent almost everywhere,
one of which, $H$, we take to be the energy of the system.    
The pair  $F = (G,H) : M \to \R^2$ is called the integral map.
The Liouville-Arnold theorem states that if $c$ is a regular value of the integral map, 
then every compact connected component of the pre image $F^{-1}(c)$ is a $2$-torus.
Denote one of these tori by $\T_c$. Then in a neighbourhood $U$ of $\T_c$
one can introduce action-angle variables $(I_1, I_2,  \theta_1, \theta_2)$,  
such that the dynamics is given by 
\[
   \dot \theta_j = \omega_j(I_1, I_2), \quad \dot I_j = 0, \qquad j = 1, 2  \,.
\]
The tori are labelled by the value of the action $I = (I_1, I_2)$.
On each torus the   dynamics is  straight line  motion when
viewed from the covering space $\R^2$ of that torus: $\theta_i(t) = \theta_i(0) + \omega_i  (I)t$.
When changing from one torus to another,   $I$ and in general $\omega_i(I)$ will change, 
and hence  the slope of the lines changes. 

If   the windows $S_i \subset M$ intersect the torus $\T_c$ 
transversally then they  form  a system of windows      $\T_c \cap S_i$ (possibly empty)  for the restricted  linear dynamics on $\T_c$.
In our situation of the planar two-center problem the   windows will be   horizontal or vertical
lines: $\theta_1 = const$ or $\theta_2 = const$.    With this in  mind, the following example which goes back 
to Morse and Hedlund \cite{MorseHedlund40} is central to our work.

{\bf Example}: Consider a  linear flow on $\T^2 = \R^2 \mod \Z^2$ 
with positive  slope $m = \omega_2/\omega_1$. As  windows take the two  
two circles obtained by projecting the   x-axis and the y-axis of $\R^2$
onto the torus. Denote their symbols `H' for horizontal   and `V' for vertical.
Lifted to the $xy$ plane, the windows yield  the lines of    graph paper
intersecting at the lattice points.    The lifted orbits are the lines 
\begin{equation}
\label{line}
y = mx + b.
\end{equation}
Every  such orbit has a unique
 symbol sequence 
 as long as it stays away from lattice points.

The ratio of $H$'s to $V$'s in the symbol sequence of an   orbit  tends to the slope $m$
as the length of the orbit tends to infinity.   
The precise  sequence of $H$'s and $V$'s   
depends on the initial condition $y_0$, i.e., on   $b$. 
The resulting sequences are called ``Sturnian sequences'' 
after  Morse and Hedlund  coined this term in \cite{MorseHedlund40}.    
To describe this sequence let 
$\lfloor y \rfloor \in \Z$
denote the   greatest integer less than or equal to $y$, for $y$ a real number.
A bit of experimentation drawing lines on graph paper  suggests  that 
the associated symbol sequence always has the form
 \begin{equation}
\label{rotSeq_A}
 \ldots V H^{n_1} V H^ {n_2} V H ^{n_3} \ldots
 \end{equation}
where 
each nonnegative integer $n_i$ is either $\lfloor m \rfloor $ or $ \lfloor m \rfloor + 1$.  
Which  one though? 
To describe the $n_i$,   let 
$\{y \} $ denote the  fractional part of a real number $y$ so that $y = \lfloor y \rfloor + \{y \} $
with 
 $0 \le \{y\} <1 $.  Use  the Poincare map associated to
cutting across the vertical circle:  $y \mapsto y + m$ so as to obtain
a sequence of points $y_i \mod 1$ on the circle,  these being the orbit of $y_0$ uder the Poincare map, and their   associated fractional  parts.  
$\{y_k \} = \{ y_0 + k m \}$.  Then  
 \begin{equation}
\label{rotSeq}
   n_k = 
\begin{cases}
     \lfloor m \rfloor ,& \text{if } \{y_{k-1}\} + \{m \} <1 \\
    \lfloor m \rfloor + 1,& \text{if } \{y_{k-1}\} + \{m \}  >1\\        
\end{cases} \,.
\end{equation}
Here we have started with the point at $(0,y_0)$ and marked the initial `$V$'  in equation \eqref{rotSeq_A} as corresponding to ``$s_0$''
accordingly.
An equivalent  formula for the exponents  is $n_k =  \lfloor m + \{y_{k-1}\}  \rfloor$.

\begin{defn}  
\label{Sturmword}The sequence of  integers $\{n_k \}_{k \in \Z}$ defined by equation \eqref{rotSeq}, or the corresponding symbol sequence of $H$'s and $V$'s
will be called  the ``Sturmian sequence for rotation number $m$ and intercept $y_0$'',
assuming that the line $y = m x + y_0$ does not hit any lattice points.
\end{defn}

\begin{lem} \label{lem:rational} Suppose that the slope $m$ is rational. Then when  viewed as a periodic word, the Sturmian sequence associated to
rotation number $m$ and intercept $y_0$ is independent of the choice of intercept.  Thus we can speak of ``the Sturmian word
for $m$''  in case $m$ is rational.
\end{lem}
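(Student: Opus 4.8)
The plan is to translate the problem entirely into the arithmetic of residues modulo $q$, where $m=p/q$ is written in lowest terms. First write $m=\lfloor m\rfloor+r/q$ with $r=p-q\lfloor m\rfloor\in\{0,1,\dots,q-1\}$; since $\gcd(p,q)=1$ the Euclidean algorithm gives $\gcd(r,q)=1$, so $r\ge 1$ unless $q=1$ (the case $q=1$, i.e.\ $m$ an integer, being trivial). The Poincar\'e map $y\mapsto y+m$ on $\R/\Z$ satisfies $y_0+qm\equiv y_0\pmod 1$, and $p/q$ generates the cyclic group $\tfrac1q\Z/\Z$ of order $q$ because $\gcd(p,q)=1$; hence the orbit $\{y_k\}$ is exactly the coset $y_0+\tfrac1q\Z/\Z$, consisting of $q$ distinct points. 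In particular the exponent sequence $(n_k)$ of \eqref{rotSeq} is genuinely $q$-periodic, so ``viewed as a periodic word'' is meaningful.

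Next I would use the hypothesis. I claim the line $y=mx+y_0$ avoids all lattice points if and only if $\{y_0\}$ is not an integer multiple of $1/q$: if $\{y_0\}=a/q$ then $\gcd(p,q)=1$ lets one solve $px\equiv -a\pmod q$, and at such an integer $x$ the line hits a lattice point; conversely, if $\{y_0\}\notin\tfrac1q\Z$ then no $y_k=\{y_0+kp/q\}$ lies in $\tfrac1q\Z/\Z$, so the line never meets the lattice. Under the hypothesis, then, every $y_k$ lies in a unique \emph{open} arc $A_\ell:=(\ell/q,(\ell+1)/q)$, $\ell\in\Z/q\Z$; call $\ell(y_k)$ its label. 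Since the coset $y_0+\tfrac1q\Z/\Z$ is invariant under rotation by $1/q$ and this rotation cyclically permutes the $q$ arcs, the orbit meets each arc in exactly one point.

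Now I would rewrite the exponent rule in terms of labels. Splitting $\lfloor m+\{y_{k-1}\}\rfloor=\lfloor m\rfloor+\lfloor\{y_{k-1}\}+r/q\rfloor$, the second summand equals $1$ exactly when $\{y_{k-1}\}>1-r/q=(q-r)/q$, which (since $A_\ell$ lies entirely to the right of $(q-r)/q$ iff $\ell\ge q-r$) happens exactly when $\ell(y_{k-1})$ lies in the fixed set $B:=\{q-r,q-r+1,\dots,q-1\}\subset\Z/q\Z$ of size $r$ — and there is no boundary ambiguity precisely because the lattice-point case has been excluded. The Poincar\'e map is rotation by $p/q$, which sends $A_\ell$ to $A_{\ell+p}$, so the labels along the orbit form the progression $\ell(y_{k-1})=\ell_0+(k-1)p\bmod q$, where $\ell_0=\ell(y_0)$. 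Therefore
\[
 n_k=\lfloor m\rfloor+\mathbf 1_B\bigl(\ell_0+(k-1)p\bmod q\bigr),
\]
and the whole periodic sequence depends on the intercept only through the single residue $\ell_0$.

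Finally, for two admissible intercepts with labels $\ell_0$ and $\ell_0'$, the fact that $\gcd(p,q)=1$ lets us write $\ell_0'=\ell_0+jp\bmod q$ for a unique $j\in\{0,\dots,q-1\}$; plugging this into the displayed formula shows that the sequence for $\ell_0'$ is the $j$-fold shift of the sequence for $\ell_0$, so the two periodic words coincide. (Consistency check: $\sum_{k=1}^q n_k=q\lfloor m\rfloor+|B|=q\lfloor m\rfloor+r=p$, matching that the ratio of $H$'s to $V$'s per period is $p/q=m$.) The only genuine subtlety is the one flagged above — verifying that ``avoids lattice points'' is exactly the condition making the arc label, and hence the exponent rule, unambiguous along the entire orbit; the remainder is bookkeeping modulo $q$.
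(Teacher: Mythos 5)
Your proof is correct and takes essentially the same route as the paper's: partition the circle into the $q$ open arcs $(\ell/q,(\ell+1)/q)$, observe that the exponent sequence depends only on the arc containing the intercept, and use $\gcd(p,q)=1$ to conclude that changing arcs merely shifts the periodic word. The paper phrases these two steps geometrically (translating the line within an arc without hitting lattice points, then following the line forward until it lands in the other arc), whereas you make them explicit with the formula $n_k=\lfloor m\rfloor+\mathbf 1_B\bigl(\ell_0+(k-1)p\bmod q\bigr)$ and the residue bookkeeping; that is a sharpening of the same argument, not a different one.
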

{\sc Proof.} Suppose   that $m = p/q$ is rational, with $p, q$ relatively prime.  Then the  initial conditions $(0,b)$, $0 < b < 1$ which pass thru the lattice points
are those for which $b = i/q, i =1, 2, \ldots, q-1$ and they cut the basic interval into $q$ equal intervals. Within any one of these intervals
the sequence is constant, since the associated line $y = mx + b$ can be translated without hitting lattice points.   
What about when we move our initial condition $(0,b)$ to a different interval?   
Use the lattice translations to decompose all the vertical unit intervals between vertically
adjacent  lattice
points into these $q$ equal segments.  Because $p$ and $q$ are relatively prime, any line starting in the interval $0 < y < 1/q$ eventually
hits all of the $q$ other segments.  We can thus imagine starting at any one of these $q$ segments, but at a later time
along the line, so arriving at a shifted version of the same sequence which corresponds to $b$ lying in this other segment. QED

 \begin{remark}  
 When $m$ is irrational the Sturmian word   depends on the intercept $y_0$ in an interesting and nontrivial manner.
\end{remark}

\begin{remark}
Sturmian sequences  arise in  billiards \cite{DB98}, in    the Henon map \cite{DMS98c}, and   in twist maps of the cylinder \cite{AubryLeDaeron83}.
\end{remark}


\begin{figure}
\includegraphics[]{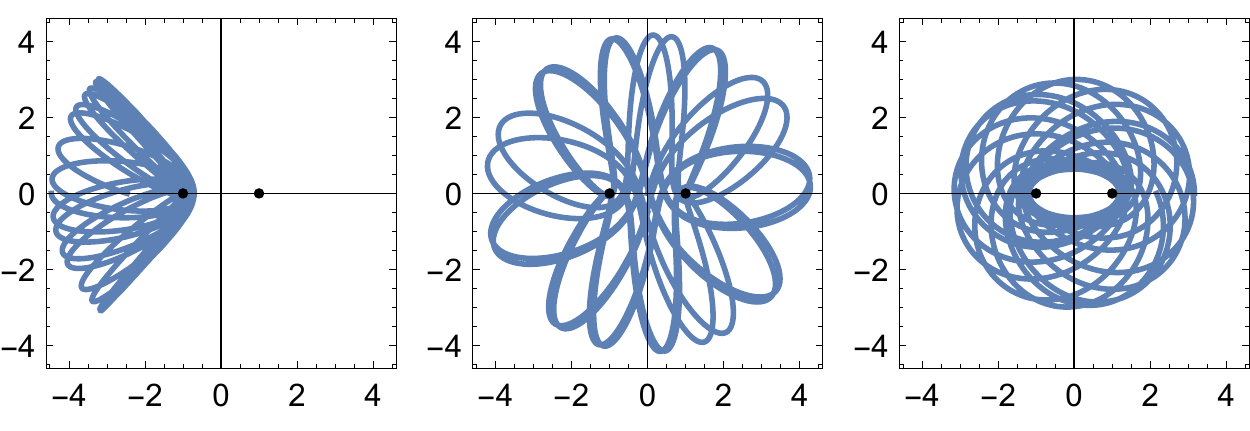}
\caption{Orbits in the $(x,y)$-plane for the symmetric case $m_1 = m_2 = 1/2$, $d=1$, and $h = -0.23$.
Type {\em Satellite} (left) intersects the $x$-axis near one of the centers only. 
Type {\em Lemniscate} (middle) intersects the $x$-axis near both centers. 
Type {\em Planetary} (right) does not intersect the segment between the centers. 
} \label{fig:caustics} 
\end{figure}

\section{The Two Center Problem and its tori.}

 We consider a unit test particle moving in the $xy$ plane $\R^2$ attracted by two fixed masses of mass $m_1$ and $m_2$  called ``centers'' and  located at the points $(-d, 0)$ and $(d,0)$.
The Hamiltonian in canonical coordinates $x, y, p_x, p_y$ with symplectic form $dx\wedge dp_x + dy \wedge dp_y$ 
is 
\begin{equation}
     H = \frac12( p_x^2 + p_y^2)  - \frac{m_1}{\sqrt{ (x+d)^2 + y^2}} -  \frac{m_2}{\sqrt{ (x-d)^2 + y^2}}.
\label{energy}
\end{equation}
Euler \cite{Euler1760} proved that the system is integrable with independent second integral 
\begin{equation}
     G  =\frac12 ( x p_y - y p_x)^2 + \frac12 d^2 p_x^2 + d x \left( \frac{m_1   }{\sqrt{(x+d)^2 + y^2}} - \frac{m_2 }{\sqrt{ ( x - d)^2 + y^2 } }\right) \,.
\label{G}
\end{equation}
$(G,H)$  form our    ``integral map'',   a map from phase space 
to the space of values of these two integrals.

Integration involves switching to regularizing variables for configuration space which are  denoted $\lambda, \nu$ below. 
For details see section \ref{sec:Euler} below.
These variables  have the  local effect of a Levi-Civita regularization about each of the two centers.  

\begin{defn}
\label{reg_torus} By a regular torus $\T$ we mean a compact connected component of an inverse image of a regular 
value of the integral map, expressed in regularizing variables.   By a regular periodic orbit we mean a periodic orbit without collisions 
lying on a regular torus. 
\end{defn}
Excluding the  critical values of the integral map,  there are precisely three topological types
of tori, denoted by S, L and P.  The projection of a torus of each type
onto configuration space are as shown in figure~\ref{fig:caustics}.
After regularization we can define ``action-angle'' variables
with angles  $\theta_1$, $\theta_2$.
These angles identify a regular torus  with $\R^2/ \Z^2$.  
We will say a torus, expressed in these variables,
has been ``flattened''.

\subsection{Windows, Syzygies} 
The phase space is $T^*( \R^2 \setminus \Delta)$ where $\Delta = \{ (-d,0), (d, 0) \}$
are the collisions. 
When the test  particle crosses the   $x$-axis the three masses become collinear. 
The centers  divide the $x$-axis into three intervals,  labelled ``1'' for the right infinite  interval $(-\infty, d)$,
``2'' for for the left infinite interval $(d, \infty)$. and  
``3'' for the central finite interval $-d < x < d$.  
This labelling corresponds precisely to the syzygy sequence labelling of the references \cite{Mont_InfinitelyMany} and \cite{MM2} mentioned above
provided  the test particle is labelled ``3'',  the right center as   ``1'' and left center  as ``2''.

The inverse image of the collinear locus, with its three windows, drawn
on the flattened torus  is shown for each type  in figure~\ref{fig:windows}. 
These pictures are at the heart of our analysis. 
We continue to denote the  inverse image of our syzygy  windows 1, 2, and 3,  under the regularization map, intersected with the torus,
and flattened by the symbols  1, 2, or 3. These windows are  a  collection  of  orthogonal lines.
How did a decomposition of   a straight line -- the $x$-axis -- into intervals   become a system of orthogonal lines? 
A regularization, qualitatively similar to the   the Levi-Civita regularization,
is needed to write down the action angle variables and hence express the torus.
At the  core of   Levi-Civita regularization is the squaring map,  the conformal map $\xi + i \eta \mapsto (x+ i y) = (\xi + i \eta)^2$
under which the inverse image of a straight line through the origin becomes a ``cross'': a pair of orthogonal lines,
and this fact explains how the $x$-axis turned into a collection of orthogonal lines.  See section \ref{sec:Euler} for details.

\begin{figure}
\includegraphics[width=15cm]{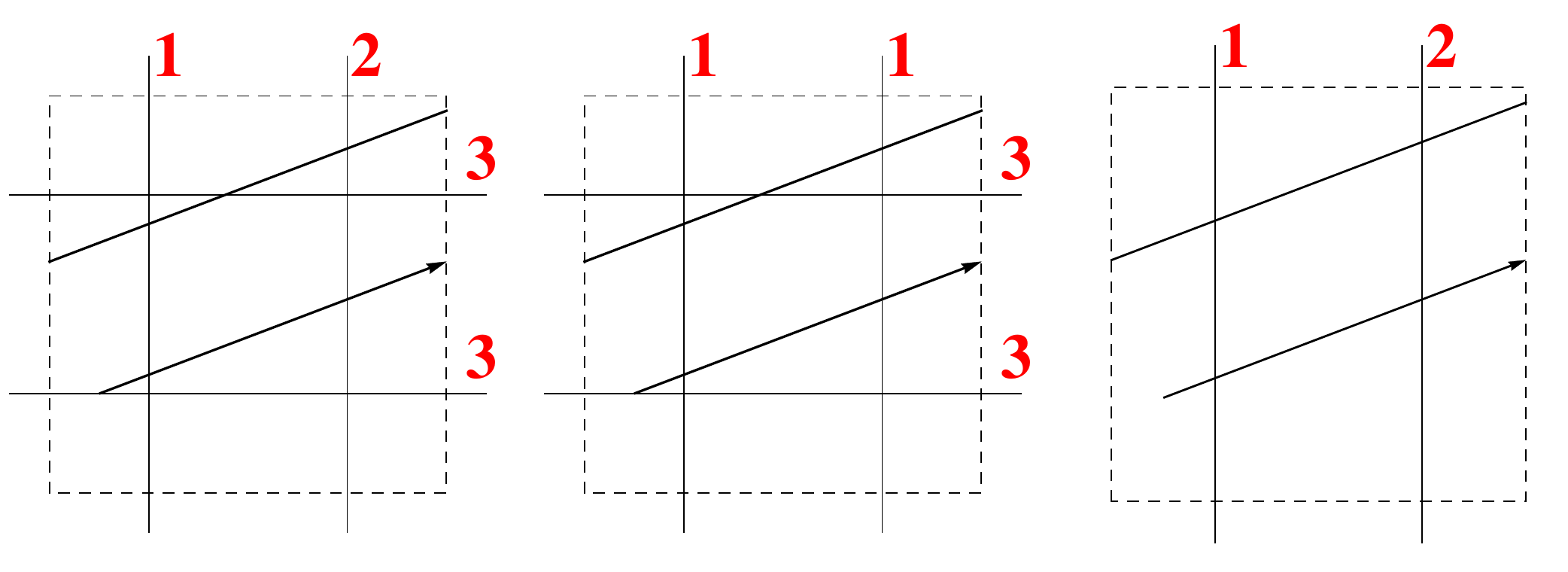}
\caption{The torus (dashed) in angle variables $\theta_1, \theta_2$ 
with a trajectory and windows with corresponding symbols  for type L
(Lemniscate), S, (Satellite), P (Planetary). 
The corresponding symbol sequences for the orbit segment shown
(line with an arrow, continued with periodic boundary conditions) are 312132, 311131, 1212, respectively.}
\label{fig:windows}
\end{figure}

\section{Main results:  syzygy sequences for the two center problem}

The main result is
\begin{thm} \label{thm:PerSyz}
The periodic syzygy sequences for regular periodic orbits of the two center problem are of the form:
\vskip .2cm
L type:  $1 3^{n_1} 2 3^{n_2} 1 3 ^{n_3} 2 \ldots 3^{n_s}$  (1's and 2's alternating)
\vskip .2cm
S type:  $1 3 ^{n_1} 1 3 ^{n_2} 1 3 ^{n_3} 1 \ldots 3^{n_s}\,\,\,$ or $\,\,\,2 3 ^{n_1} 2 3 ^{n_2} 2 3 ^{n_3} \ldots 23^{n_s}$ 
\vskip .2cm
P type:  $(12)^q$
\vskip .2cm
\noindent where the exponents $n_i$    are those of the Sturmian sequence \eqref{rotSeq} associated to the  rational rotation number $W$ for that orbit's torus 
(Definition~\ref{Sturmword}). The length of the fundamental sequence is $2(p+ q)$ where $W = p/q >0$.  
(For the range of possible rotation numbers see the final sentence of the  next theorem.)
\end{thm}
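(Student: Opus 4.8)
The plan is to reduce the theorem, one torus type at a time, to the Morse--Hedlund Example of Section~\ref{Sturm} together with Lemma~\ref{lem:rational}. The substantive step is geometric: I would first prove a preliminary lemma making figure~\ref{fig:windows} precise, computing the positions and labels of the three syzygy windows on each flattened regular torus $\T\cong\R^2/\Z^2$ directly from the action--angle construction of Section~\ref{sec:Euler}. Using the regularizing elliptic coordinates $\lambda,\nu$, the explicit action integrals, and the fact that a squaring map $\zeta\mapsto\zeta^2$ carries a line through the origin to a ``cross'' of two orthogonal lines, one finds that on an L- or S-torus the windows $1$ and $2$ (the two infinite pieces of the $x$-axis) are vertical circles $\theta_1=\mathrm{const}$ while window $3$ (the segment between the centers) is a union of horizontal circles $\theta_2=\mathrm{const}$, and that on a P-torus only the vertical circles $1,2$ survive (the projected orbit misses the segment, so there is no $3$-window). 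The same lemma records the labelling: along an L-torus the vertical circles alternate $1,2,1,2,\dots$; along an S-torus they all carry the label $1$, or on the other S-component all carry $2$; along a P-torus they again alternate $1,2$.

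Granting this picture the rest of the argument is essentially a transcription of the Example. A regular periodic orbit on $\T$ lifts to a line $\ell:\,y=Wx+b$ of positive rational slope $W=p/q$, $\gcd(p,q)=1$, in the covering plane $\R^2$; being collision-free it avoids the lattice of window--window intersection points, so by Lemma~\ref{lem:rational} its syzygy sequence, read as a cyclic word, is independent of $b$ and depends only on $W$. Recording the crossings of $\ell$ with the windows, each crossing of a vertical circle contributes a $1$ or a $2$ according to the labelling pattern above, and each crossing of a horizontal circle contributes a $3$; after rescaling so that the windows become the standard integer grid, the block of $3$'s between two consecutive vertical crossings has length exactly the Sturmian exponent $n_k=\lfloor W+\{y_{k-1}\}\rfloor$ of \eqref{rotSeq}. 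This yields, for the L- and S-tori, the stated block forms with exponents equal to the Sturmian word for rotation number $W$; for the P-tori there are no $3$'s and the alternation of the vertical labels gives $(12)^q$.

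It remains to count the fundamental period. Following one fundamental period of $\ell$ on $\T$, the slope-$p/q$ line closes up after $\theta_1$ has advanced by $q$ and $\theta_2$ by $p$; counting the vertical and horizontal window circles met over that span, and taking into account the pair of parallel circles in each direction produced by the regularizing squaring maps, gives total length $2(p+q)$ for the L- and S-types. The P-type count and the determination of exactly which rationals $W$ occur on each family of tori I would extract from the explicit frequency map $(I_1,I_2)\mapsto(\omega_1,\omega_2)$, deferring the precise statement of the range to the next theorem.

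I expect the main obstacle to be the preliminary geometric lemma: correctly locating and labelling the windows on each flattened torus, keeping careful track of the doubling introduced by the squaring maps and of the global identifications that define the angles $\theta_1,\theta_2$. Everything afterwards is the combinatorics already packaged in the Example and in Lemma~\ref{lem:rational}. A secondary point needing care is genericity of the intercept: because a regular periodic orbit is collision-free, $\ell$ meets no window--window intersection, so its cyclic word is unambiguous, while the intercepts one must exclude are exactly those producing collision or collision--collision orbits.
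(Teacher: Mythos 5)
Your overall architecture coincides with the paper's: establish the window picture of figure~\ref{fig:windows} on the flattened torus, then transcribe the Morse--Hedlund Example, invoke Lemma~\ref{lem:rational} for independence of the intercept, and count crossings to get length $2(p+q)$ (resp.\ $2q$ for P). The labelling you assign to the vertical and horizontal circles in each of the three cases, and the crossing count, agree with the paper's Lemma~\ref{lem:combi}.

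The genuine gap is the one fact you defer to your ``preliminary geometric lemma'' and propose to extract from explicit action integrals: that the two parallel window circles in each pair sit \emph{exactly} half a lattice unit apart. This is not a bookkeeping detail that a rescaling can absorb. If the two horizontal circles were at $\theta_2=0$ and $\theta_2=a$ with $a\neq 1/2$ (and similarly for the vertical pair), no affine change of coordinates takes the window configuration to the standard integer grid while keeping both circles of a pair on lattice lines, and the resulting coding of a slope-$W$ line is a more general three-interval-exchange word, not the Sturmian word for rotation number $W$; the exponents would then fail to be $\lfloor W+\{y_{k-1}\}\rfloor$ and could take three distinct values. So the identification with \eqref{rotSeq} stands or falls on the exact half-unit spacing. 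Your proposed route through the explicit elliptic-function expressions for $\theta_1(\nu,p_\nu;g,h)$, $\theta_2(\lambda,p_\lambda;g,h)$ could in principle verify it, but the paper (section~\ref{halfaunit}) avoids that computation entirely: the involutions $R_\lambda(\lambda,p_\lambda)=(-\lambda,-p_\lambda)$ and $R_\nu(\nu,p_\nu)=(\pi-\nu,-p_\nu)$ preserve the separated Hamiltonians, hence commute with their flows, hence act in each angle variable as a translation $\theta\mapsto\theta+p$; being nontrivial involutions forces $p=1/2$; and each involution visibly exchanges the two window points of its pair. You should either reproduce this symmetry argument or carry out the elliptic-integral computation explicitly; as written, the step ``after rescaling so that the windows become the standard integer grid'' assumes the conclusion. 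A second, smaller omission of the same kind: in the S and P cases you assert which momenta pair up on which component of the $\nu$-level set (all label $1$, all label $2$, or alternating $1,2$); this too follows from tracking the orbits of $R_\nu$ on the one-degree-of-freedom phase portraits rather than from the squaring-map heuristic alone.
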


 See figure~\ref{fig:typeLorbits} for   examples of periodic type L orbits, their rotation numbers and  syzygy sequences
 for equal masses.  See  figure~\ref{fig:typeSorbits} for  examples of periodic type S orbits, their rotation numbers  and  syzygy sequences in a case of distinct masses. 
 
 \begin{figure}
\includegraphics[width=14cm]{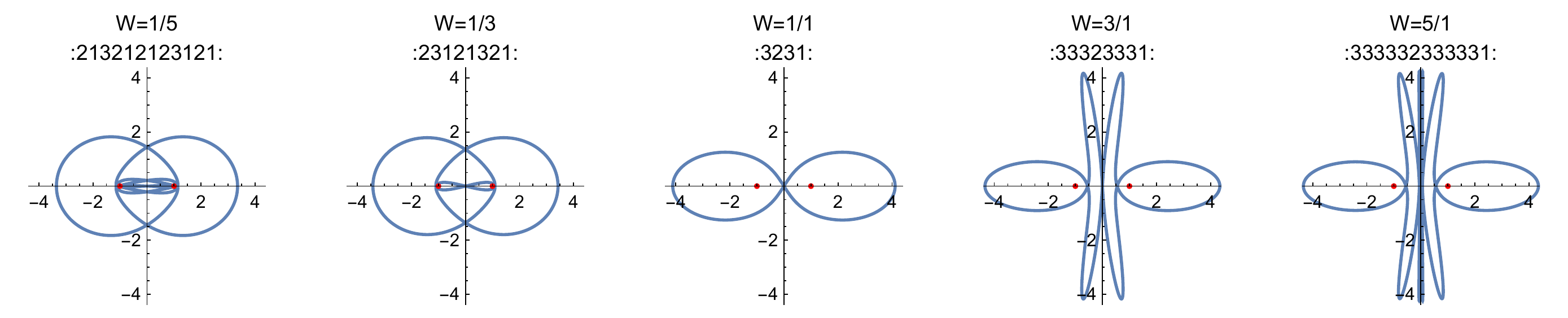}
\includegraphics[width=14cm]{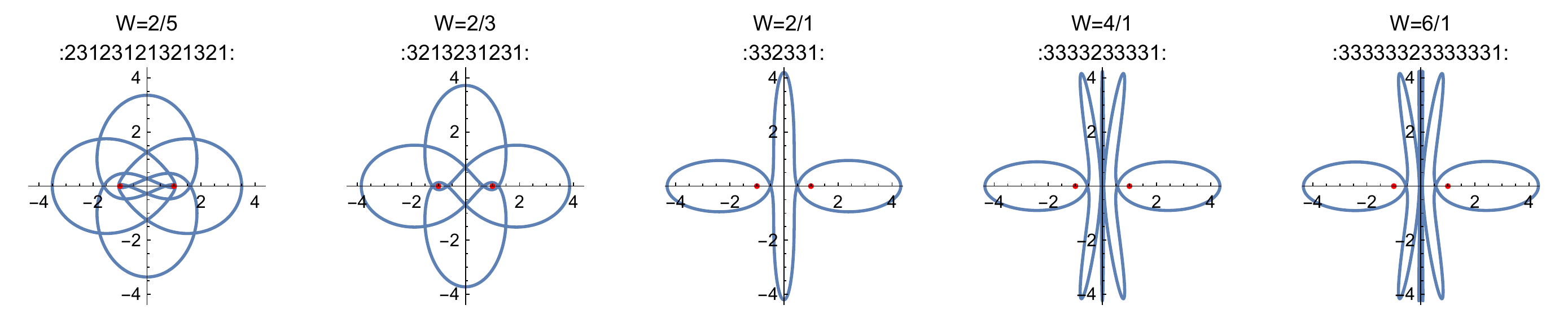}
\includegraphics[width=14cm]{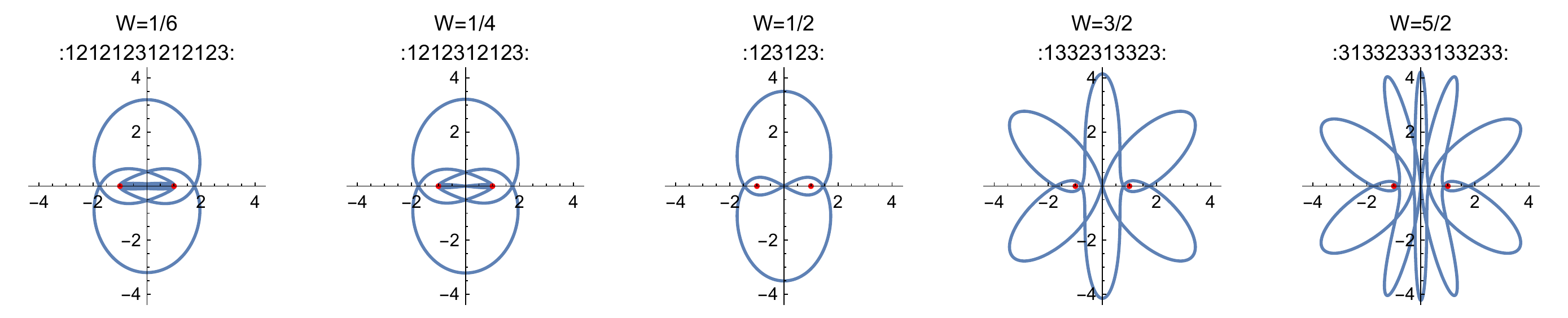}
\caption{Periodic orbits of Lemniscate type in the $(x,y)$ plane for the equal mass case $m_1 = m_2 = 1/2$, $d = 1$, $h = -0.23$
with various rotation numbers $W$. To the right more oscillations along the $y$-axis are added, to 
the left more oscillations along the $x$-axis are added, keeping the parity of $W$ the same along each row.}  \label{fig:typeLorbits}
\end{figure}

\begin{figure}
\includegraphics[width=8.4cm]{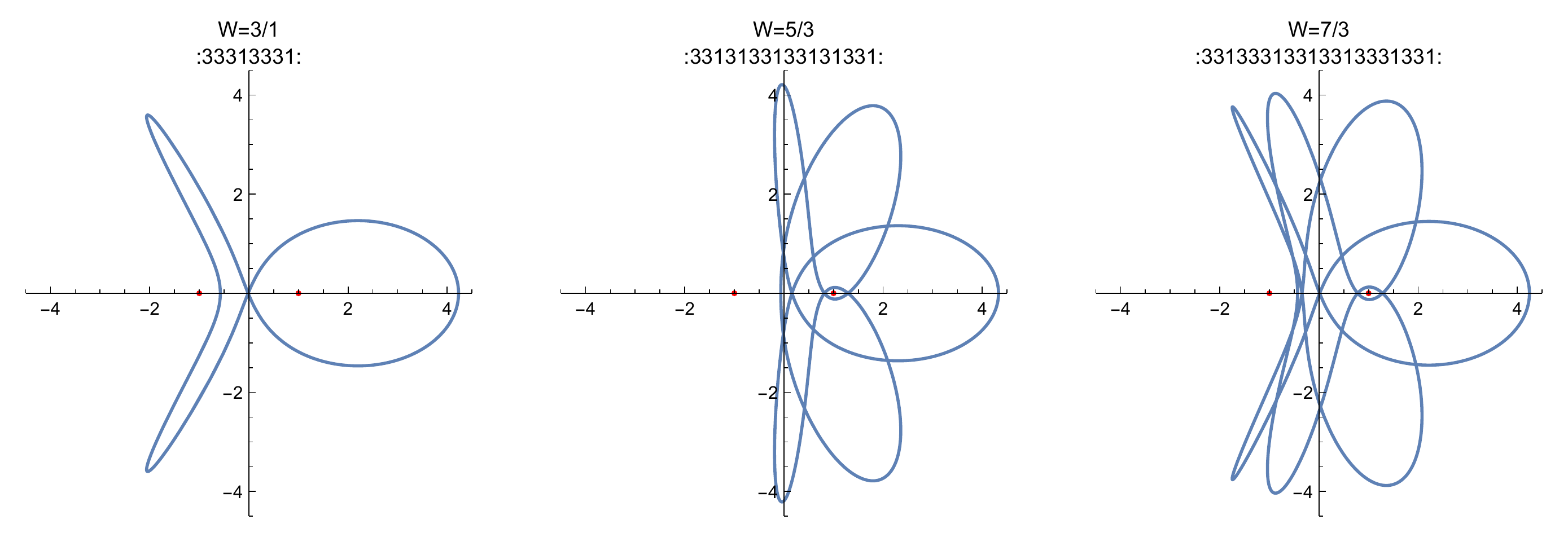}
\includegraphics[width=8.4cm]{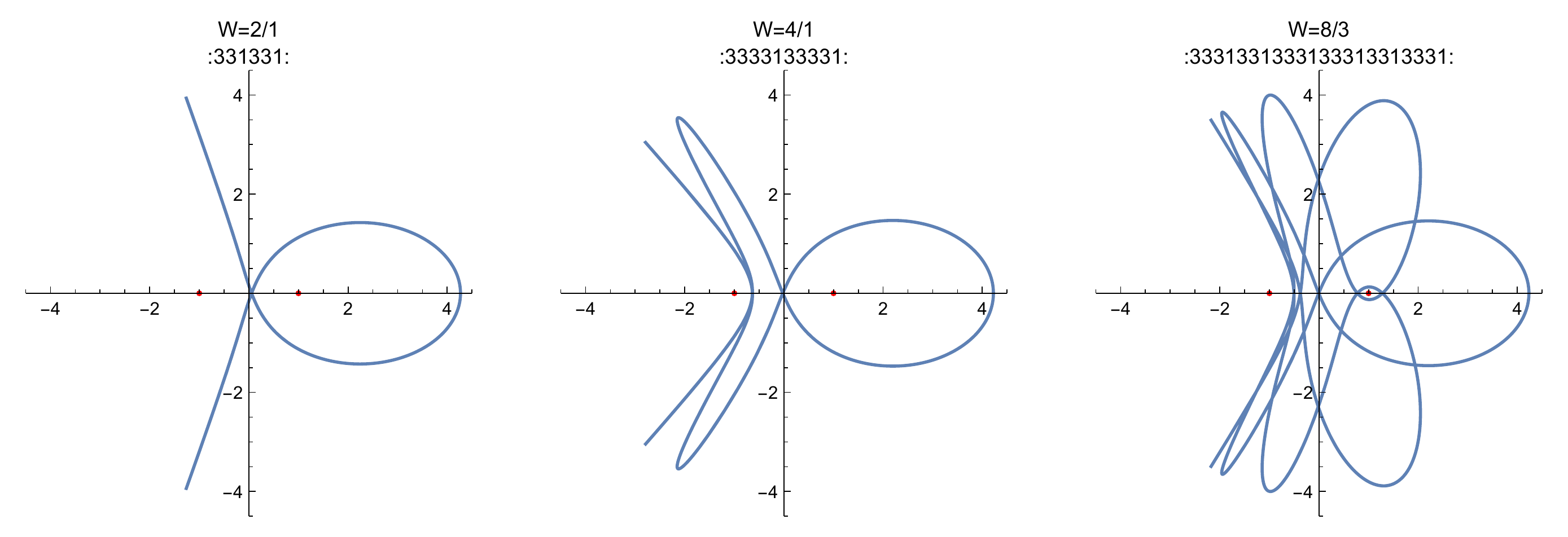}
\includegraphics[width=8.4cm]{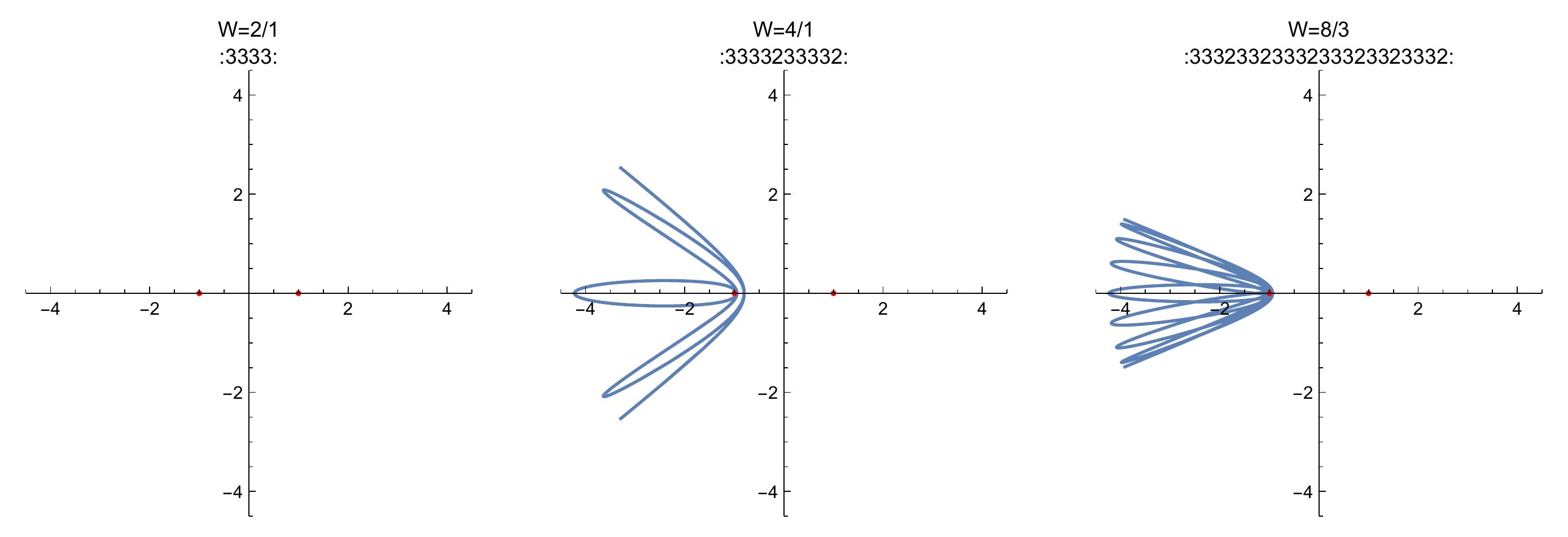}
\includegraphics[width=8.4cm]{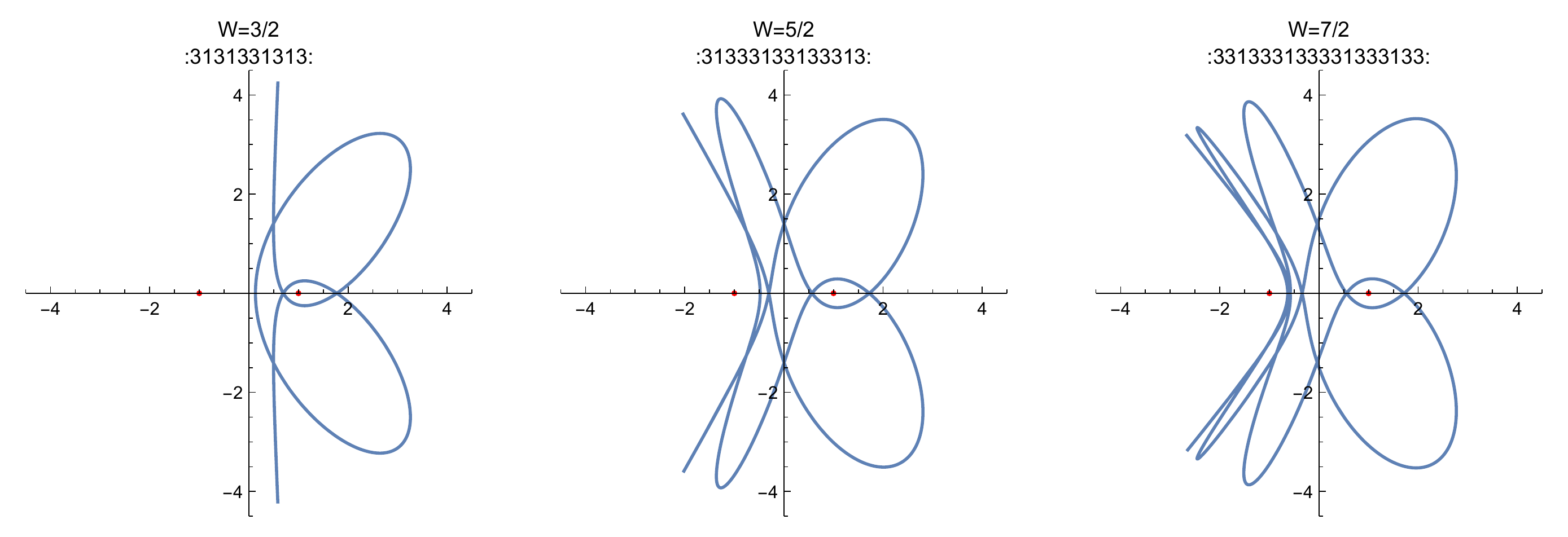}
\caption{Periodic orbits of Satellite type in the $(x,y)$ plane for the asymmetric case $m_1 = 1/3$, $m_2 = 2/3$, $d = 1$, $h = -0.23$
with various rotation numbers $W$. Parity if the same along each row. The third row shows tori of type S1, all other of type S2.
}  \label{fig:typeSorbits}
\end{figure}

 {\sc Missing periodic orbits.} The only periodic orbits whose sequences are not represented in the theorem above
 are those on non-regular level sets of the integral map. 
 There are five such families of periodic orbits corresponding to critical values of the integral map,
 see figure~\ref{fig:EM}. 
Three of these five correspond to periodic orbits that are collinear, i.e.\ they move along the axis 
$y=0$ that contains the two centers, and as such don't have well define syzygy sequences. 
The other two families have 
either syzygy sequence 3 (separating type L from type S, orbit is on the symmetry axis $x=0$ when then masses are equal), 
or syzygy sequence 12 repeated (orbit encircling both centers, the outer boundary of type P).

 Our main result is a corollary of the next theorem
 which  includes the infinite aperiodic orbits -- the dense windings on the tori.
\begin{thm} \label{thm:AllSyz}
The syzygy sequences   realized by  regular non-collision orbits for the two-center problem are precisely  one of  the following possibilities: 
$$\ldots 121212 \ldots, \text{in the  P case,}$$ 
$$
 \dots 1 3^{n_1} 2 3^{n_2} 13^{n_3} 2 \dots, \text{ in the L case,}
$$
$$
\dots 1 3^{n_1} 1 3^{n_2} 1 3^{n_3} 1 \dots, \text{ or } \dots 2 3^{n_1} 2 3^{n_2} 23^{n_3} 2\dots\text{ in the S case}
$$
In the last two cases  the integer sequence $n_i$ occurring  is the Sturmian sequence (Definition \ref{Sturmword}) associated to the rotation number $W = W(g,h)$ 
of that orbit's torus $\T(g,h)$,  and 
 any possible intercepts $y_0$.  
The  range of  rotation numbers $W$ determining the   $n_i$  can be read off from tables 
\ref{WtypeL} and \ref{WtypeS} below. 
\end{thm}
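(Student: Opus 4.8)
The plan is to reduce Theorem~\ref{thm:AllSyz}, one torus type at a time, to the Sturmian Example of Section~\ref{Sturm}. The bridge is a \emph{normal form} for the syzygy windows on a flattened regular torus, which I would extract from the separation of variables of Section~\ref{sec:Euler}: after a lattice-preserving affine change of the angle coordinates $(\theta_1,\theta_2)$ identifying the torus with $\R^2/\Z^2$, the syzygy-$3$ window becomes a family of equally spaced horizontal ``graph-paper'' lines (empty in case P), the $1$- and $2$-windows become two families of vertical lines, and the Liouville flow becomes the straight-line flow $\theta_2 = m\,\theta_1 + \theta_2(0)$ with $m$ a fixed positive multiple of the rotation number $W=W(g,h)$, normalized so that, relative to the spacing of the $3$-lines and of the vertical lines, the induced return map across a vertical line is exactly $y\mapsto y+W$ as in \eqref{rotSeq}. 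In case L the two vertical families interleave, offset by half their common period; in case S only one of them meets the torus (which one depends on which center the satellite encircles); in case P there are no horizontal lines. This is exactly the set-up of the Example, with the $3$-window playing the role of ``$H$'' and the $1$- and $2$-windows that of ``$V$'', and the two attracting centers are precisely the points where windows of different types cross, which under the normal form become the lattice points that Definition~\ref{Sturmword} asks the line to avoid.

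Granting the normal form, the cases are short. In case P a trajectory meets only the two vertical families; being offset by a half-period it meets them alternately, so its syzygy sequence is $\ldots 121212\ldots$, and when $W=p/q$ it closes after $q$ windings in $\theta_1$, giving $(12)^q$. In case L, between two consecutive vertical crossings the trajectory meets exactly $n_k=\lfloor W+\{\theta_2(0)+(k-1)W\}\rfloor$ of the horizontal ($=3$) lines --- the content of \eqref{rotSeq} --- while consecutive vertical crossings alternate $1,2,1,2,\dots$ by the half-period offset; hence the sequence is $\dots 1\,3^{n_1}\,2\,3^{n_2}\,1\,3^{n_3}\,2\dots$ with $(n_k)$ the Sturmian sequence of rotation number $W$ and intercept $\theta_2(0)$ (Definition~\ref{Sturmword}), the doubling in the normal form being what makes the fundamental word have length $2(p+q)$. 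Case S is the same with a single vertical family, giving $\dots 1\,3^{n_1}\,1\,3^{n_2}\dots$ or $\dots 2\,3^{n_1}\,2\,3^{n_2}\dots$. In each case the orbit is collision-free iff the line misses the lattice, which is the standing hypothesis of Definition~\ref{Sturmword}; sliding the initial point around the torus realizes every intercept $\theta_2(0)\in\R/\Z$; and for the converse every $W$ in the stated range occurs, since $(g,h)\mapsto W(g,h)$ is continuous on each connected set of regular values, with boundary limits that I would read off from the action integrals of Section~\ref{sec:Euler} --- this is the source of Tables~\ref{WtypeL} and \ref{WtypeS}. In particular $0<W<\infty$ throughout, so the flow is transverse to every window, the Example applies verbatim, and Lemma~\ref{lem:rational} then yields the rational (periodic) case, i.e.\ Theorem~\ref{thm:PerSyz} as the announced corollary.

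The main obstacle is the normal form itself --- turning Figure~\ref{fig:windows} into a theorem. I would build it from Section~\ref{sec:Euler}: in the regularizing variables $\lambda,\nu$ the integrals $H$ and $G$ separate, so the flattened angles are affine images of the Abel-type angles of the two one-dimensional quadratures in $\lambda$ and in $\nu$; the collinear locus $y=0$ is cut out in these variables by a trigonometric equation together with, near each center, a Levi-Civita square-root branch, and under the local squaring map $\xi+i\eta\mapsto(\xi+i\eta)^2$ a line through a center becomes a cross of two orthogonal lines. This squaring --- a genuine double cover near each center --- is what simultaneously turns the three intervals of the $x$-axis into an orthogonal grid, forces the alternation $1,2,1,2,\dots$ in case L, and doubles the length of the fundamental word. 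Pinning down the remaining constants --- the exact offsets (the half-periods), the number of window lines of each type per fundamental domain, and the exact multiple of $W$ serving as the slope $m$ --- is the technical heart; once they are fixed, everything above is bookkeeping together with the Example and Lemma~\ref{lem:rational}.
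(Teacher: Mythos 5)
Your overall strategy coincides with the paper's: flatten the torus, identify the syzygy windows with a doubly periodic pattern of horizontal ($3$) and vertical ($1$, $2$) lines whose pairs are offset by half a period, rescale to the half-lattice, and invoke the Morse--Hedlund example of Section~\ref{Sturm} together with Lemma~\ref{lem:rational}; the case analysis, the length count $2(p+q)$, and the appeal to the period integrals for the range of $W$ are exactly Sections~\ref{guts} and~\ref{sec:Euler}. The gap is precisely where you locate it: the ``normal form'' is asserted rather than proved. The entire theorem hinges on the quantitative claim that the two window lines in each pair sit \emph{exactly} half a lattice unit apart in the angle coordinates --- this is what turns the rescaled picture into the integer-lattice graph paper of the Example, forces the strict alternation $1,2,1,2,\dots$ in cases L and P, and produces the factor $2$ in $2(p+q)$. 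Your proposed route to this fact, ``pinning down the constants'' in the Abel-type angle maps, is not carried out, and as the paper notes those maps are complicated elliptic-function expressions; an offset of anything other than $1/2$ would break the reduction to Definition~\ref{Sturmword}, so this cannot be waved through as bookkeeping.

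The missing idea is the symmetry argument of Section~\ref{halfaunit}, which makes the explicit angle maps unnecessary. The separated Hamiltonians admit the symplectic involutions $R_\lambda(\lambda,p_\lambda)=(-\lambda,-p_\lambda)$ and $R_\nu(\nu,p_\nu)=(\pi-\nu,-p_\nu)$, each preserving its Hamiltonian and hence commuting with its flow. Written in the angle coordinate of the corresponding circle, a map commuting with all time translations is itself a translation $\theta\mapsto\theta+p$, and a nontrivial involution forces $p=1/2$. Since each involution exchanges the two window points of a pair (e.g.\ $(0,\pm p_\lambda)$ for symbol $3$, and $(\pm\pi/2,\pm p_\nu)$ for symbols $1$ and $2$), the pairs are separated by exactly $1/2$, with no elliptic integral evaluated. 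If you insert this step (or an honest computation of the offsets), the remainder of your write-up is the paper's proof.
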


\begin{remark}  The rotation number  $W$ only depends on the regular values $g,h$ and not on the choice of torus
when $F^{-1} (g,h)$ consists of  more than one torus.  This ``coincidence'' is a consequence of the fact that the two-center problem
is integrable in elliptic functions, and moreover that the differentials that define the rotation numbers are of first kind.
\end{remark}



The main theorem has some simple consequences.

\begin{cor}
In any orbit on a regular torus of type L or S the number of consecutive 
3s is either $n$ or $n+1$ where $n$ is the integer part of $W$.
\end{cor}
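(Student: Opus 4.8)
The plan is to read this off directly from Theorem~\ref{thm:AllSyz} together with the defining property \eqref{rotSeq} of Sturmian sequences; no new dynamics is needed. First I would invoke Theorem~\ref{thm:AllSyz}: an orbit on a regular torus of type L has syzygy sequence $\dots 1\, 3^{n_1}\, 2\, 3^{n_2}\, 1\, 3^{n_3}\, 2 \dots$, and an orbit on a regular torus of type S has syzygy sequence $\dots 1\, 3^{n_1}\, 1\, 3^{n_2}\, 1 \dots$ or $\dots 2\, 3^{n_1}\, 2\, 3^{n_2}\, 2 \dots$, where in every case $\{n_i\}$ is the Sturmian sequence attached to the rotation number $W$ of that orbit's torus.

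Next I would observe that in each of these normal forms consecutive $3$-blocks are separated by exactly one symbol, and that separating symbol is a $1$ or a $2$, never a $3$. Hence the maximal runs of consecutive $3$'s appearing anywhere in the sequence are precisely the blocks $3^{n_i}$, so every maximal run of $3$'s has length exactly $n_i$ for some $i$. For periodic orbits the same holds across the wrap-around, since the fundamental word $1\,3^{n_1}\,2\,3^{n_2}\dots 3^{n_s}$ begins with a $1$ or a $2$ and therefore does not fuse with the trailing $3$-block $3^{n_s}$.

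Finally, by \eqref{rotSeq} (equivalently Definition~\ref{Sturmword}), every exponent $n_k$ equals either $\lfloor W\rfloor$ or $\lfloor W\rfloor + 1$; writing $n=\lfloor W\rfloor$ for the integer part of $W$ yields the claim. The only points requiring a word of care are the degenerate cases: when $W$ is an integer all $n_i=n$, so the value $n+1$ simply does not occur (the statement still holds as written), and when $0<W<1$ we have $n=0$, so the assertion reads that between two consecutive non-$3$ symbols there is either no $3$ or exactly one $3$. I expect no real obstacle here: the entire content is already packaged in Theorem~\ref{thm:AllSyz}, and this corollary is just the translation ``the length of a maximal run of $3$'s is an exponent of the Sturmian word, and Sturmian exponents take only the two consecutive values $\lfloor W\rfloor$ and $\lfloor W\rfloor+1$.''
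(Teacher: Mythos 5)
Your proposal is correct and is essentially the paper's own argument: the paper disposes of this corollary in one line (``follows directly from the definition of the Sturmian sequence''), and you have simply spelled out the same reading-off of the exponents $n_i$ from Theorem~\ref{thm:AllSyz} and equation~\eqref{rotSeq}, with careful attention to the wrap-around and degenerate cases.
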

\begin{proof}
This follows directly from the definition of the Sturmian sequence.
\end{proof}

\begin{cor}
In all cases symbol 1 and   2 never appear adjacent to each other (``no  1 or 2 stutters'').
Lemniscate orbits with $W < 1$ have no stutters, while for $W > 1$ they have 3-stutters.
Every Satellite orbit has at least one 3-stutter.
\end{cor}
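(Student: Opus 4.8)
The plan is to prove all three assertions by inspecting the three normal forms of a syzygy sequence listed in Theorem~\ref{thm:AllSyz} (equivalently Theorem~\ref{thm:PerSyz}), using only two facts already available: that each exponent $n_i$ of a type~L or type~S sequence equals $\lfloor W\rfloor$ or $\lfloor W\rfloor+1$ (the defining property \eqref{rotSeq} of the Sturmian sequence, Definition~\ref{Sturmword}), and the ranges of the rotation number recorded in Tables~\ref{WtypeL} and~\ref{WtypeS}; in particular that $W<1$ and $W>1$ both occur for type~L, while $W>1$ always holds for type~S. Throughout, by a ``$k$-stutter'' I mean the two-letter factor $kk$.

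For the ``no $1$- or $2$-stutter'' claim I would check each case. In the P case the sequence is $\ldots 121212\ldots$, whose only length-two factors are $12$ and $21$, so none of $11$, $22$, $33$ occurs. In the L case the $1$'s and $2$'s strictly alternate (Theorem~\ref{thm:PerSyz}) and consecutive outer symbols are separated by a (possibly empty) block $3^{n_i}$; hence any $1$ is immediately preceded and followed either by a $3$ or by the other outer symbol $2$, never by a $1$, and symmetrically for $2$ --- so no $11$ or $22$. In the S case we may take the all-$1$'s form $\ldots 13^{n_1}13^{n_2}1\ldots$ (the all-$2$'s case is identical under $1\leftrightarrow 2$); a factor $11$ would force some $n_i=0$, but $W>1$ for type~S gives $\lfloor W\rfloor\ge 1$, hence every $n_i\ge 1$, so no block $3^{n_i}$ is empty and no $11$ appears, while there is no $2$ at all, so no $22$. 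This settles the first assertion in all cases.

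For the Lemniscate dichotomy, suppose $0<W<1$. Then $\lfloor W\rfloor=0$, so each $n_i\in\{0,1\}$ and every block $3^{n_i}$ is empty or a single $3$; hence no $33$ occurs, and combined with the previous paragraph the sequence has no stutters whatsoever. Now suppose $W>1$, so $\lfloor W\rfloor\ge 1$. If $\lfloor W\rfloor\ge 2$ then every $n_i\ge 2$ and the first block $3^{n_1}$ already contains $33$. If $\lfloor W\rfloor=1$, i.e.\ $1<W<2$, then $W$ is not an integer, so $\{W\}>0$; since in a Sturmian sequence the longer block $3^{\lfloor W\rfloor+1}$ occurs with asymptotic frequency $\{W\}$ (the ratio of $3$'s to outer symbols tends to $W$, and an average block length of $W$ forces a fraction $\{W\}$ of blocks to have the larger length), the value $n_i=2$ does occur, producing a $33$. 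The same computation, now using only that $W>1$ for type~S, shows every Satellite sequence contains a $33$; this is the third assertion.

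The only nontrivial inputs beyond bookkeeping are the locations of the admissible rotation numbers --- that $W>1$ for type~S and that $W$ straddles $1$ for type~L --- which is exactly the content of Tables~\ref{WtypeL} and~\ref{WtypeS}, and the elementary sub-claim that a Sturmian sequence of slope strictly between $1$ and $2$ genuinely uses the longer block, i.e.\ that the larger block-length occurs with frequency $\{W\}>0$. I expect the range of $W$ for type~S (strict inequality $W>1$) to be the only place where one must invoke the earlier computations rather than pure combinatorics of the Sturmian words; in particular one should confirm that $W=1$ is never attained on a regular type~S torus, consistent with the ``missing periodic orbits'' discussion where the boundary sequence $3$ separates types L and S.
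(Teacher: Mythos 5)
Your proposal is correct and follows essentially the same route as the paper: a case check of the three normal forms from Theorem~\ref{thm:AllSyz}, using $n_i\in\{\lfloor W\rfloor,\lfloor W\rfloor+1\}$ together with the (numerically established) bound $W>1$ for type S and the straddling of $W=1$ for type L. The only cosmetic difference is that you justify the occurrence of the longer block $3^{\lfloor W\rfloor+1}$ for non-integer $W>1$ via the average-block-length argument (frequency $\{W\}>0$), whereas the paper phrases the same fact geometrically as the slope-$W$ line being forced to cross the symbol-3 window twice between consecutive outer-symbol crossings.
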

\begin{proof}
For type L the symbols 1 and 2 alternate, so there cannot be stutters of these symbols.
For type S there would be 1- or 2-stutters if any $n_i =0$, however we have numerically verified that 
$W > 1$ for type S, so by equation \eqref{rotSeq} we have $n_i >0$.
At least one 3-stutter, i.e.\ one $n_i=2$, must occur when $W > 1$, because then the line in figure~\ref{fig:windows}
with slope $>1$ must intersect the vertical lines of symbol 3 at least once. 
\end{proof}

\section{Guts of the proof.} 
\label{guts} 

\begin{figure}
\includegraphics[width=7cm]{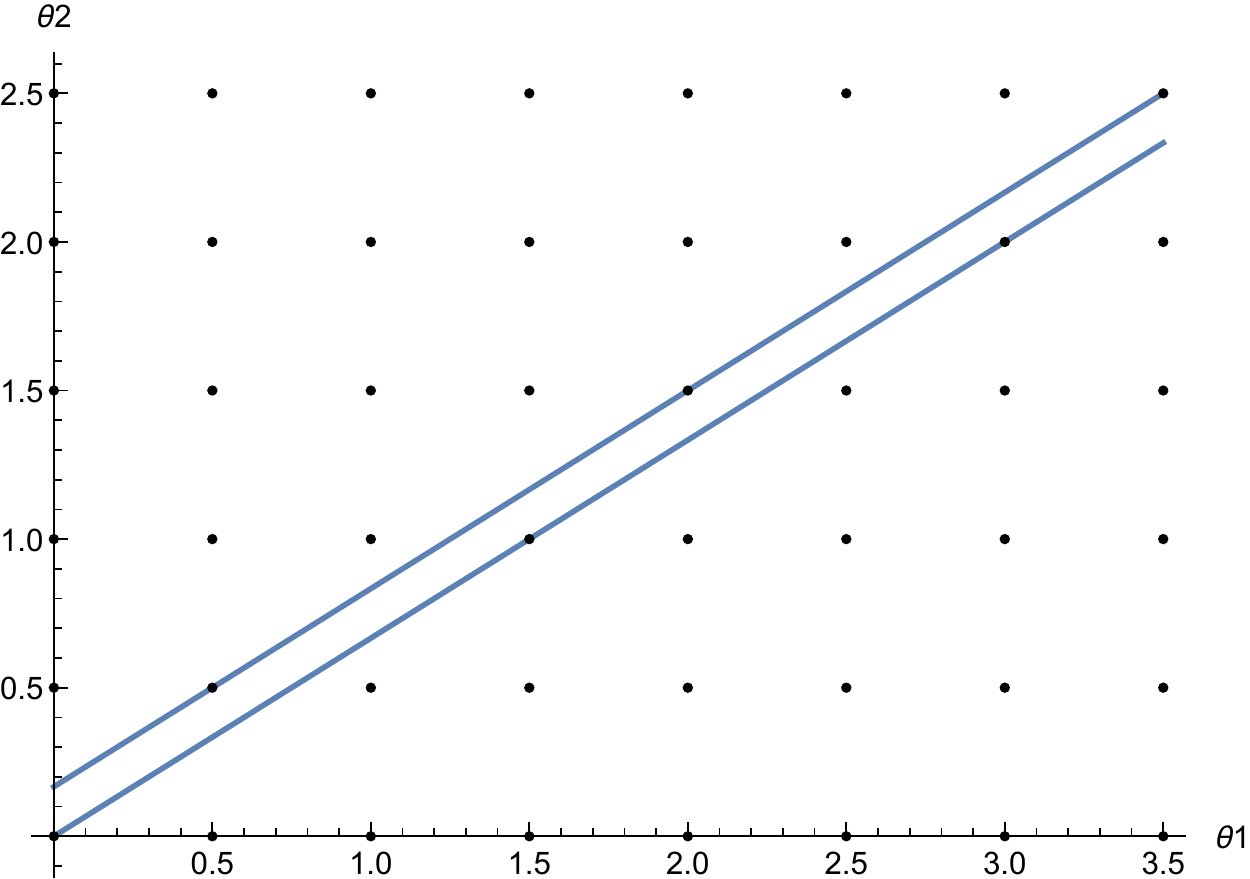}
\caption{Two solution curves with $W = 2/3$ bounding a region of orbits without collisions. 
Half-lattice points correspond to collisions.} \label{fig:perla}
\end{figure}

Each regularized  Liouville torus $\T = \T(g,h)$ maps to configuration space by composing its inclusion  into
regularized $\lambda, \nu, p_{\lambda}, p_{\nu}$-phase space and   projecting the regularized phase space onto the usual
$xy$ configuration space.   The windows on $\T$ are the inverse image of the collinear locus (the 
$x$-axis) with respect to this map.  We will show in sections \ref{sec:Euler} and \ref{halfaunit} that these windows are a   finite  collection of horizontal and vertical lines,
positioned as indicated in figure~\ref{fig:windows} according to the cases S, L or P.
Here ``horizontal'' or ``vertical'' mean  that when we coordinatize $\T$  in standard angle variables $\theta_1 \theta_2$  of ``action-angle'' 
as  $\R^2/\Z^2$, then these lines are parallel to the $\theta_1$ or the $\theta_2$ axes.
Lifted to the universal cover, then we get almost exactly the `graph paper picture' used for writing Sturmian words. 
In the S and L  cases we have 4 lines in all, a horizontal pair and a vertical pair, 
see figure~\ref{fig:windows}.  
The parallel lines in each pair are separated by 1/2 a lattice unit.  In the P case we have only the two vertical lines
separated by 1/2 a unit.  
The intersections of the lines represent collisions of our test particle with the appropriate primary.
This pattern is represented on the plane as a doubly periodic pattern of parallel lines.

On  a fixed torus $\T = \T(g,h)$ the solution curves are a family of parallel  straight lines $y = W x + b$  with $W = W (\T)$ constant,
and $b$ varying.  Here we have changed variables so $x = \theta_1$, $y = \theta_2$
so as to conform to the discusion  of subsection \ref{Sturm}.  The torus supports periodic solutions if and only if $W$ is rational.  
A finite number of these solutions will have collisions.  
Those remaining have a syzygy sequence.

 \begin{figure}
 \includegraphics[width=12cm]{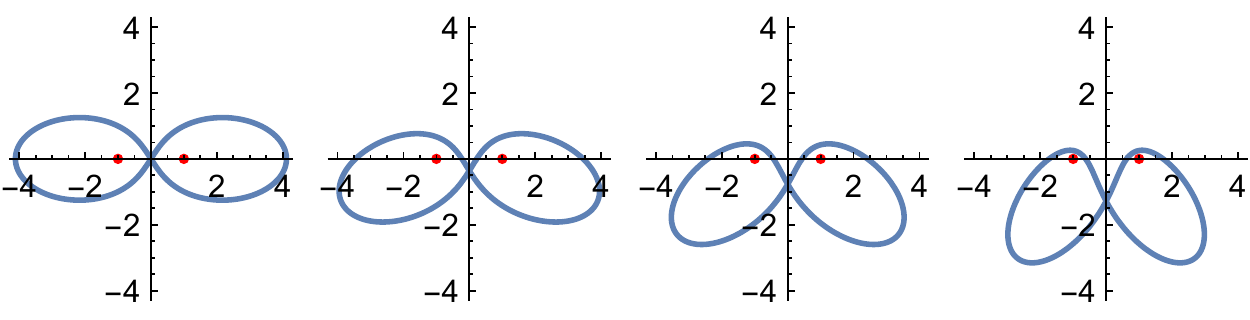}
\includegraphics[width=12cm]{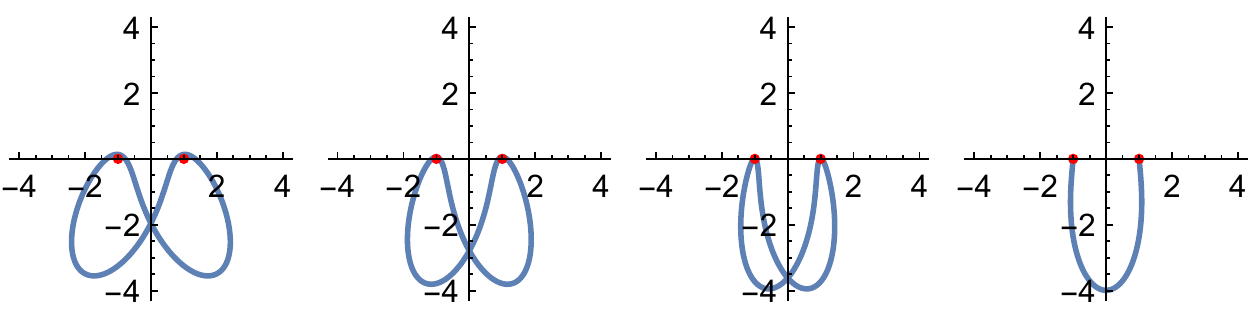}
 \caption{Orbits of type L with $W = 1$ and syzygy sequence $1323$ for $h=-0.23$ and $m_1 = m_2 = 1/2$ for 
 a sequence of initial conditions with different $\nu_0 = i \pi /14 $, $i=0, \dots, 7$ 
 and the same $\lambda_0 = 0$  on the same torus.  For $\nu_0 = 0$ the orbit 
 has additional discrete symmetry, while for $\nu_0 = \pi/2$ it is a collision-collision orbit.
 } \label{fig:W11icodeform}
 \end{figure}

The following is essentially a restatement of theorem 2: 
\begin{lem}  \label{lem:combi}
On a fixed regular rational torus $\T =\T(g,h)$ all collision-free solutions have the same periodic syzygy sequence. 
This sequence is as described in theorem~\ref{thm:AllSyz}, where $W = W(g,h)$ is the rotation number of $\T$. 
If   $W = p/q$ then the length of this common syzygy  sequence is  $2(p+q)$ in the S and L cases, and $2q$ in the $P$ case. 
\end{lem}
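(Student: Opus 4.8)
The plan is to transport the solution curves and windows on the fixed torus into the ``graph paper'' model of Section~\ref{Sturm} by one rescaling, and then simply quote Lemma~\ref{lem:rational}. Fix a regular torus $\T=\T(g,h)$ of type S, L or P, with rational rotation number $W=p/q$ in lowest terms, flattened so that $\T\cong\R^2/\Z^2$ with angle coordinates $(\theta_1,\theta_2)$. By the window description set up above (and proved in Sections~\ref{sec:Euler} and \ref{halfaunit}, cf.\ Figure~\ref{fig:windows}), the syzygy windows on $\T$ are, in the S and L cases, the two vertical lines $\theta_1\in\{0,\tfrac12\}$ together with the two horizontal lines $\theta_2\in\{0,\tfrac12\}$, carrying the labels of Figure~\ref{fig:windows}: in the L case the vertical pair is labelled $1,2$ and the horizontal pair $3,3$; in the S case the vertical pair is labelled $1,1$ (type S1) or $2,2$ (type S2) and the horizontal pair $3,3$; in the P case only the vertical pair $1,2$ is present. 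The collisions are exactly the (four, or zero) intersection points of these lines, all at half-integer coordinates, so only finitely many values of $b$ give a collision; the solution curves are the lines $\theta_2=W\theta_1+b$, each of which is closed since $W$ is rational, hence each collision-free one carries a periodic syzygy sequence.

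Now substitute $u=2\theta_1,\ v=2\theta_2$. This presents $\T$ as $\R^2/(2\Z)^2$, carries the windows onto the integer grid lines $u\in\Z$ and $v\in\Z$, and carries a solution onto a line $v=Wu+c$ with $c=2b$ — precisely the configuration of the Example in Section~\ref{Sturm}. By that Example the ordered list of grid lines met by a collision-free such line is the Sturmian sequence of slope $W$ and intercept $c$ (Definition~\ref{Sturmword}); since $W$ is rational, Lemma~\ref{lem:rational} tells us that this sequence, read as a periodic word, is independent of $c$. Restoring the labels $1,2,3$ for the vertical and horizontal grid lines according to Figure~\ref{fig:windows} turns this common word into exactly the syzygy patterns of Theorem~\ref{thm:AllSyz} (hence~\ref{thm:PerSyz}), with block exponents $n_i$ those of the Sturmian sequence for $W$; this is precisely the statement that all collision-free solutions on $\T$ carry the same periodic syzygy sequence. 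For the length: one period of the closed orbit corresponds to $\theta_1$ increasing by $q$ (equivalently $u$ by $2q$), during which $\theta_1$ crosses the half-integer lattice $2q$ times and $\theta_2$ crosses it $2p$ times, so the sequence has length $2q+2p=2(p+q)$ in the S and L cases and $2q$ in the P case, as claimed.

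Granting the window picture — the real content, supplied by the two later sections — almost everything above is bookkeeping, and the one step that is not purely formal is the relabelling in the L case, where the vertical pair carries the two distinct letters $1,2$ alternately. There one must check that passing from one collision-free interval of intercepts to another (effected, as in the proof of Lemma~\ref{lem:rational}, by sliding along the orbit) carries the labelled word to a cyclic shift of itself rather than to its $1\!\leftrightarrow\!2$ relabelling; this is exactly where the precise half-integer placement of Figure~\ref{fig:windows} (together with the symmetry of the $\nu$-motion responsible for the $\tfrac12$-spacing of the vertical pair) is used. In the P case the point is automatic because $(12)^q$ is invariant under $1\!\leftrightarrow\!2$ up to a shift, and in the S case because both lines of the vertical pair carry the same letter; so the Lemniscate case is the only place the argument requires more than a direct reading of the graph-paper model.
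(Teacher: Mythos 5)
Your argument is correct and is essentially the paper's own proof: the paper likewise reduces to the Morse--Hedlund graph-paper picture by passing to the half-unit lattice $((1/2)\Z)^2$ (your substitution $u=2\theta_1$, $v=2\theta_2$ is the same move), invokes Lemma~\ref{lem:rational} for independence of the intercept, and gets the length $2(p+q)$ by counting two horizontal lines per vertical unit and two vertical lines per horizontal unit. The one point you flag as non-formal --- that in the L case the relabelling must produce a cyclic shift rather than a $1\leftrightarrow 2$ swap --- is exactly the step the paper also compresses into ``keeping track of the labelling,'' so you are not missing anything the paper supplies.
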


\begin{proof} 
Let $W = p/q$ be given, and the labelled set of  vertical and (possibly) horizontal lines be given.
Each solution is a straight line of slope $p/q$, see figure~\ref{fig:perla}

\textit{P case.}  There are no horizontal lines,  so no collisions.  Vertical  lines  marked with 1 and 2 alternate. 
The only possible sequence  is  $(12)^s$ for some power $s$.  
Each straight  line cuts across $q$ basic units in the horizontal direction before closing, so that we have $s = q$.  

\textit{S and L cases.}
Fix attention on one of the 4 collision points: these being the intersection of the vertical and horizontal lines.
Shift the fundamental domain describing the torus so that this point is at the origin.  Collision solutions are those
passing through a lattice point and we  throw these out.  Because the line pairs 
are half a unit a part, it makes sense to magnify the lattice. Consider the new lattice  $((1/2) \Z) ^2$
in the plane,   the torus's universal cover, whose points represent collisions.    We are  now in a situation identical to that 
of subsection \ref{Sturm}, except that the lines have different labellings!  The independence of the sequence on
the initial condition of the torus follows from lemma~\ref{lem:rational} of subsection \ref{Sturm}. Keeping track of the labelling 
leads to the sequences described in theorem 2. 
Everything has already been  done by  Morse and Hedlund, as described in section \ref{Sturm}, with the exception of the counting of sequence length. 
For this count, observe that   there are 2 horizontal lines per vertical  unit
and   2 vertical lines per horizontal unit.  Every  solution must traverse $q$ horizontal units
and $p$ vertical units and so crosses $2q + 2p$ lines in all.

\end{proof}
  
\begin{remark} The fact that the symbol sequences are the same for solutions on a fixed rational torus does  not  mean that these solutions are the same!  
Indeed, they sweep out a torus-full of solutions. 
In figure~\ref{fig:W11icodeform} we show the deformation of the type L orbit with $W = 1/1$ when the 
initial condition on the torus is changed. The orbit undergoes a deformation from a symmetric state
all the way to a collision orbit.  
Note that for a generic non-integrable perturbation one expects that almost all of these periodic 
orbits will be  destroyed. 
 \end{remark}
 
\section{Collision orbits}

 \begin{figure}
\includegraphics[width=14cm]{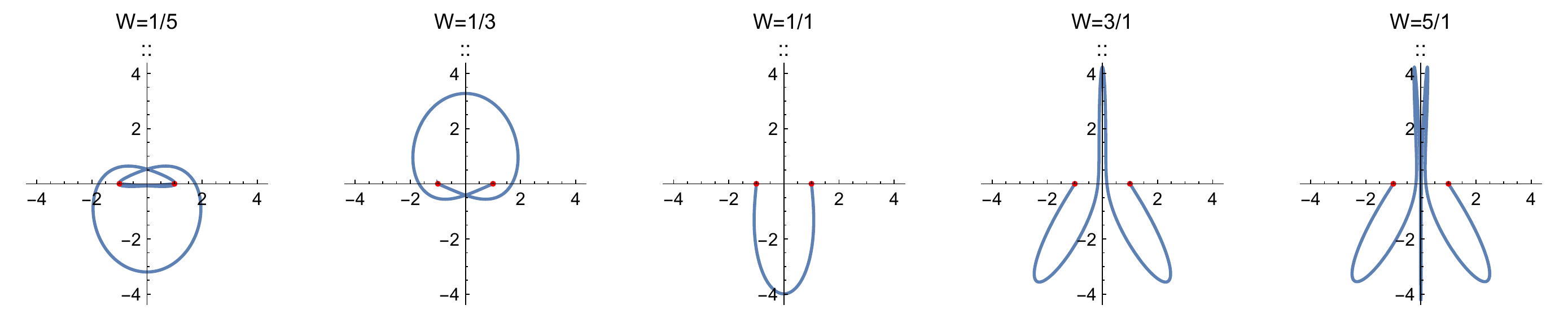}
\includegraphics[width=14cm]{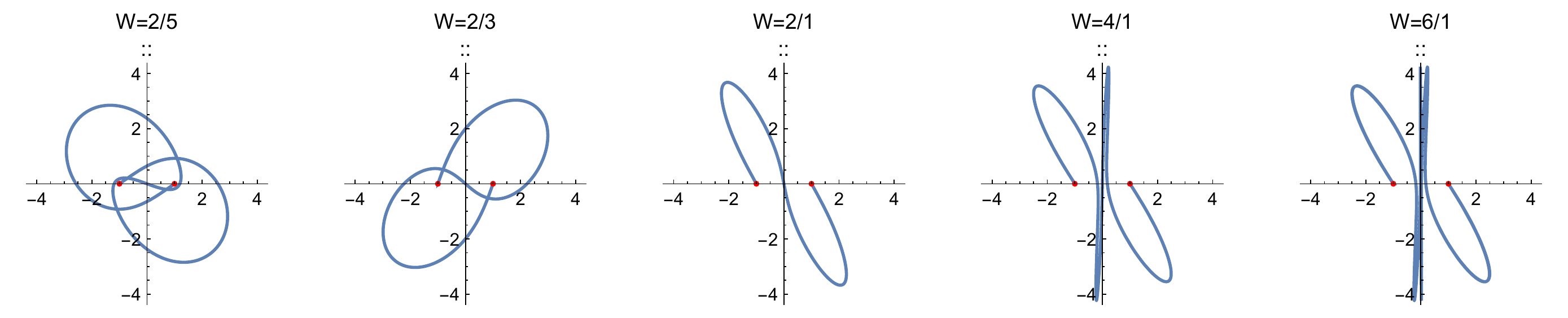}
\includegraphics[width=14cm]{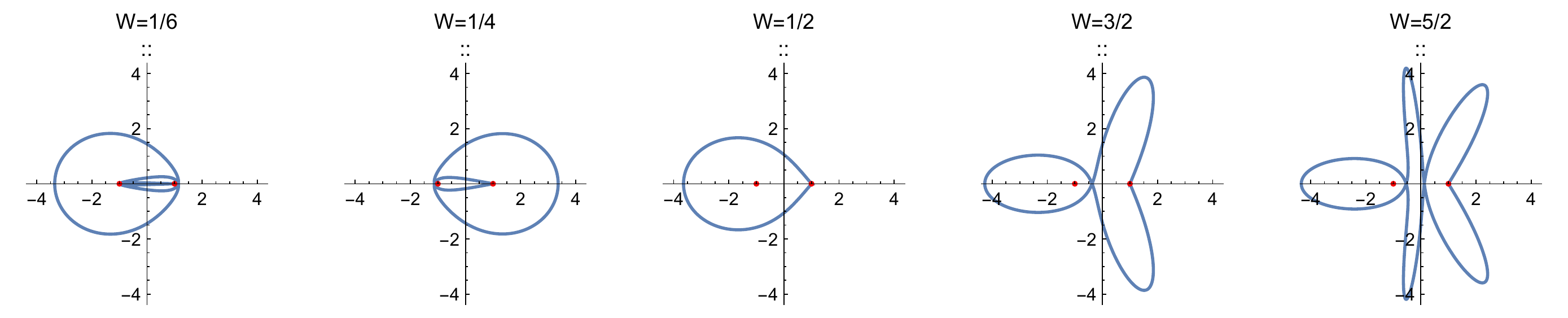}
\includegraphics[width=8.4cm]{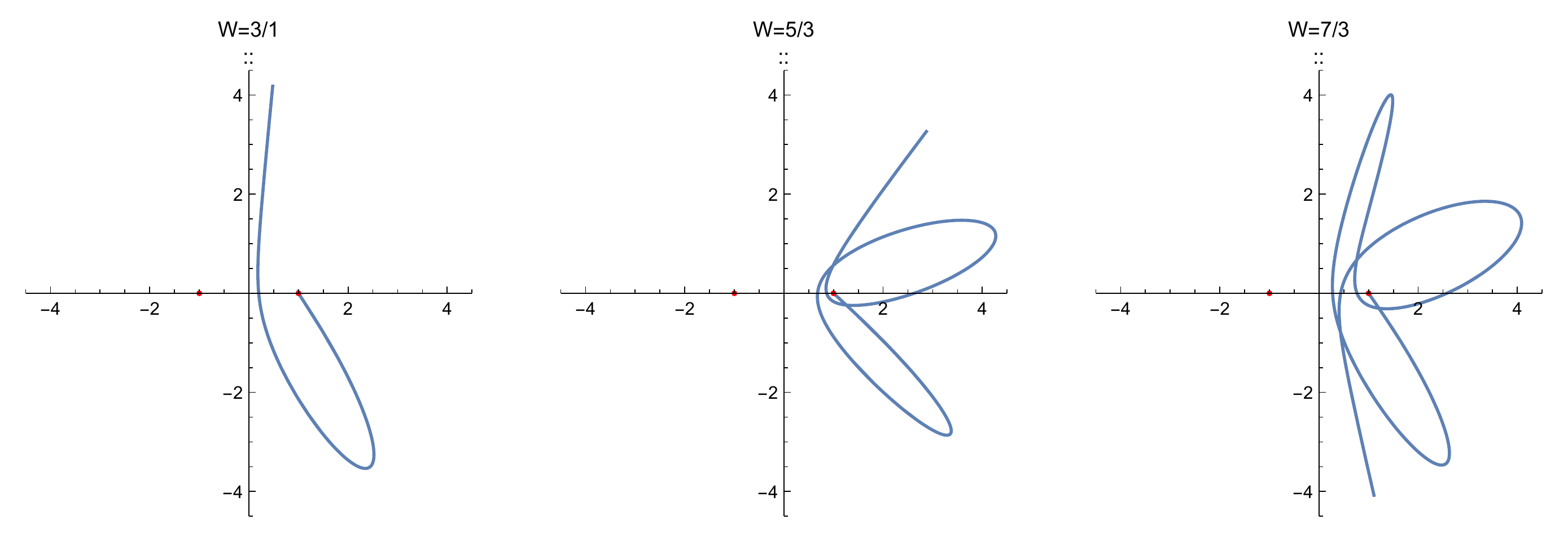}
\caption{Collision orbits of Lemniscate type in the $(x,y)$ plane for the equal mass case $m_1 = m_2 = 1/2$, $d = 1$, $h = -0.23$
with various rotation numbers $W$. To the right more oscillations along the $y$-axis are added, to 
the left more oscillations along the $x$-axis are added, keeping the parity of $W$ the same along each row.
Collision orbits with $W = odd/even$ visit the same collision twice.
The last row shows some brake-collision orbits of Satellite type (same parameters).}  \label{fig:CollisionOrbits}
\end{figure}

In figure~\ref{fig:CollisionOrbits} we show some collision orbits, which are found on tori with 
rational $W$ for initial conditions with $\lambda = 0$ and $\nu = \pm \pi/2$, as we now show.

\begin{cor}
Orbits that connect two collisions lie on a regular torus with rational rotation number $W$.
\end{cor}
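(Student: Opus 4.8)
The plan is to read the conclusion off the flattened‑torus picture of Section~\ref{guts}. First I would assume (and justify at the very end) that the given collision‑to‑collision orbit lies on a regular torus $\T = \T(g,h)$, and lift it to the universal cover $\R^2$ of $\T$. By the discussion preceding Lemma~\ref{lem:combi}, the collinear locus intersected with $\T$ and flattened is a finite union of horizontal and vertical lines, occurring in pairs separated by half a lattice unit, and the collisions of the test particle with a center are precisely the points where a horizontal line meets a vertical line. After shifting the fundamental domain so that one such intersection sits at the origin, as in the proof of Lemma~\ref{lem:combi}, the collision points are exactly the lattice $((1/2)\Z)^2 \subset \R^2$ (in the P case there are no horizontal lines, hence no collisions, so the orbit is automatically of type S or L). The lifted orbit is a straight line of slope $W = W(g,h)$, and by hypothesis it meets at least two distinct points $P \neq Q$ of $((1/2)\Z)^2$.

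Next I would conclude rationality. The vector $Q - P$ is nonzero with both coordinates in $(1/2)\Z$. If its horizontal component is nonzero, then $W$ is the quotient of its vertical by its horizontal component, a ratio of half‑integers, hence rational; and then, by Section~\ref{Sturm}, every orbit on $\T$ is periodic, consistent with Figure~\ref{fig:CollisionOrbits}. The only alternative is that $Q-P$ is vertical, i.e.\ the orbit runs along a vertical window line and the test particle is collinear for all time; such orbits are the collinear ones, which carry no syzygy sequence and are excluded from the discussion. This settles the rational‑$W$ claim.

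It remains to justify that the orbit lies on a regular torus, and this is the step I expect to be the real obstacle. After regularization both centers are ordinary points of phase space, so a collision‑to‑collision orbit is a genuine orbit segment whose two endpoints lie on a common level set $F^{-1}(g,h)$ of the integral map. If $(g,h)$ were critical, the orbit would belong to one of the five critical families listed above (Figure~\ref{fig:EM}): three of these are collinear orbits on $\{y=0\}$, one is the orbit with syzygy sequence $3$ lying on $\{x=0\}$ in the equal‑mass case, and one is the outer orbit with sequence $(12)^{\infty}$ encircling both centers. The last two never reach a center ($x\equiv 0\neq\pm d$ for the first, and the encircling orbit avoids $\Delta$), while the collinear ones are exactly the permanently‑collinear orbits already excluded. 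Hence $(g,h)$ is a regular value, $F^{-1}(g,h)$ is a regular torus, and combining with the previous paragraph the orbit lies on a regular torus with rational $W$. The crux of the whole argument is therefore the completeness of the list of critical values of $F$ encoded in the bifurcation diagram of Figure~\ref{fig:EM}; granting that, everything else is immediate from the half‑lattice picture.
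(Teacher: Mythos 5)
Your core argument is exactly the paper's: collisions are the points of the half-integer lattice $((1/2)\Z)^2$ in the covering plane of the flattened torus, and a line joining two such points has rational slope. The extra scaffolding you add (ruling out the vertical degenerate case and checking that critical level sets contain no collision-to-collision orbits other than the excluded collinear ones) goes beyond the paper's two-sentence proof but is consistent with it, so this is essentially the same approach, just more carefully justified.
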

\begin{proof}
Collision points are half integer lattice points in the covering plane of the torus. 
Any line connecting such lattice points has rational slope.
\end{proof}

\begin{cor}
Orbits that have a single collision lie on a regular torus with irrational rotation number $W$.
\end{cor}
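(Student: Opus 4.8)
The plan is to prove the contrapositive, using exactly the book-keeping set up for the previous corollary. On a regular torus of type S or L a collision orbit is a straight line of slope $W$ in the universal cover $\R^2$, and after translating the fundamental domain so that one collision sits at the origin the collision set is precisely the half-integer lattice $((1/2)\Z)^2$ (this is the picture established in the discussion preceding Lemma~\ref{lem:combi}; type P carries no collisions at all, having no horizontal windows). I would then show that a line of \emph{rational} slope through a half-lattice point must pass through a second half-lattice point that is distinct on the torus; such an orbit therefore has at least two collisions, so an orbit with a single collision must have irrational $W$.

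The number-theoretic core is short. Suppose $W = p/q$ in lowest terms and let $P \in ((1/2)\Z)^2$ be a collision lying on the orbit. First I would identify the half-lattice points on the line through $P$ of slope $p/q$: writing such a point as $P + \tfrac12(s,t)$ with $s,t\in\Z$ and imposing collinearity gives $p s = q t$, and since $\gcd(p,q)=1$ this forces $(s,t)=m(q,p)$ for $m\in\Z$, so the half-lattice points on the line are exactly $P + m\,(q/2,\,p/2)$. In particular the next collision encountered along the orbit is $P' = P + (q/2,\,p/2)$, reached in finite regularized time. Finally I would note that $P'$ is a genuinely different point on the flattened torus, because $(q/2,p/2)\in\Z^2$ would require $p$ and $q$ to be both even, which is impossible. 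Hence the orbit connects $P$ to the distinct collision $P'$: it is a collision--collision orbit, contradicting the hypothesis of a single collision. This proves $W$ is irrational. (Running the same computation backwards gives the converse implication: any line through two half-lattice points has slope a ratio of integers, hence rational, so an irrational-slope orbit meets at most one collision --- which is why, taken together with the previous corollary, the single-collision orbits are exactly the orbits on regular tori with irrational $W$.)

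I do not foresee a genuine obstacle. The only input needed from the earlier sections is the identification, already used in Lemma~\ref{lem:combi} and in the previous corollary, of the collision locus on a flattened regular torus with $((1/2)\Z)^2$ in the cover, together with the fact that only types S and L carry collisions. The ``lies on a regular torus'' clause is not separate content: as throughout the paper we are speaking of orbits that carry a syzygy sequence, and the collision orbits that fail to live on regular tori are the collinear ones, which have no syzygy symbol at all (as noted after Theorem~\ref{thm:PerSyz}). The one point worth a remark in the write-up is that the argument never uses periodicity of the orbit: once $W$ is rational, the second collision is produced directly in the universal cover from $P$ and the slope.
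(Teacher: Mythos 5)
Your argument is correct and is essentially the paper's own proof: the paper simply observes that a rational-slope line through one half-integer lattice point must hit another such point (in backward time), so only irrational slopes permit exactly one collision. Your version just makes the arithmetic explicit (the collisions along the line are $P + m(q/2,p/2)$) and adds the unneeded-but-harmless check that the second collision is a distinct point of the torus.
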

\begin{proof}
A line with rational slope that hits a half integer lattice collision point in forward time,
will also hit another such point in backward time. Hence only lines with irrational slope 
can hit one lattice point (in either forward or backward time).
\end{proof}

\begin{cor}
Regular orbits that keep a finite distance from collisions lie on a regular torus with rational rotation number $W$ 
(unless in the P family, which has no collisions).
\end{cor}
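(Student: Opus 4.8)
The plan is to argue by contraposition, using the classical fact that a linear flow of irrational slope on the $2$-torus has dense orbits. Since the $P$ family is excluded by hypothesis, I may assume the orbit lies on a regular torus $\T = \T(g,h)$ of type $S$ or $L$. As recalled in Section~\ref{guts}, the collision locus on the flattened torus is the set of intersection points of the two horizontal and two vertical window lines; lifted to the universal cover these are exactly the half-integer lattice points $((1/2)\Z)^2$, so on $\T$ itself there are only finitely many collision points, say $P_1,\dots,P_r$.

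First I would record that the map $\pi\colon\T\to\R^2\setminus\Delta$ obtained by including $\T$ into regularized phase space and then projecting to configuration space is continuous, and that $\pi(P_j)$ corresponds to an actual collision for each $j$. Hence, given $\varepsilon>0$ there is $\delta>0$ such that any point of $\T$ within distance $\delta$ of some $P_j$ has $\pi$-image within distance $\varepsilon$ of $\Delta$. Contrapositively, if the orbit, viewed in configuration space, stays at distance at least $\varepsilon_0>0$ from $\Delta$, then the corresponding straight-line orbit on $\T$ stays at distance at least $\delta_0>0$ from the finite set $\{P_1,\dots,P_r\}$. (Note also that an orbit keeping a positive distance from $\Delta$ never reaches a collision, hence is complete and bi-infinite, so that speaking of its closure in $\T$ is legitimate.)

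Now suppose, for contradiction, that $W=W(g,h)$ is irrational. As in Section~\ref{guts}, in the standard angles $\theta_1,\theta_2$ the orbit on $\T$ is the line $\theta(t)=\theta(0)+(\omega_1,\omega_2)\,t \bmod \Z^2$ of slope $W=\omega_2/\omega_1$ irrational, so by Kronecker's theorem (equivalently, Weyl equidistribution) the closure of this orbit is all of $\T$. In particular the orbit comes arbitrarily close to $P_1$, contradicting the previous paragraph. Therefore $W$ must be rational, as claimed.

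The argument is short; the only point requiring a little care is the comparison between distance in configuration space and distance on the flattened torus, but this is needed only in the easy direction — continuity of $\pi$ — so there is no genuine obstacle.
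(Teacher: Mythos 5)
Your argument is correct and is essentially the paper's own proof: the paper likewise observes that a line of irrational slope comes arbitrarily close to the half-integer lattice (collision) points, so only rational slopes can keep a positive distance from them. The extra care you take in relating distance in configuration space to distance on the flattened torus via continuity of the projection is a detail the paper leaves implicit, but it does not change the route.
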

\begin{proof}
A line with irrational slope will come arbitrarily close to a half integer lattice point. 
Hence the only lines that keep a finite distance have rational slope, 
unless they are collision-collision orbits, as described in the previous corollary.
\end{proof}

\begin{cor}
Any regular torus of type S or type L with rational rotation number has exactly two collision-collision orbits.
\end{cor}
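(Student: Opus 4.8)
The plan is to reduce the statement to an elementary count of lines on the universal cover of the torus, in exactly the spirit of Section~\ref{guts}. First I would recall the set-up from Lemma~\ref{lem:combi}: a flattened regular torus $\T$ of type S or L is $\R^2/\Z^2$ in angle variables $(\theta_1,\theta_2)$, the collinear windows are a pair of horizontal and a pair of vertical lines with the two members of each pair half a lattice unit apart, and the four intersection points of these lines are the collisions (see Figure~\ref{fig:windows}). Translating the torus so that one of these points sits at the origin, the full set of collisions lifts to the half-integer lattice $\Lambda:=\tfrac12\Z^2\subset\R^2$, which contains the deck lattice $\Z^2$ of $\T$ as an index-four sublattice; thus there are exactly four collision points on $\T$, namely the cosets $\Lambda/\Z^2$. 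A collision--collision orbit on $\T$ is, by definition, the image of a straight line of slope $W$ meeting $\Lambda$ in at least two points; note that for rational $W$ meeting $\Lambda$ in one point already forces a second one, since a line of slope $p/q$ through $(a/2,c/2)\in\Lambda$ meets $\Lambda$ exactly in the points $\bigl((a+qk)/2,(c+pk)/2\bigr)$, $k\in\Z$ (this is the one-line computation already used in the Corollary on single-collision orbits).

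Next I would carry out the count. Write $W=p/q$ in lowest terms, $p,q\ge 1$. A line $y=(p/q)x+b$ through $(a/2,c/2)\in\Lambda$ has intercept $b=(cq-pa)/(2q)$, and since $\gcd(p,q)=1$ the quantity $cq-pa$ runs over all of $\Z$ as $(a,c)$ runs over $\Z^2$; hence the intercepts of all collision-passing lines form precisely the set $\tfrac1{2q}\Z$. On the other hand, $y=(p/q)x+b$ and $y=(p/q)x+b'$ descend to the same curve on $\T=\R^2/\Z^2$ if and only if $b'-b\in\{\,n-(p/q)m:m,n\in\Z\,\}=\tfrac1q\Z$, again using $\gcd(p,q)=1$. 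Therefore the collision--collision orbits on $\T$ are in bijection with the quotient group $\tfrac1{2q}\Z\big/\tfrac1q\Z$, which has order two (the two classes being represented by $b=0$ and $b=\tfrac1{2q}$, which are genuinely distinct since $\tfrac1{2q}\notin\tfrac1q\Z$). This yields exactly two such orbits.

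Finally, as a sanity check I would record which collisions each orbit visits. By the description of $\Lambda\cap(\text{line})$ above, a given collision--collision orbit visits exactly two of the four collision cosets of $\T$, and one checks that the four collision points split into two pairs according to the parities of $p$ and $q$, each pair joined by one of the two orbits; in particular each of the two orbits connects two \emph{distinct} collision points on $\T$, even though, depending on the masses and on whether we are in case S or L, the two points may project to the same physical collision in configuration space (this is the ``$W=\mathrm{odd}/\mathrm{even}$'' phenomenon of Figure~\ref{fig:CollisionOrbits}). The only real subtlety, and the step I would be most careful about, is the bookkeeping among the three lattices in play --- the deck lattice $\Z^2$ of the flattened torus, the collision half-lattice $\tfrac12\Z^2$, and the intercept lattice $\tfrac1{2q}\Z$ associated to the slope $p/q$; everything else is the Morse--Hedlund graph-paper picture already established. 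I would also flag explicitly that rationality of $W$ is used (it is what guarantees that one collision forces a second), which is why the P family and the irrational tori are correctly outside the scope of this corollary.
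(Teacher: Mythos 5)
Your proof is correct and follows essentially the same route as the paper's: the paper's own argument also reduces to counting the parallel lines of slope $p/q$ through the half-integer collision lattice modulo the deck lattice $\Z^2$, reading the answer ``two'' off the strip decomposition in figure~\ref{fig:perla} via lemma~\ref{lem:combi}. Your intercept computation ($\tfrac1{2q}\Z$ modulo $\tfrac1q\Z$ has order two) simply makes explicit the arithmetic that the paper delegates to the figure.
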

\begin{proof}
This follows from lemma~\ref{lem:combi}, the fundamental region covered by 
two copies of the strip shown in figure~\ref{fig:perla} has exactly two collision-collision
orbits, corresponding to the two lines shown in the figure.
\end{proof}

\section{Euler's Problem of Two Fixed Centers}
\label{sec:Euler}

\begin{figure}
\centering{ 
\includegraphics[width=4.5cm]{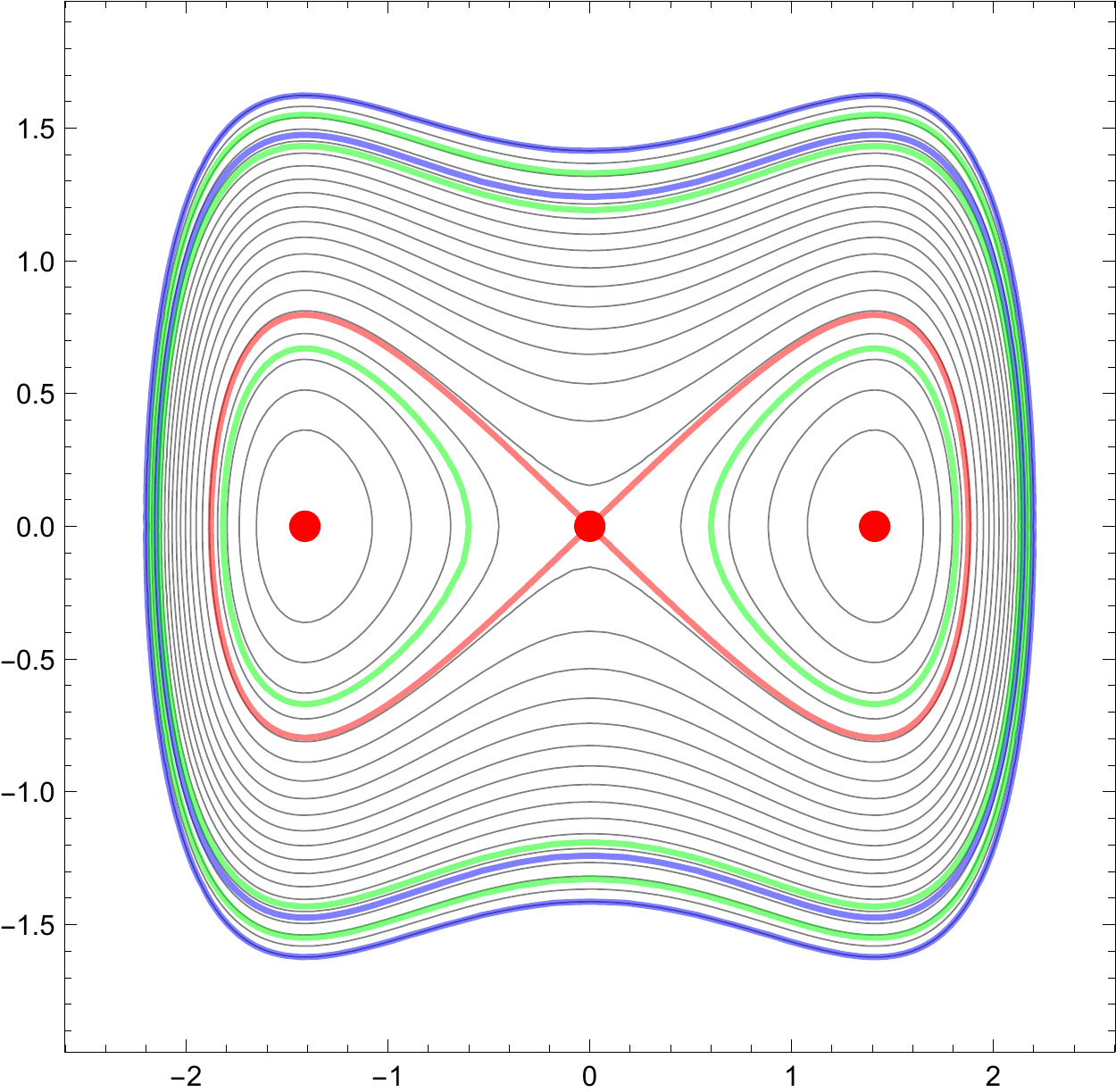}
\includegraphics[width=4.5cm]{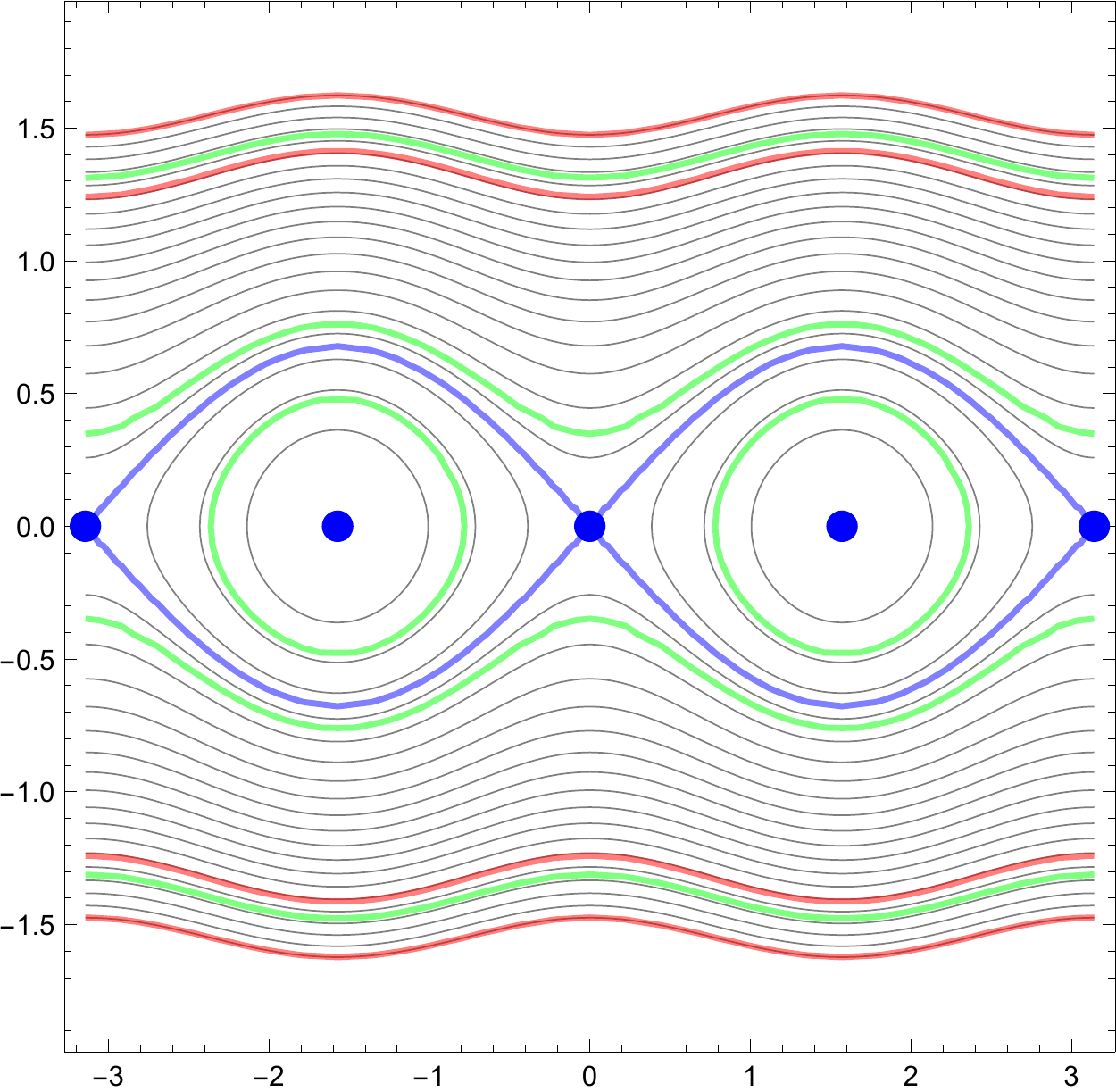}
} \\
\centering{ 
\includegraphics[width=4.5cm]{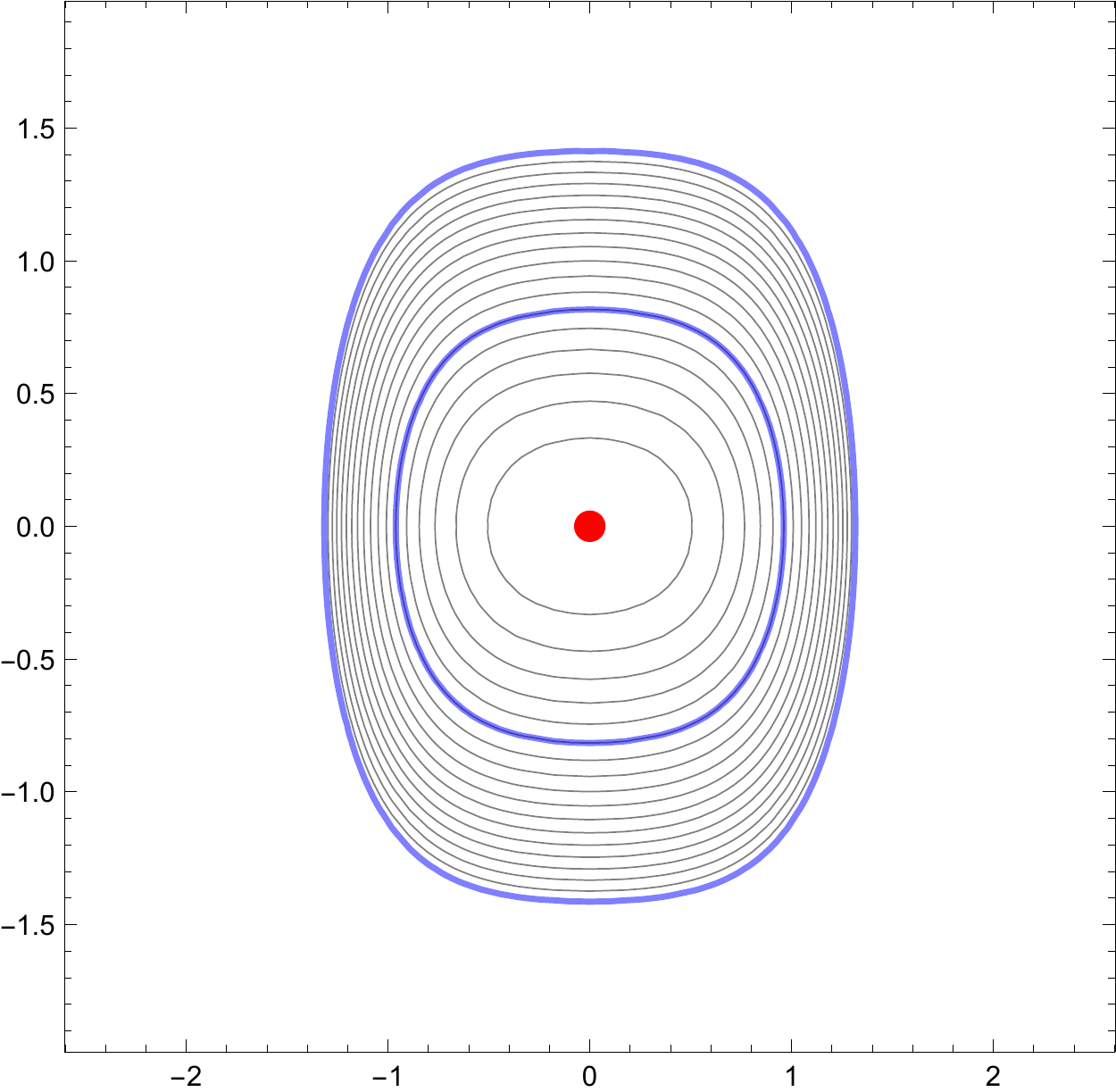}
\includegraphics[width=4.5cm]{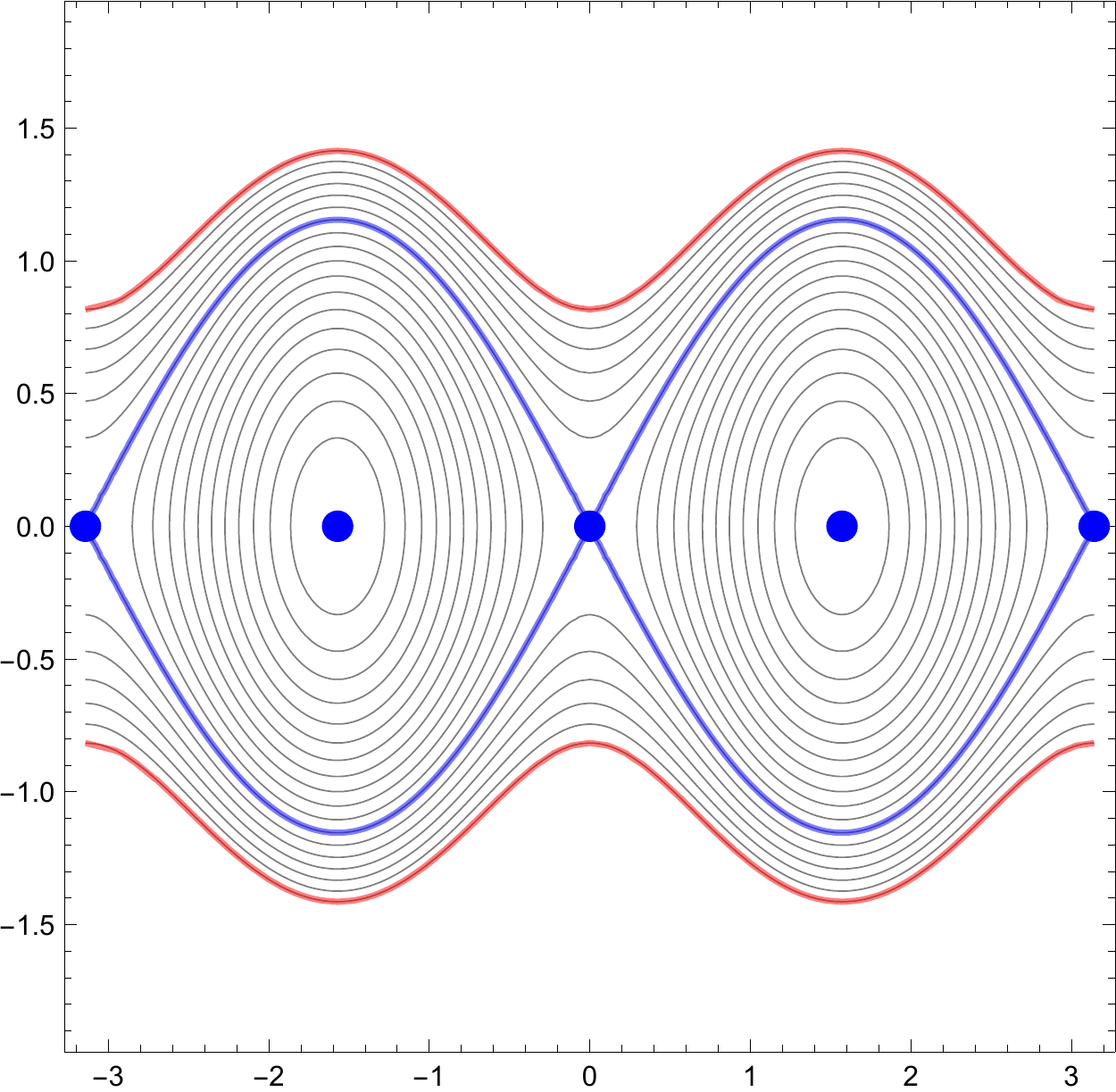}
} \\
\centering{ 
\includegraphics[width=4.5cm]{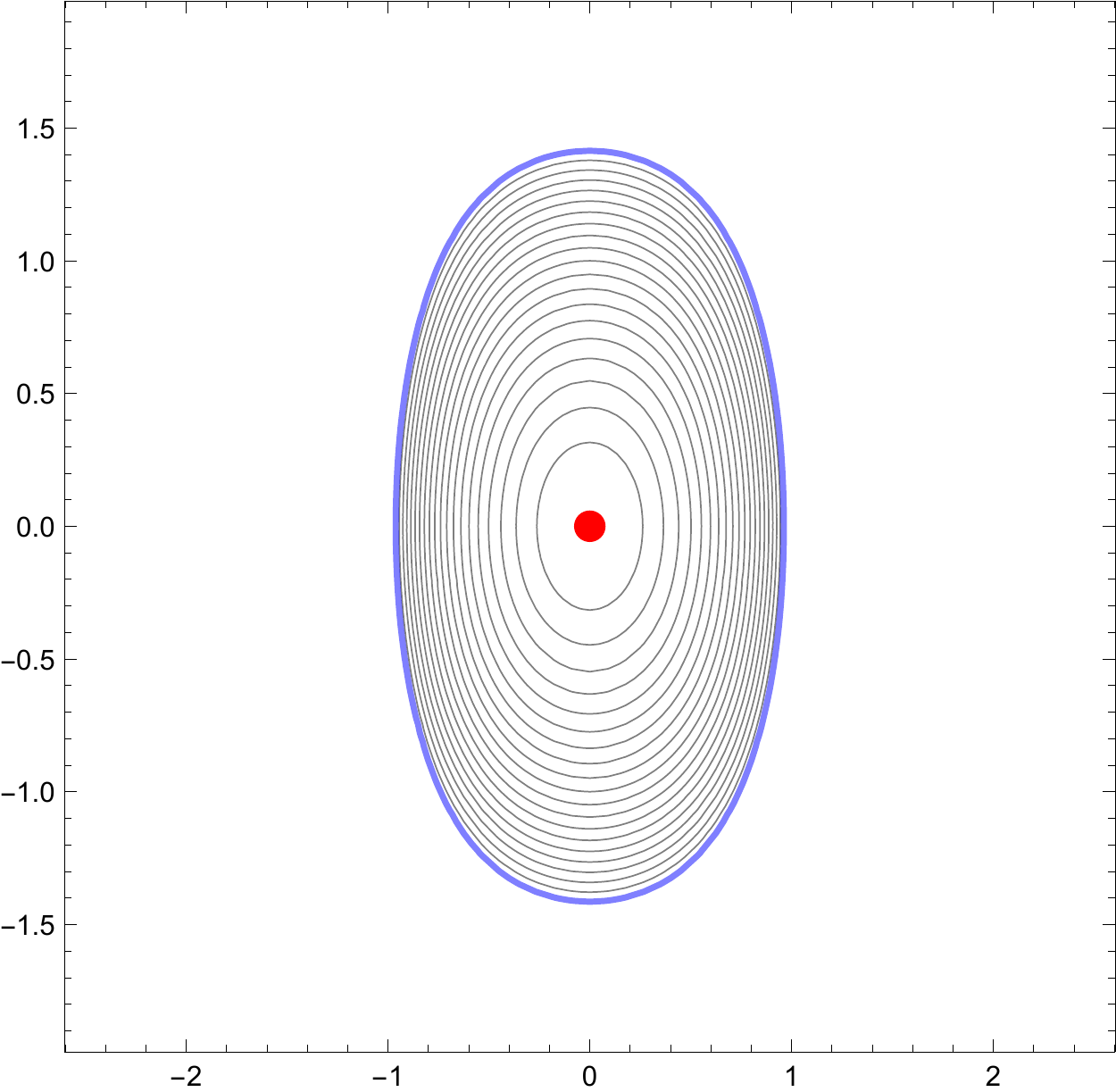}
\includegraphics[width=4.5cm]{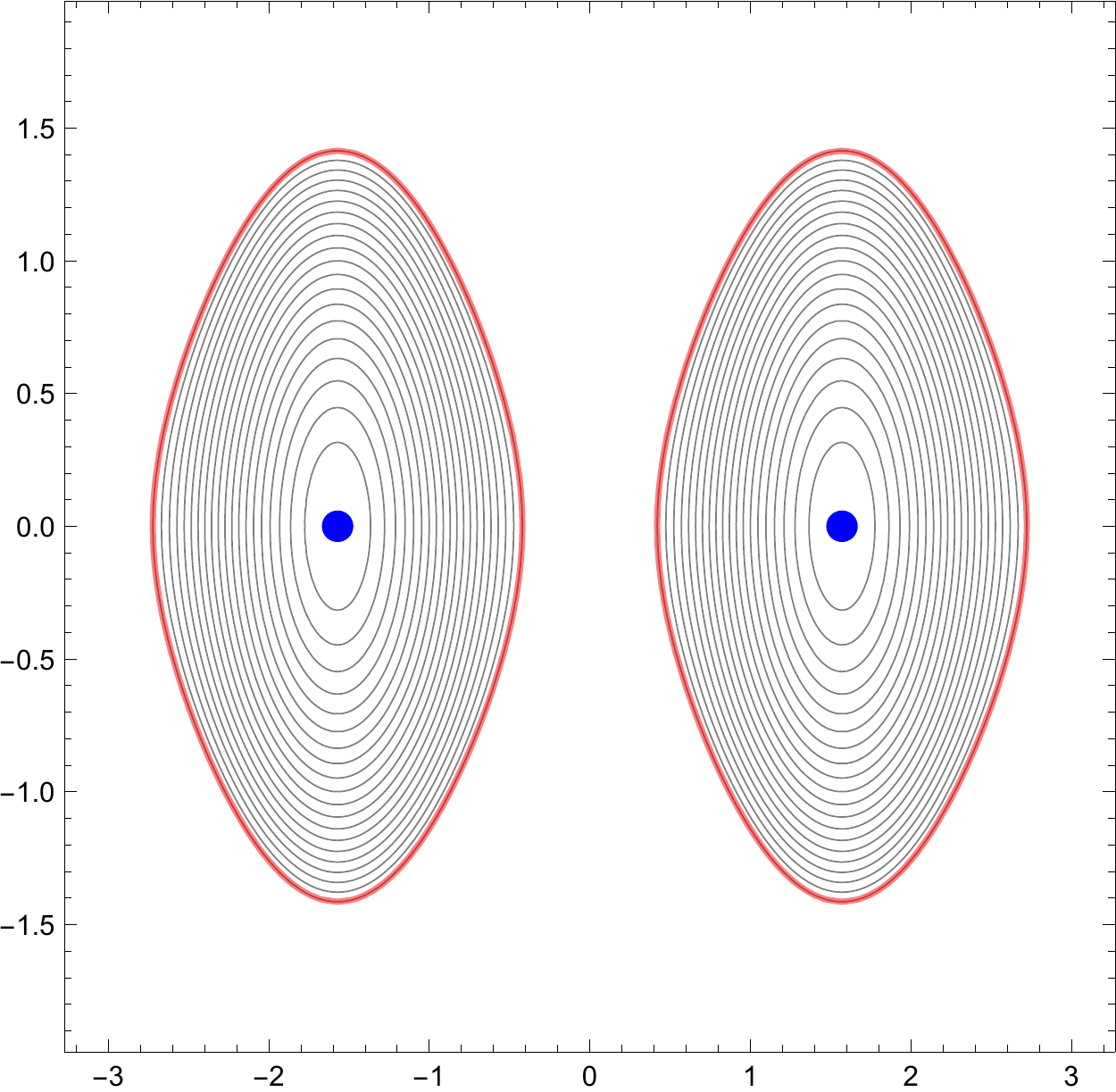}
}
\caption{Level lines of $H_\lambda$ (left) and $H_\nu$ (right)
with critical points marked by red and blue dots, respectively, 
and pre-images of critical values in the corresponding color. 
Green contours denote the levels corresponding 
to the three example shown in Fig.~1.
Always $m_1 = m_2 = 1/2$, $d = 1$. 
Top: $h = -1/6 > h_\lambda $,  types S, L, and P occur. 
Middle: $h = -2/3 > h^* $,  types S, and L occur. 
Bottom: $h = -6/5 < h^*  $,  type S only.}
\label{fig:CPsym}
\end{figure}

\begin{figure}
\centering{ 
\includegraphics[width=4.5cm]{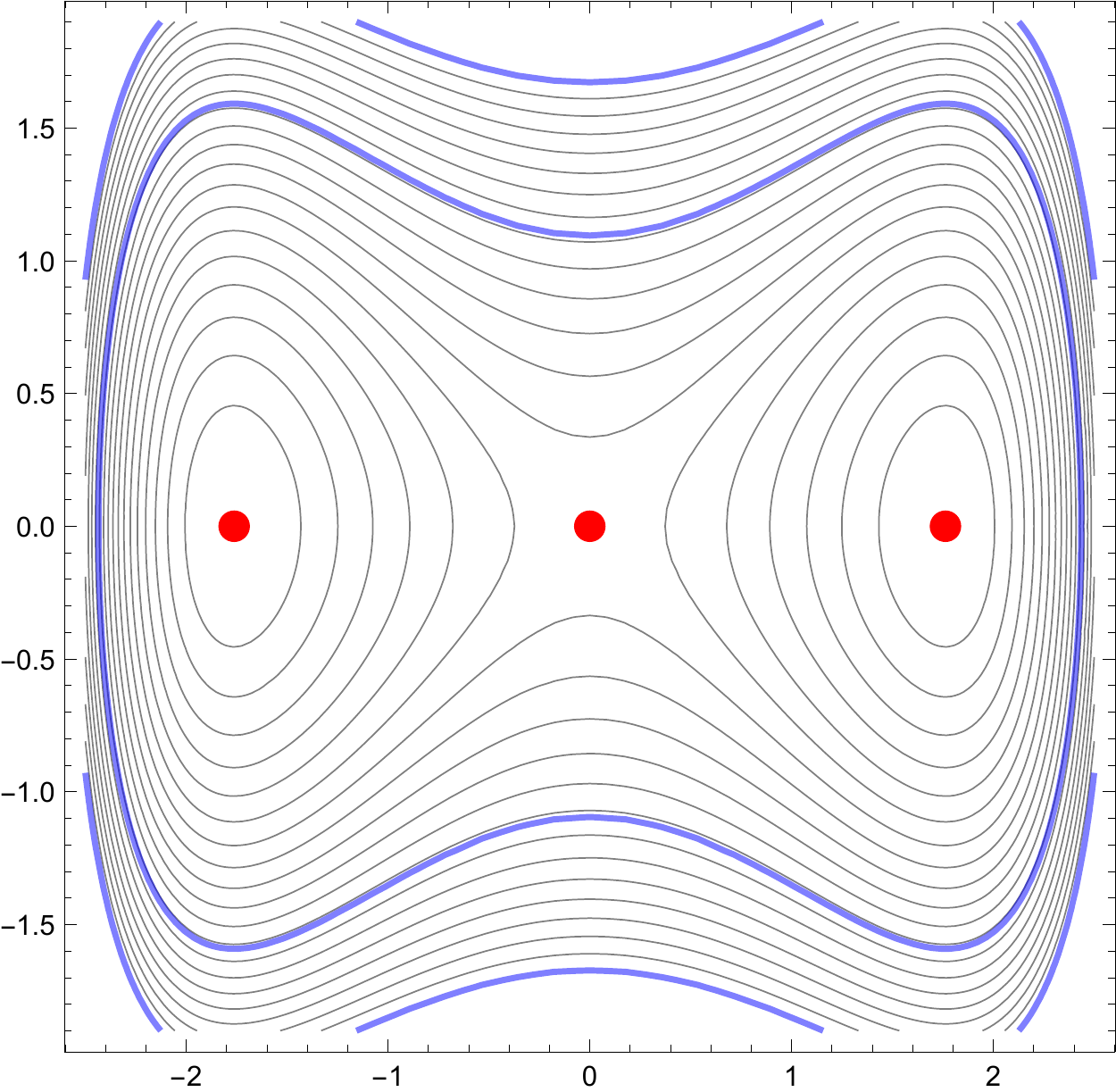}
\includegraphics[width=4.5cm]{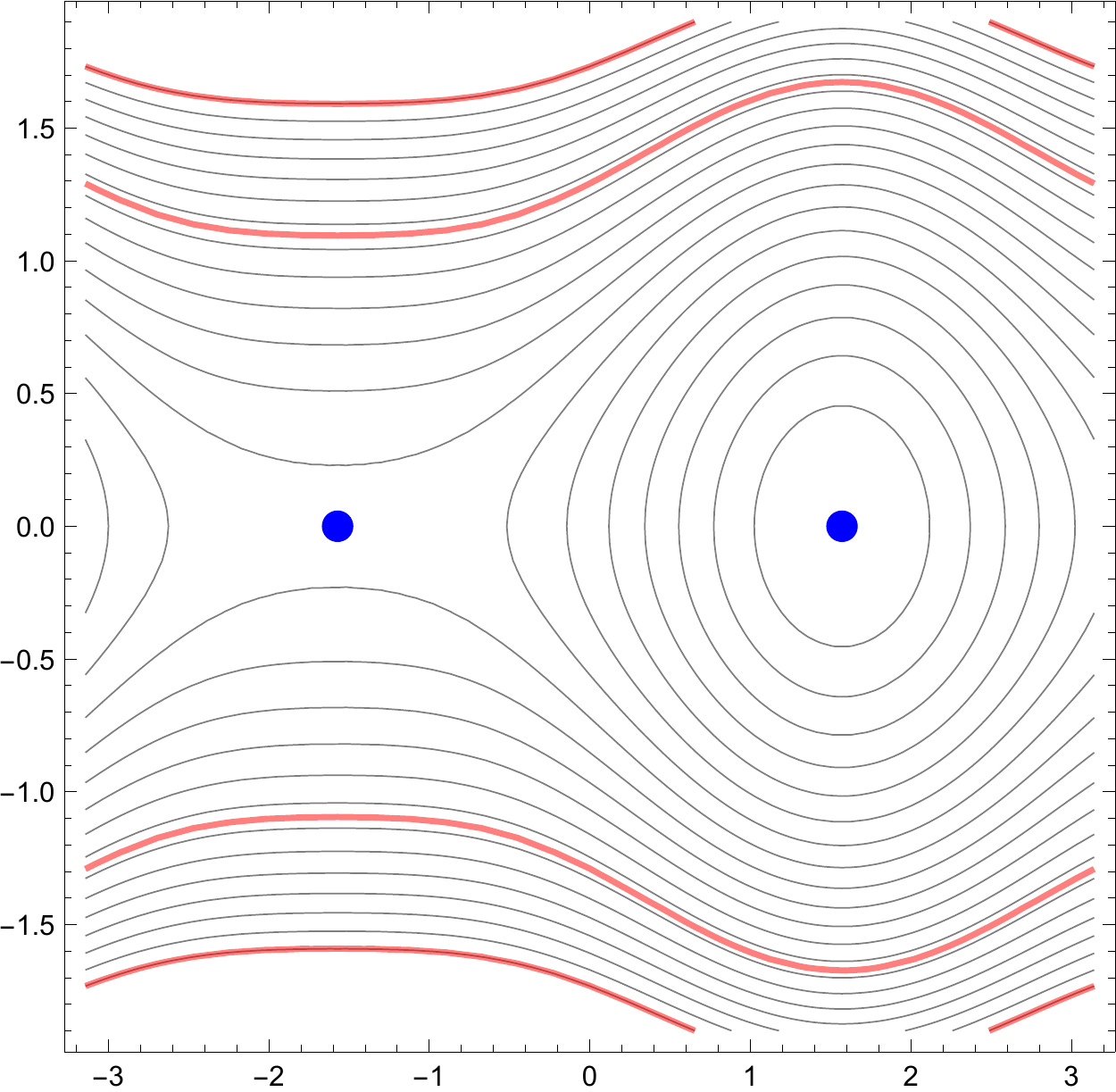}
}\\
\centering{ 
\includegraphics[width=4.5cm]{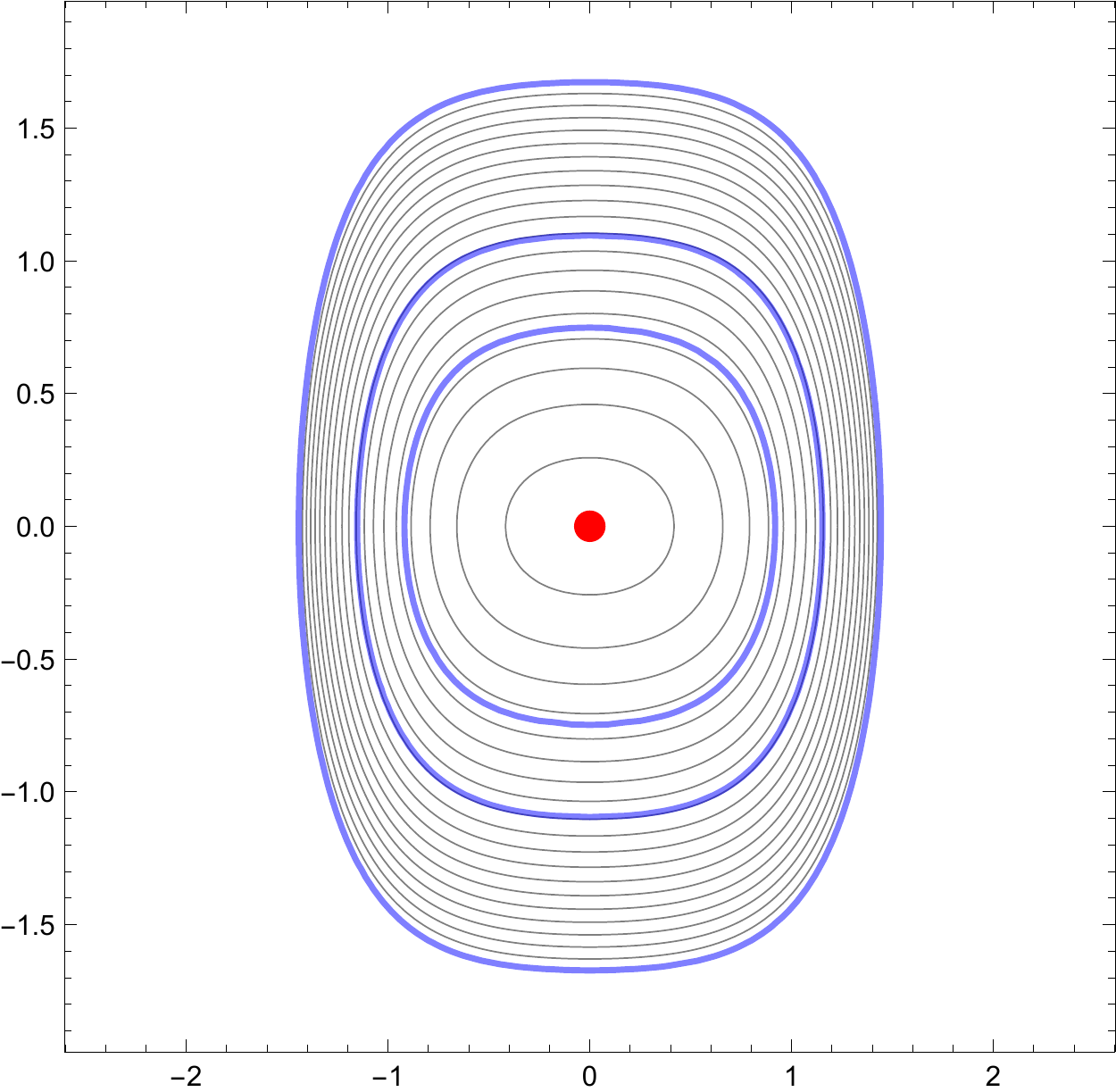}
\includegraphics[width=4.5cm]{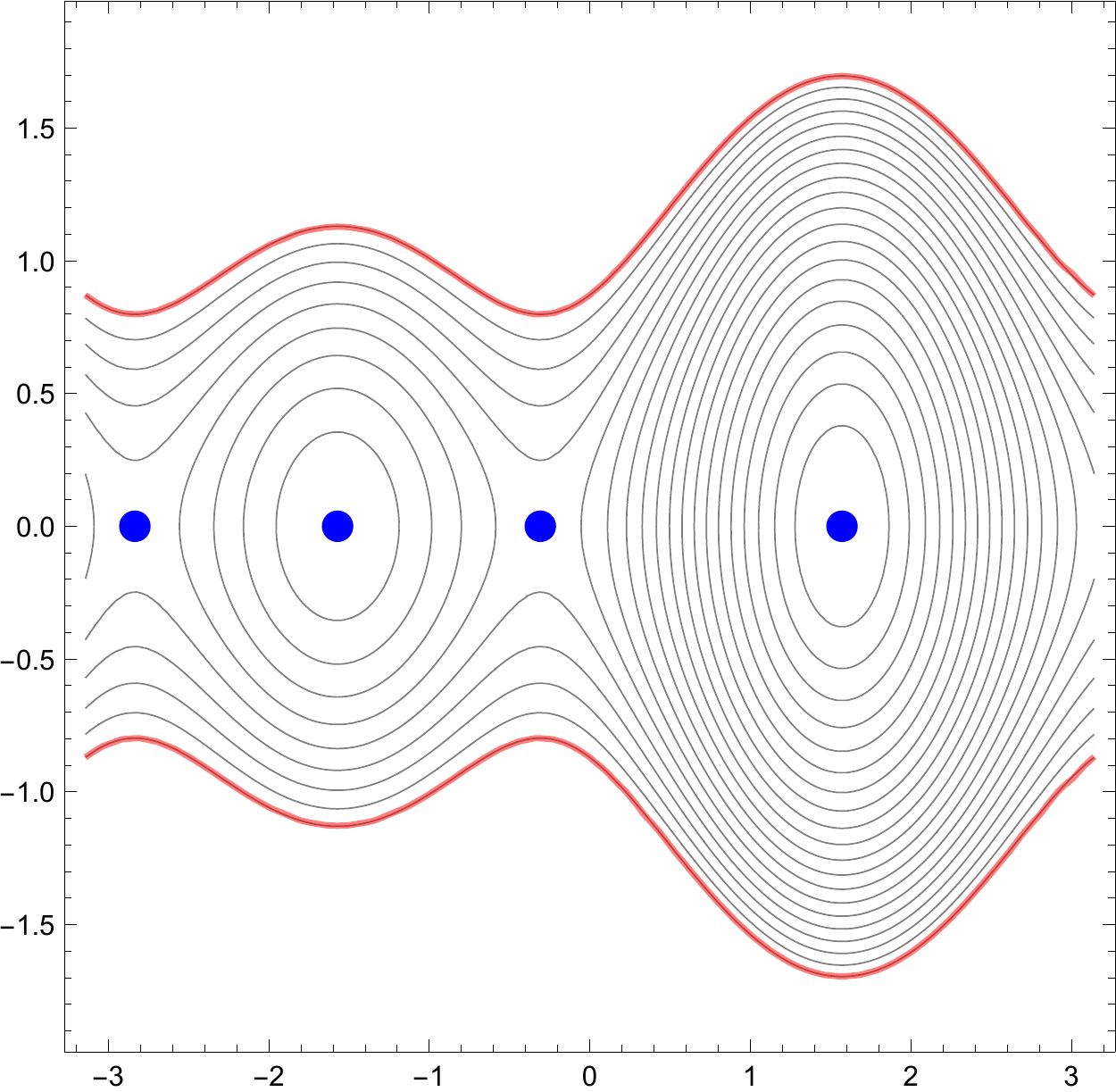}
}\\
\centering{ 
\includegraphics[width=4.5cm]{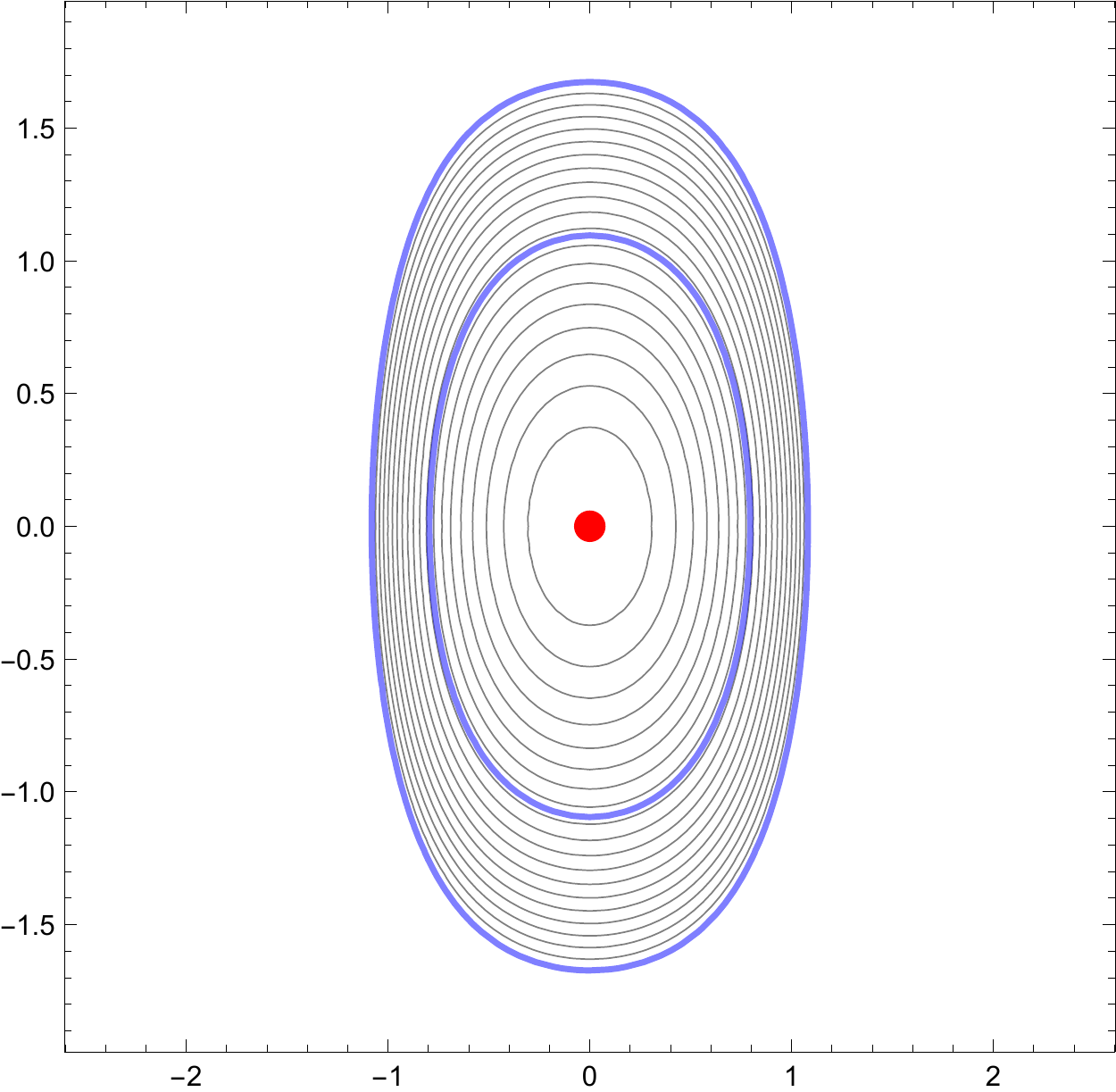}
\includegraphics[width=4.5cm]{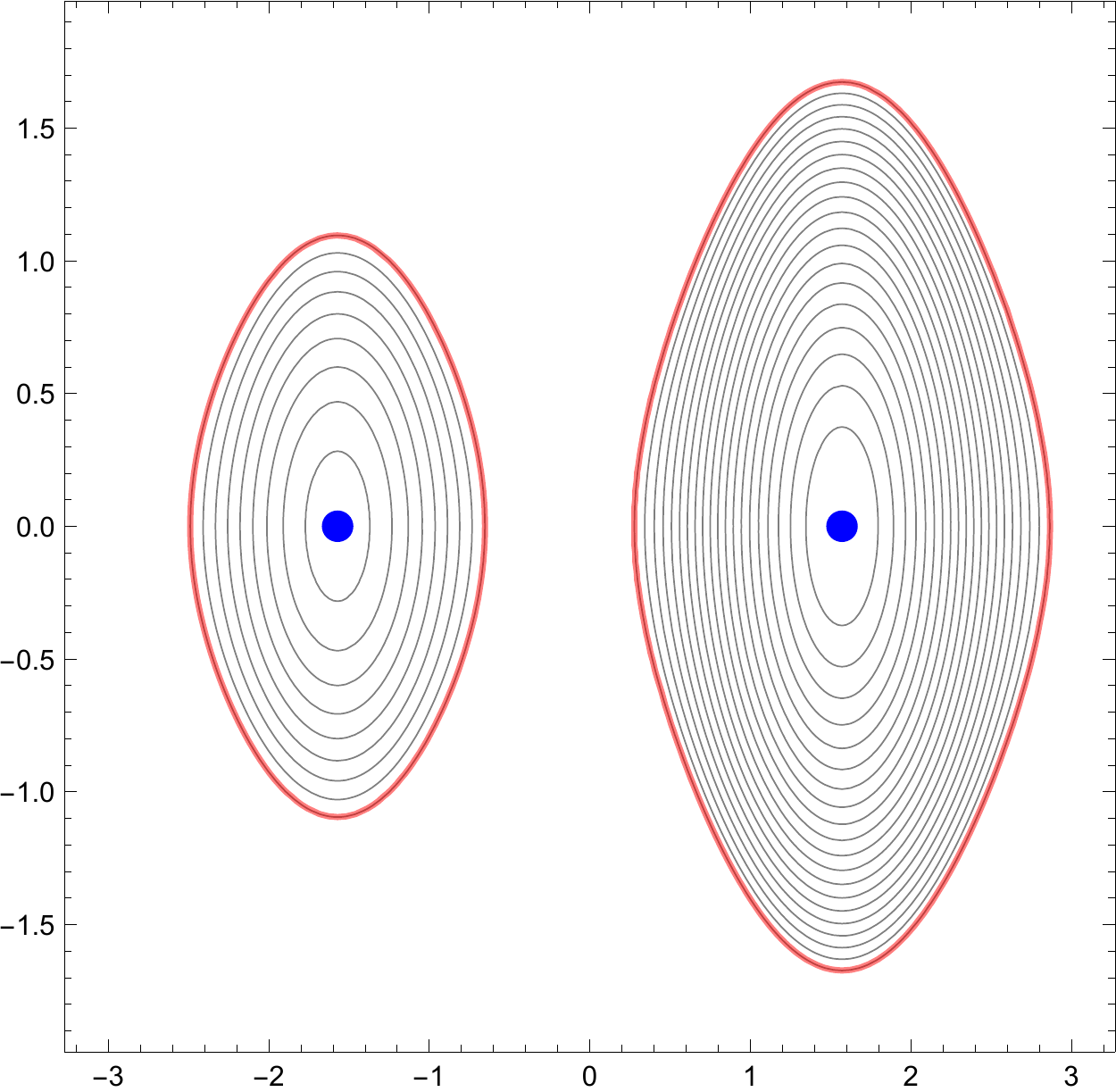}
}
\caption{Level lines of $H_\lambda$ (left) and $H_\nu$ (right)
with critical points marked by red and blue dots, respectively, 
and pre-images of critical values in the corresponding color. 
Always $m_1 = 3/10$, $m_2=7/10$, $d = 1$. 
Top:     $h = -1/6 > h_\nu$,  types S', L, and P occur. 
Middle: $h = -2/3 > h^*$,  types $\mathrm{S'}$, S, and L occur. 
Bottom: $h = -6/5 < h^*$,  type S and S' only. }
\label{fig:CPasy}
\end{figure}

In this section we recall what we need from the detailed description of the solution to the problem of two fixed centers 
given by  \cite{WDR03}.  Many of the results here are refered to in theorem 2. 

Without loss of generality we set $d = 1$  so that the centres are place at $(1, 0)$ and (-1,0). 
Standard confocal elliptic coordinates $(\lambda , \nu)$ are then defined by the relation $x + i y =  \cosh( \lambda + i \nu)$.
We will slightly alter the standard relation, defining instead
\[ x + i y = \sin( \nu + i \lambda) \,.
\]
In terms of real and imaginary parts we have
$$(x,y) = ( \cosh \lambda  \sin \nu,  \sinh \lambda \cos\nu) $$ 
Combined with  the time reparameterization $\tau = \tau(t; \lambda, \nu)$ given by  
$\dee t = ( \cosh^2 (\lambda) - \cos^2 (\nu)) \dee \tau$, these variables regularize and  separate the  
   Hamiltonian at energy $h$: 
\[
    \tilde H =  \tilde H(\lambda, \nu, p_{\lambda}, p_{\nu} ; h)  := ( H - h) \frac{\dee t}{\dee \tau}  = H_\lambda(\lambda, p_\lambda) + H_\nu( \nu, p_\nu)
\]
into two one-degree of freedom Hamiltonians:  
\[
\begin{aligned}
    H_\lambda( p, q ;h)  & = \frac{ p^2}{2} - ( m_1 + m_2) \cosh q -  h \cosh^2q, \\
    H_\nu( p, q ;h)          & = \frac{ p^2}{2} +  ( m_1 - m_2) \sin q +  h \sin^2 q \,.    
\end{aligned}
\]
We must take the value  of $\tilde H$ to  be  $0$  since we want $H = h$.  Then we have that   $H_\lambda(p,q) = -g$ and $H_\nu(p,q)=g$ 
where $g$ is the value of the separation constant  which is also the second integral $G$, see equation \eqref{G}. (Our $H_{\lambda}, H_{\nu}$ differ from those
in \cite{WDR03} by $-h \cosh^2 (\lambda), h \sin^2 (\nu)$ respectively.)

 The map $(\lambda, \nu) \to (x,y)$ defines a branched  cover of the $(x,y)$ plane, 
 branched over the two centres.    The inverse image  of the $x$ axis ($y = 0$) consists of the 
 lines  $\lambda = 0$ and $\nu = \pm \pi/2 + 2k \pi,  k\in \Z$.   The intersections of these lines,
 $(\lambda, \nu) = (0,  \pm \pi/2 + 2k \pi)$ get mapped in an alternating way to the two centres.
 The inverse image of window  3 (the segment between the two centers) corresponds to   the line $\lambda = 0$. 
The inverse image of window   1 corresponds to the lines $\nu = -\pi/2 + 2 k \pi$.
The inverse image of window    2 corresponds to the lines  $\nu = \pi/2 + 2 k \pi$.
It is central to    our analysis that the symbols 1, 2 and 3 are defined by   coordinate lines in the separating  variables.

The energy has a single critical point lying on window 3 corresponding to an unstable equilibrium lying on the $x$-axis at the 
point where the forces exerted by the two centres balance.  The associated critical value of energy  is
\begin{equation} \label{hstar}
    h_*= - ( \sqrt{ m_1} + \sqrt{m_2})^2/2
 \end{equation}
and serves as a bifurcation value.  
{\it We view $\nu$ as an angular coordinate} so that $(\lambda, \nu) \in \R \times S^1,  S^1 = \R/ 2 \pi \Z$. Then the  Hill region  for energy $h$ is  the domain
\[ 
   \text{ Hill}(h)= \{(\lambda, \nu):  \exists p_{\lambda}, p_{\nu} \text{ such that } \tilde H(\lambda, \nu, p_{\lambda}, p_{\nu} ; h) = 0 \} \subset \R \times S^1 \,.
\]
 For $h < h^*$
the Hill region consists of two disjoint disks, one disc about each centre.
These   discs merge at the equilibrium for  $h = h^*$.  For 
 $0 > h > h_*$   the Hill region is a connected domain, topologically an annulus $S^1 \times I$,   wrapping once around the 
 cylinder.   
 
 See Fig.~\ref{fig:CPsym} and  ~\ref{fig:CPasy} for phase portraits of the two one-degree of freedom Hamiltonians for 
 various values of the masses. In these figures we make a number of contour plots
 in order to indicate   how the 
 one-degree-of-freedom systems  combine to describe
the two-degree-of-freedom system.  
 
 We now collect known facts about the critical values of the one-degree of freedom Hamiltonians
 and the  implications these facts  have for the critical values of the integral map $(G, H)$,
 see \cite{WDR03} for details.

 Throughout we impose the condition  $h < 0$ since if $h  \ge 0$ all motions are unbounded. 
We set 
\begin{equation} \label{hcrit}
h_{\lambda} = -(m_1 + m_2)/2  \quad \text{ and } \quad h_\nu = - |m_1 - m_2|/2 ,
\end{equation} 
and  
\begin{equation}
 \label{kappa}
\begin{aligned}
  \kappa_{\pm\sigma}(h) & =   h + \sigma   | m_1 \pm m_2|, \quad \sigma = \pm \\
  \chi_\pm(h) & = -\frac{ (m_1 \pm m_2) ^2}{ 4h } \,.
\end{aligned}
\end{equation}
Here the first subscript $+$ refers to critical values of $H_\lambda$, and the $-$ to critical values of $H_\nu$.

{\sc Critical values of $H_{\lambda}$}.   For $h < h_{\lambda}$ the function $H_{\lambda} (\cdot, \cdot, h)$
has exactly  one critical value,    $-\kappa_{++}$. 
For $h_{\lambda} < h$  the function has exactly  two critical values
$-\kappa_{++}$ and   $-\chi_+$.  

 {\sc Critical values of $H_{\nu}$}. For $h_{\nu} < h$ the function   $H_\nu(\cdot, \cdot, h)$ has  exactly two critical values $\kappa_{--}$  and $\kappa_{-+}$.
 For $h < h_{\nu}$ the function  has three critical values $\kappa_{--}$, $\kappa_{-+}$,
 and   $ \chi_-$.
 
{\sc Critical and Regular values of the integral map.} We can
deduce the critical and regular values of the integral  map $(G, H) : T^*(\R \times S^1) \to \R^2$ immediately from the above description of the
critical values  of the one-degree of freedom Hamiltionians  and the separation of variables.  
  The critical values of the map
consists of the  union of  either four or five
 analytic curves depending on the values of the masses.  {\bf See figure ~\ref{fig:EM}}. Two or three of these curves are straight lines with slope 1.  The remaining  two
 curves are arcs of hyperbolas.   
  The lines  are  
$g =  \kappa_{++}(h)$ , $g =   \kappa_{-+}(h)$, and $g = \kappa_{--} (h)$. 
The  arcs of hyperbolas are given by 
$g =   \chi_+ (h)$ for $h_\lambda \le h $
and 
$g =  \chi_- (h)$ for $h^* \le h < h_\nu$.
The image of the  integral map is the region bounded between the leftmost and rightmost of these curves  (remember: $h  < 0$  always!).  This image is divided by the curves
into three or four   simply connected curvilinear domains 
whose interiors consist of the map's  regular values. These regions, {\it henceforth called the ``regular regions''} are 
denoted  by the  symbols 
 S', S, L, P as indicated in the figures. 
The preimage of a  point in   region  S or P is  two disjoint 2-dimensional tori,
while if the point is in  S' or L this  preimage is   a single   torus.
The terminology goes back to Charlier \cite{Charlier02} and Pauli \cite{Pauli22}. S and S' stands for satellite, L for Lemniscate, and P for planetary.

\begin{figure}
\centering{ 
\includegraphics[width=7cm]{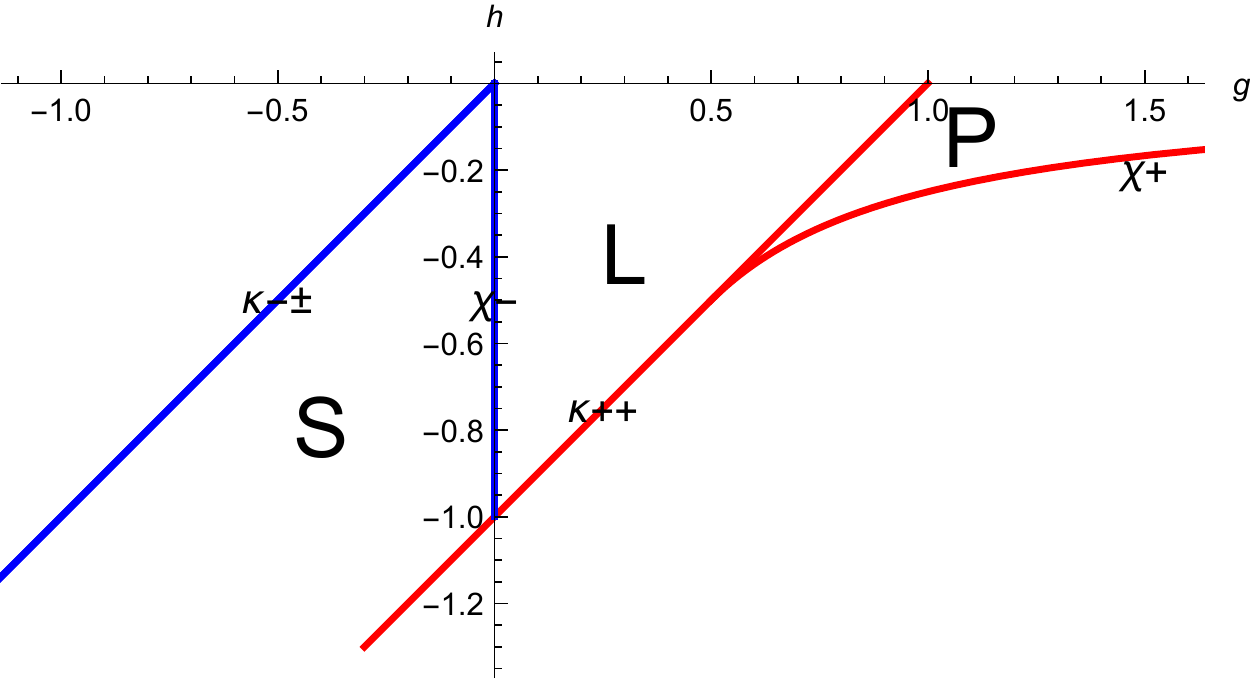}
\includegraphics[width=7cm]{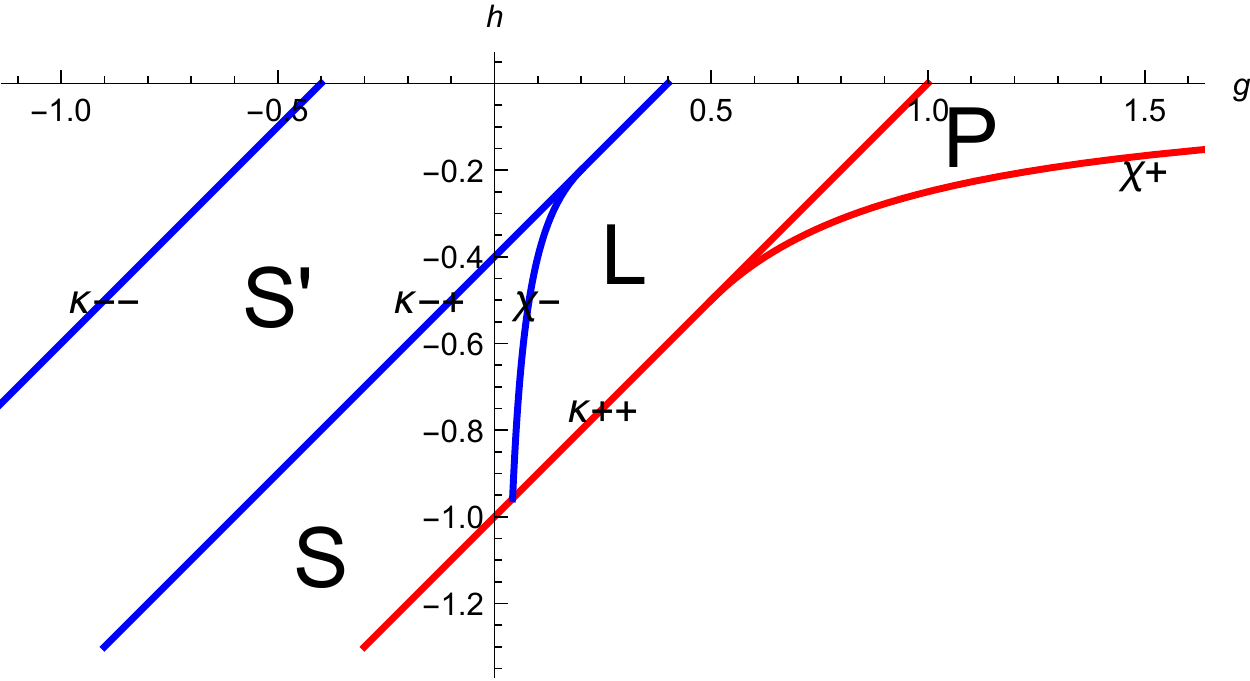}
}
\caption{Critical values of the energy momentum map $(G, H)$ 
in the symmetric case $m_1 = m_2 = 1/2$ (left)
and in the asymmetric case $m_1 = 3/10$, $m_2 = 7/10$ (right). 
The regions of regular values are marked by S, L, P, 
for satellite, lemniscate, and planetary type, respectively.
Blue lines are critical values of $H_\nu$;  red lines are critical values of $H_\lambda$.
}
\label{fig:EM}
\end{figure}

Our  one-degree of freedom systems  
have  period functions $T_\lambda(g,h)$ and $T_\nu(g,h)$ defined by an integral over the curve
$H_\lambda (\cdot, \cdot , h) = -g$ and  $H_\nu(\cdot, \cdot , h)= +g$.
These period functions  are analytic
within each regular region of the $(g,h)$ plane.  
Our tori $\T(g,h)$ admit a natural homology basis, or coordinate system, 
corresponding to our separation of variables.  We always define the rotation
number $W$ in terms of this basis.
Then our rotation number is
\begin{equation}
\label{RotnNumber}
W (g,h) = T_{\nu} (g,h)/ T_{\lambda} (g,h)
\end{equation}
which is a piecewise analytic function.   
We give explict formulae for $W$ further on.  
One surprise is that $W$ only depends on the value $(g,h)$ of the integral map in the case S where there are two tori for a given $g,h$:
the value of the rotation number on these two tori  is the same. 
Another surprise is that $W$ is analytic across the curve separating the region S from S'. 

See Fig.~\ref{fig:TW} (bottom) for a plot of $W$ for  energy $h = -1/4$ in the symmetric 
case. For this energy all three types S, L, and P occur.

The function $T_{\lambda}$ has   discontinuity only along the red singular curves of figure~\ref{fig:EM} 
so  has two `branches'  denoted 
$T_{\lambda 3}$ and $T_{\lambda 0}$ the first having   domain   formed by the union of S,  S'  and L, while the  second branch has   domain   P.  
(In case S'  is empty the first domain is just S union L.)
The function $T_{\nu}$ has its only discontinuity along the blue  singular curves of figure~\ref{fig:EM}  
so   also has two branches,   denoted $T_{\nu o }$ and $T_{\nu r }$, the first branch having  
domain the union of S  and  S'  and  the second branch $T_{\nu r }$ having domain  the union of P  and   L. 
See figure~\ref{fig:Tlevels}.

Consequently, the three branches of $W_{S, L, P}$ of the   rotation number are given by:  
\[
    W_S(g,h) =  \frac{T_{\nu o}(g,h)}{T_{\lambda3}(g,h)}, \quad
    W_L(g,h) =  \frac{T_{\nu r}(g,h)}{T_{\lambda3}(g,h)}, \quad
    W_P(g,h) =  \frac{T_{\nu r}(g,h)}{T_{\lambda0}(g,h)}\,,
\]

For us, the crucial property of the rotation number $W$ is its range. If we  fix  $h$,
and let $g$ vary, the rotation numbers   sweep out  an interval  
  $[W_{min} (h),  W_{max}(h)]$  which have the following dependence on   energy  $h$. \\
 For type L:
 \begin{equation}
   \label{WtypeL}
\begin{array}{c|cc}
h                                        & W_{min}              & W_{max}   \\ \hline
0 > h > h_\lambda            & 0                                &   \infty    \\
h_\lambda > h > h^*        &  W_L(\kappa_{++},h)  &  \infty   \\
\end{array}
\end{equation}
For type S:  
 \begin{equation}
   \label{WtypeS}
\begin{array}{c|cc}
h                                   & W_{ min} &  W_{ max}   \\ \hline
0 > h > h^*             &  W_S(\kappa_{--}(h),h) &\infty   \\
h^* > h                       &  W_S(\kappa_{--}(h),h) & W_S(\kappa_{++}(h),h) 
 \end{array}
\end{equation}
where the critical values $\kappa_{++}(h), \kappa_{- +}(h), \kappa_{--}(h)$ and $h_\nu$, $h_\lambda$ were defined 
in equations \eqref{kappa} and \eqref{hcrit} above.

\begin{figure}
\includegraphics[width=8cm]{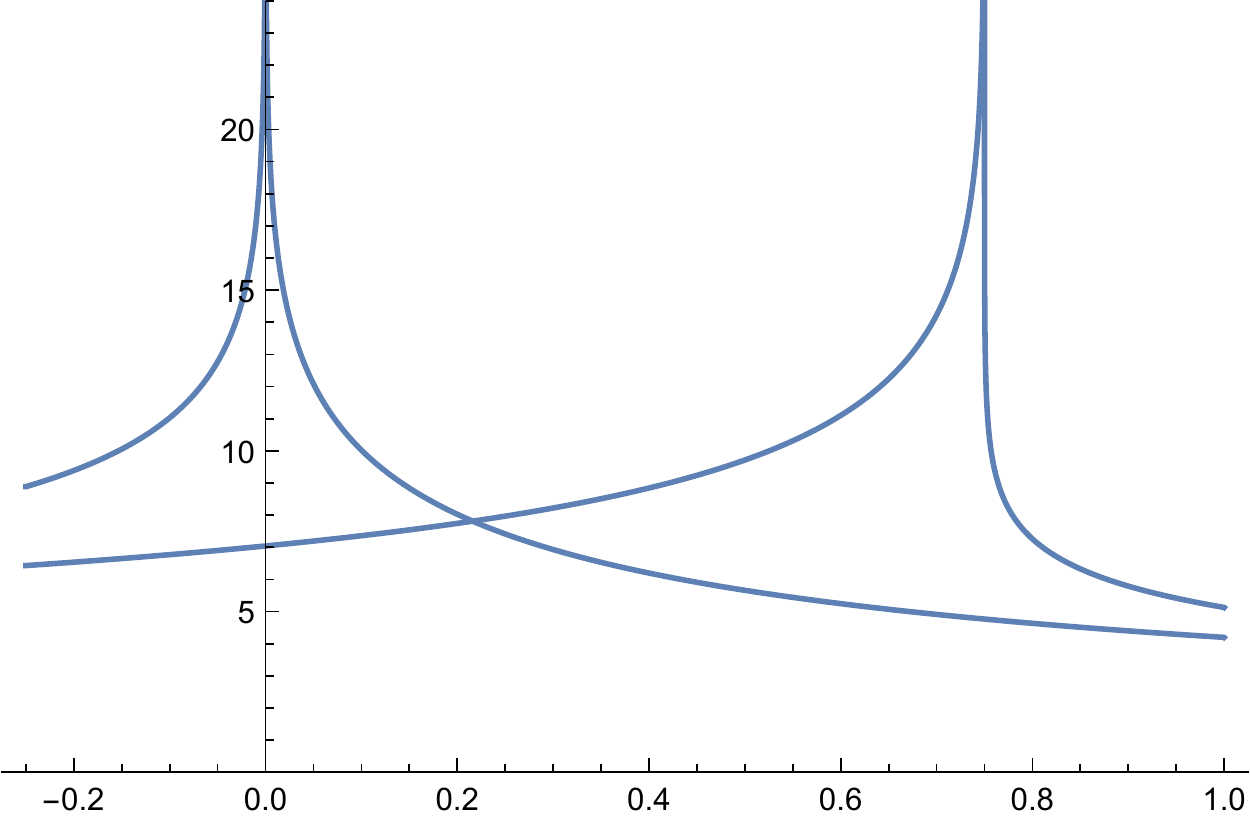} \\
\includegraphics[width=8cm]{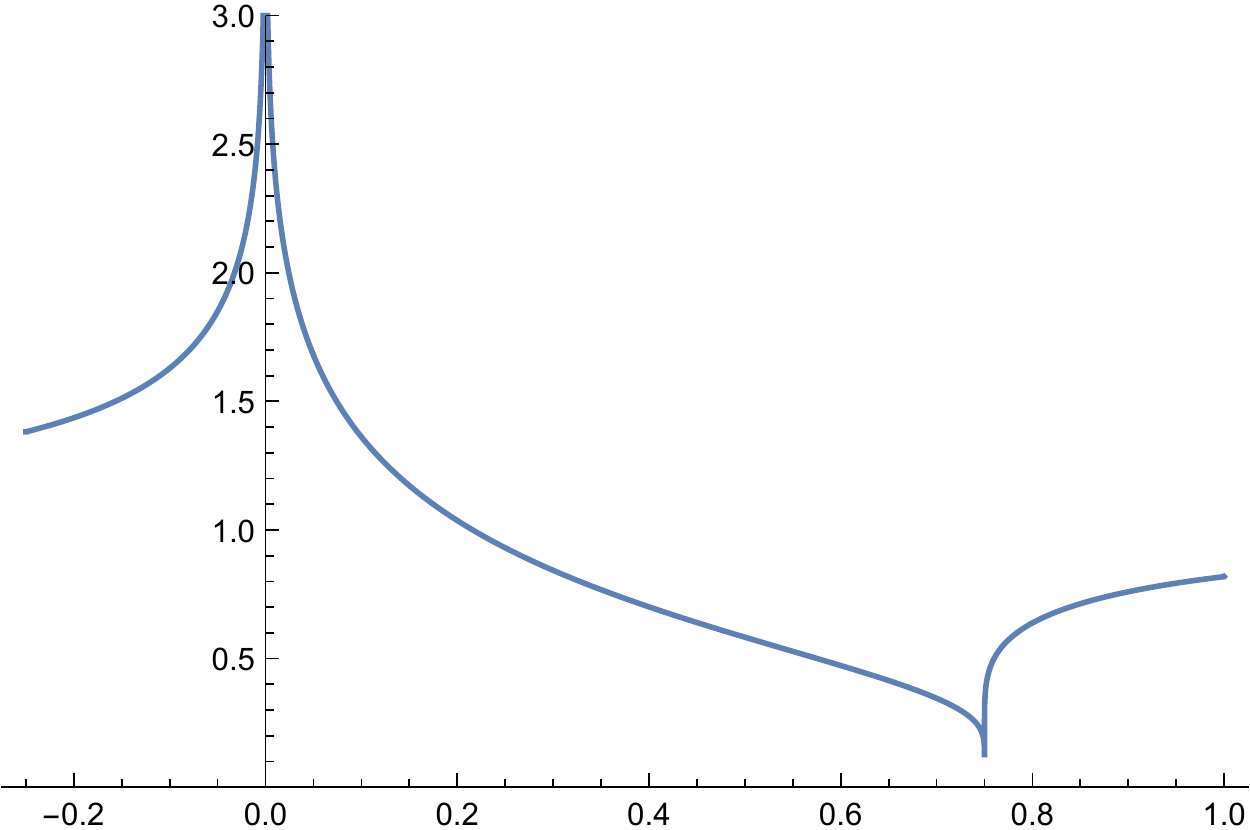}
\caption{The period $T_\nu$ (diverging at 0) and $T_\lambda$ (diverging for positive $g$) (both top) and 
the rotation number $W = T_\nu / T_\lambda $ (bottom)
all for $h = -1/4$, $m_1 = m_2 = 1/2$, $d=1$. 
The type of corresponding tori is S, L, P from left to right, separated by singularities.}\label{fig:TW}
\end{figure}

\begin{figure}
\includegraphics[width=7cm]{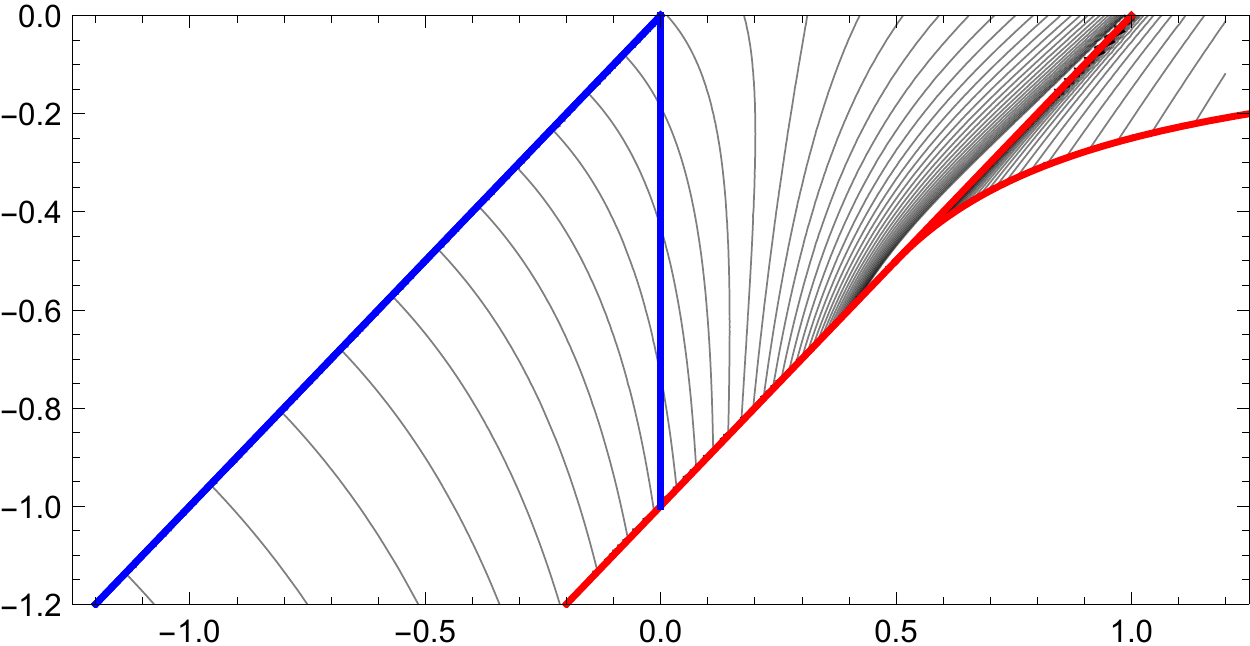}
\includegraphics[width=7cm]{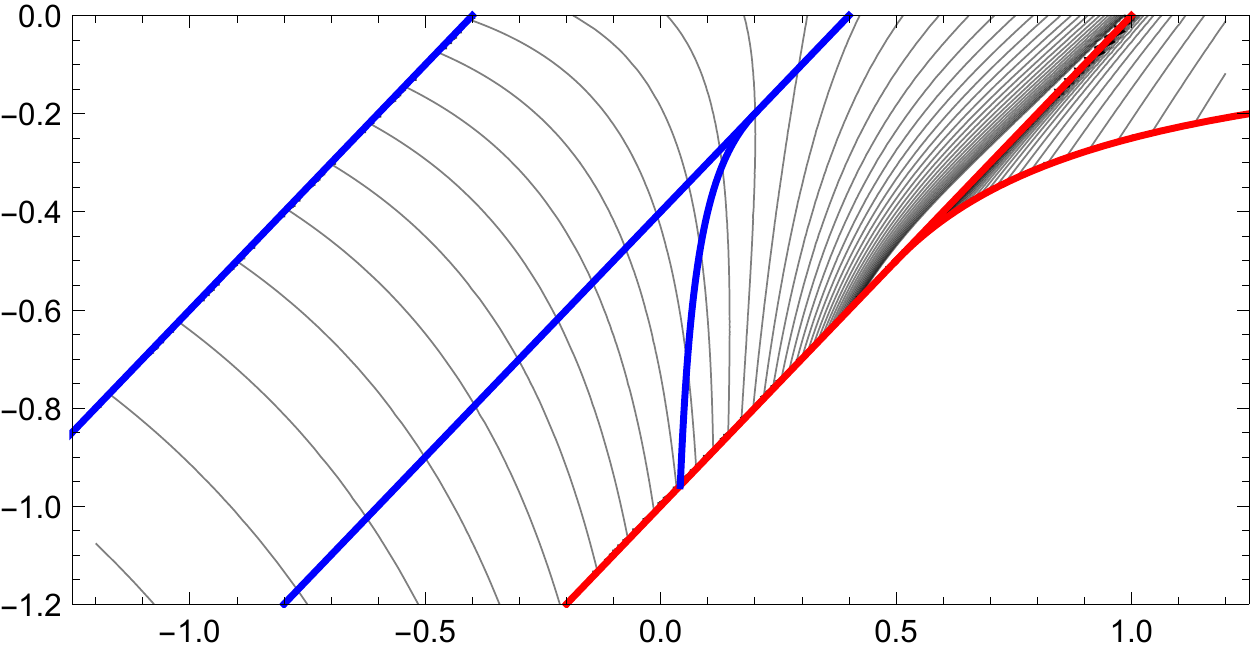}
\includegraphics[width=7cm]{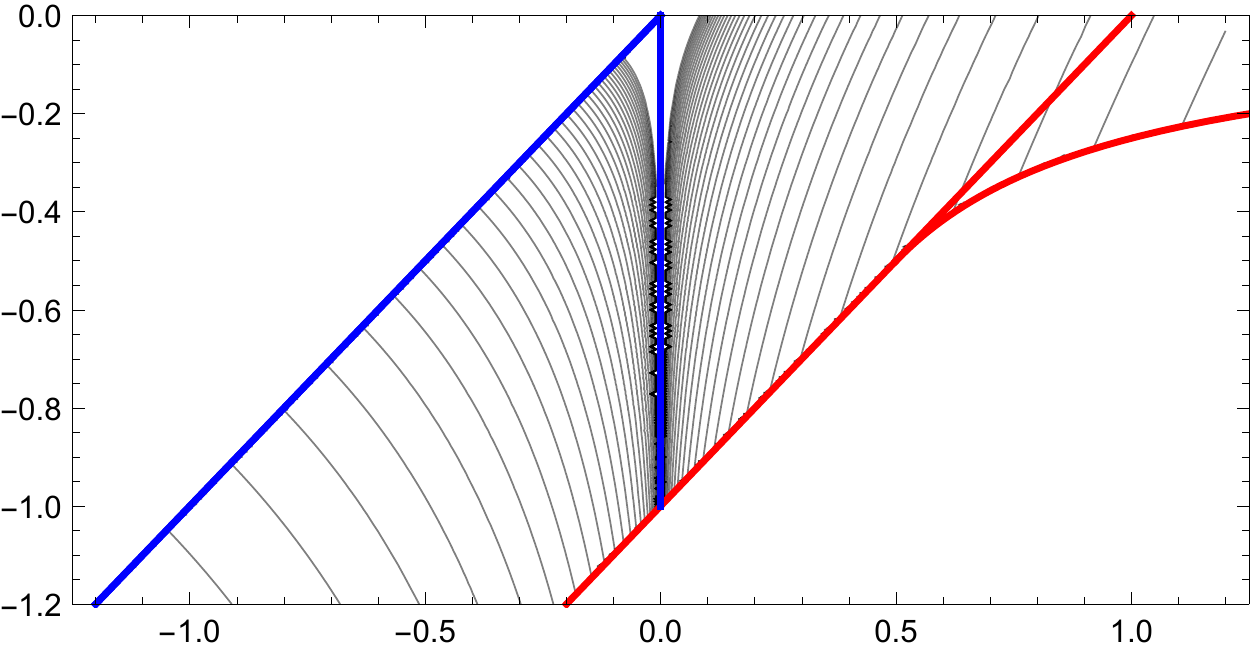}
\includegraphics[width=7cm]{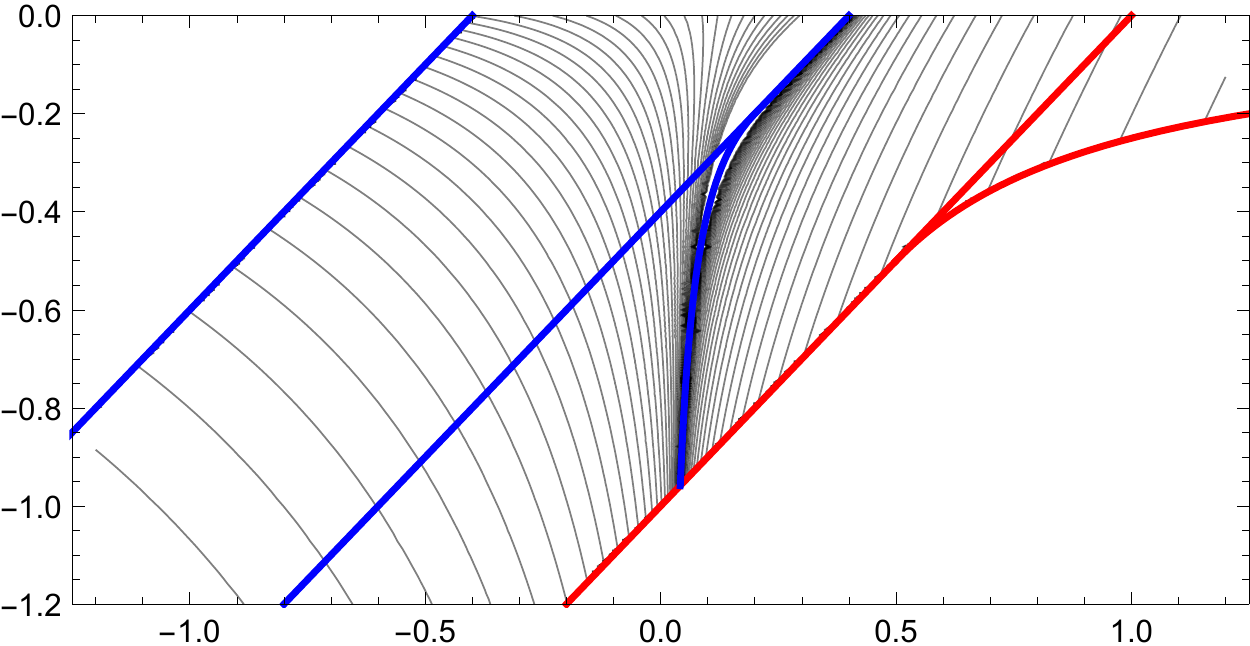}
\caption{Contours of constant period $T_\lambda$ (top) and $T_\nu$ (bottom) 
for the symmetric case $m_1 = m_2 = 1/2$ (left) and an asymmetric case $m_1 = 3/10$, $m_2 = 7/10$ (right).}
\label{fig:Tlevels}
\end{figure}

We now proceed to determine the  functional forms of the    period functions, and hence of the rotation number.
We have  summarized their forms    in Fig.~\ref{fig:Tlevels}.  We use  the standard method of period computation from the classical mechanics
of one-degree of freedom   systems (e.g.~\cite{LandauLifshitz}). 
Let $\Psi(q,p)$ denote either  $H_\nu (q,p; h)$ or $H_\lambda (q,p;  h)$.   $\Psi$ has  the form
$\frac{1}{2} p^2 + V(q, h)$. The first of Hamilton's two equations reads $dq/d \tau = p$ so that $d \tau = \frac{dq}{p}$.
Solutions must lie on  energy curves $\{ \Psi = c\}$, $c = \pm g$,  which  consist for us  of   one  or two   topological circles.  
Choose a component  $C$ for a given constant  $c$. The  time $T$  to traverse $C$ is the corresponding period $T$ which  we want to compute.
This time  is  
given by the integral 
$$T = \int_C d \tau = \int_C \frac{dq}{p}.$$
(One can solve for $p$ in terms of $q$ : $p = \pm \sqrt{ 2(c - V(q;h)}$
and use the time-reversal symmetry to rewrite this as
$T = 2 \int_{q_{min}} ^{q_{max}} \frac{dq}{\sqrt{ 2(c - V(q;h)}}$.)
In our   cases the variable $q$ is either $\nu$ or $\lambda$ and the variable  $p$ is either  $p_{\nu}$ or $p_{\lambda}$
The substitution  $z = \sin \nu$ or $z = \cosh \lambda$  converts the integrand 
  $\frac{dq}{p}$  to the integrand 
 $\frac{dz}{P}$
where  the integral in the new variables is   around the loop    corresponding to our choice $C$ which
lies on  the Riemann surface: 
 $$
      P^2 =  \begin{cases}
      2(1-z^2)(+g + (m_1 - m_2)  z + h z^2 ), \qquad \nu \text{ case}   \\
      2(1-z^2)( -g + (m_1 + m_2)  z + h  z^2), \qquad \lambda \text{ case } \\
\end{cases}
$$
The integral $\int \frac{dz}{P}$ over  the  closed loop is a complete elliptic integral which  can be expressed in terms of 
Legendre's complete elliptic integral $\K(k^2)$ with modulus $k$.

For the symmetric case $m_1 = m_2$ we find
\[
   T_{\nu o}(g) =  \frac{4}{ \sqrt{ - 2 h} } \K( k^2),  \text{ for } g < 0 , \quad  
\]
\[   
   T_{\nu r}(g) =   \frac{4}{ k \sqrt{ - 2 h} } \K(1/k^2),  \text{ for} g > 0, 
\]
\[
\text{ where } 
   \quad k^2 = 1 - \frac{g}{ h}, 
\]
Expressions of elliptic integrals in terms of Legendre's standard integrals can, e.g., be 
found in \cite{BF71}.
Legendre's $\K$ is a smooth monotonically increasing function that maps $k^2 \in ( -\infty, 1)$
to $(0, \infty)$. 

To write down the period $T_\lambda$ and to treat the asymmetric case $m_1 \ne m_2$ introduce 
\[
     k_\pm^2 =  \frac12 + \frac{g +  h }{ 2 \sqrt{ 4 g h + (m_1 \pm m_2)^2}}, \quad
     f_{\pm\sigma} = \frac{\sqrt{2}}{\sqrt{ \sigma(-1 - \frac{g}{h}) - \frac{\sqrt{4 g h  + (m_1 \pm m_2)^2}}{ h} }}
\]
where $\sigma = 0$ or $\sigma = 1$.
Then the $\lambda$-periods are
\[
   T_{\lambda3}(g,h) =  \frac{4}{ \sqrt{ 2  |h|} }f_{+0} \K( k_+^2 ), \text{ for } g < \kappa_{++}(h) 
\]
\[
   T_{\lambda0}(g,h) =  \frac{2}{ \sqrt{ |h| }}f_{+1} \K(1/k_+^2),  \text{ for }  \kappa_{++}(h)  < g <  \chi_+ (h)
\]
When $h > h_\lambda$ the modulus $k_+^2$ is monotone in both cases,
increasing and decreasing, respectively.
Now $f_{+0}(g)$ is always monotonically increasing, so that $T_{\lambda3}$ is monotonically 
increasing in this energy range. 
%

Note that since without loss of generality we can set $m_1 + m_2  = 1$,
the  $T_\lambda$ periods do not change with parameters, only their domain of 
definition in the integral image changes. In particular the periods $T_\lambda$ 
in the symmetric case $m_1 = m_2$ and the asymmetric case $m_1 \not = m_2$ are 
given by the same functions.

For oscillations of $\nu$ (when $m_1 \not = m_2$) the period is
\[
   T_{\nu o}(g,h) =  \frac{4}{ \sqrt{  2  |h|} } f_{-0}  \K( k_-^2 )
\]
where for $h > h_\nu$ the domain is $g < \kappa_{-+}(h)$ and the modulus is monotone,
while for $ h < h_\nu$ the domain is $g < \chi_-$ and $k_-^2$ passes through 
zero at $ h \pm (m_1 - m_2)$.
The formula is valid in region S and in region S', in region S it gives the 
period for both tori of type S, around either center.

For rotations of $\nu$ (when $m_1 \not = m_2$) there are two cases depending on whether 
the elliptic curve has 2 real or 4 real branch points.
A formula that is valid in both cases is defined via
\[
  k_c^2 = \frac12 - \frac{ g +  h}{2 \sqrt{ (g -  h)^2 -  (m_1 - m_2)^2 } }
\]
and
\[
  T_{\nu r}(g,h) = \frac{4}{\sqrt{2} ( (g- h)^2 -  (m_1- m_2)^2 )^{1/4} }  K( k_c^2 ) \,.
\]
When there are four real branch points the sign of $k_c^2$ is negative, 
otherwise $k_c^2$ is positive.
Moreover, when $h < h_\nu$ the modulus $k_c^2$ is monotonically decreasing. 
When $h > h_\nu$ the modulus is monotonically increasing from $-\infty$ to 0,
which occurs at the boundary of the Lemniscate region
for $g = \kappa_{++}$.

The functions $T_{\nu o}(g,h)$ and $T_{\nu r}(g,h)$ are monotone functions of $g$, for $h$ fixed,  because $K(k^2)$ is monotonically increasing, 
and $k^2(g)$ is linear in $g$ and increasing for $h < 0$. So $T_{\nu o}(g)$ is monotonically increasing and diverges for $g \to 0_-$.
Now $1/k^2(g)$ is monotonically decreasing for $g > 0$ and so as a function of $g$,  $T_{\nu r}$ is the product of 
two monotonically decreasing functions, and hence monotonically decreasing. It diverges for $g \to 0_+$.

\noindent {\bf Conjecture.} The   period function $T_{**} (g,h)$ and rotation function $W_{*}(g,h)$ are  monotone functions of $g$ within  
 their domains of analyticity. 

Numerical experiments support  this conjecture.

Note that in the S-region the modulus $k_-^2$ is negative, and approaches $-\infty$ at 
the boundary to L.
Similarly, in the upper parts of the L- and P-region the modulus 
$k_c^2$ becomes negative, and on the boundary between L and S' 
it approaches $-\infty$.

\begin{figure}
\includegraphics[width=12cm]{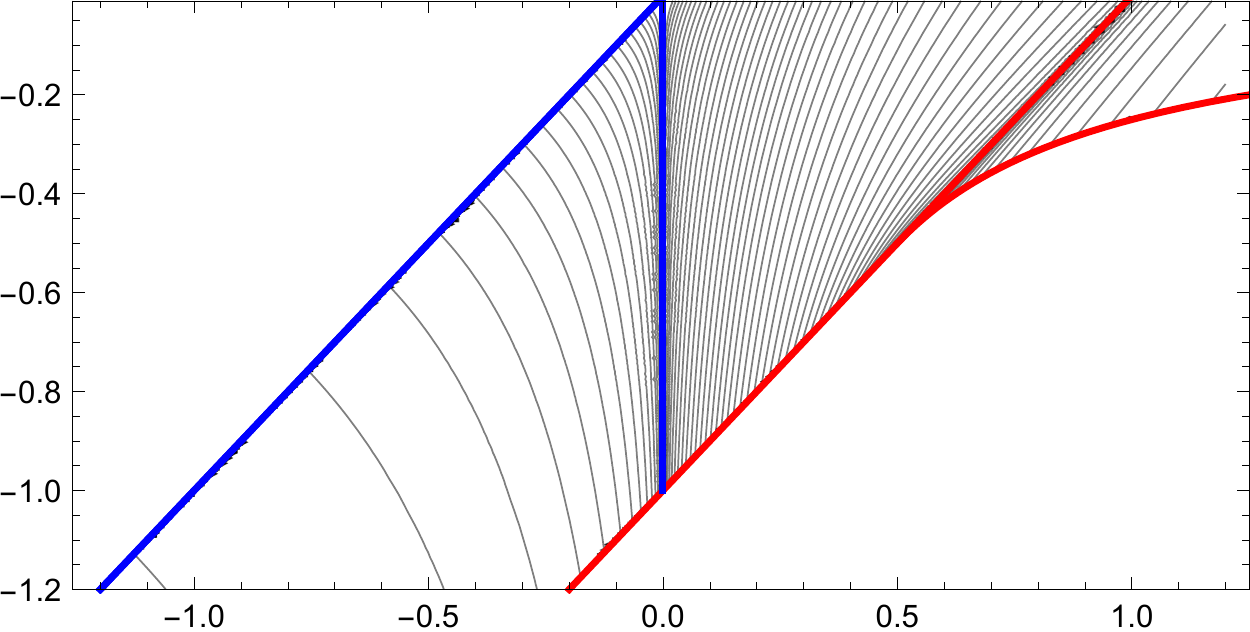}
\includegraphics[width=12cm]{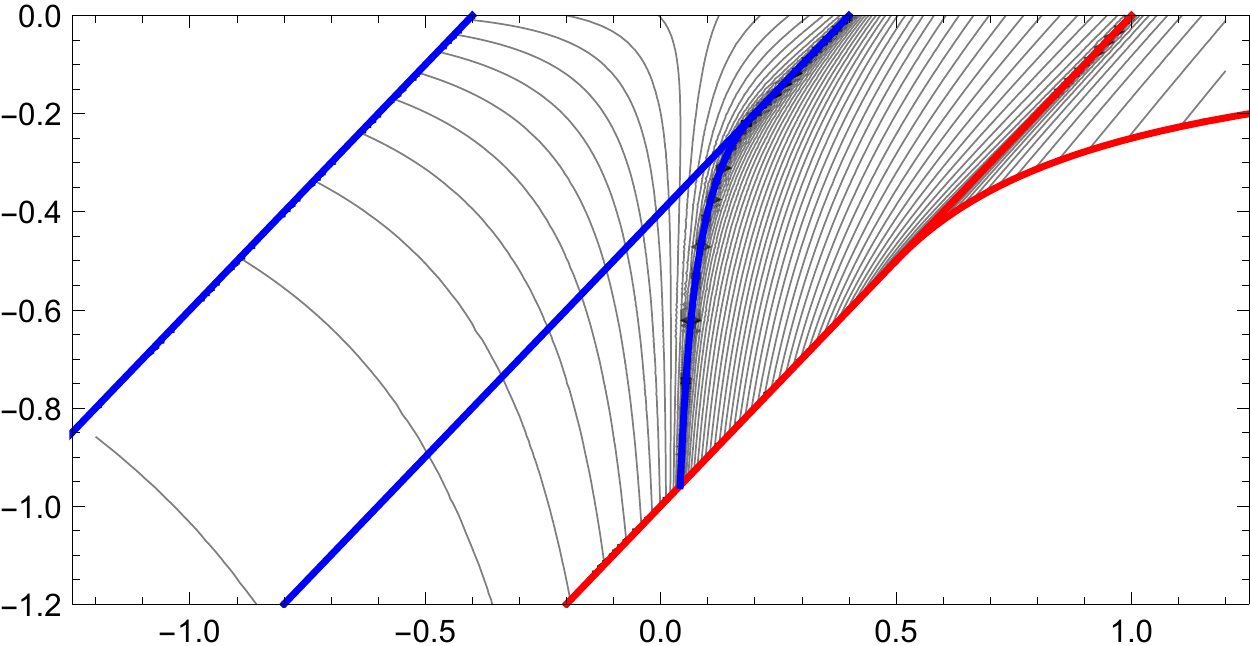}
\caption{Contours of constant rotation number $W$
for the symmetric case $m_1 = m_2 = 1/2$ (top) and an asymmetric case $m_1 = 3/10$, $m_2 = 7/10$ (bottom).
The spacing of contours is even in $W$ or $1/W$, whichever is smaller.}
\label{fig:Wlevels}
\end{figure}

\begin{figure}
\includegraphics[width=10cm]{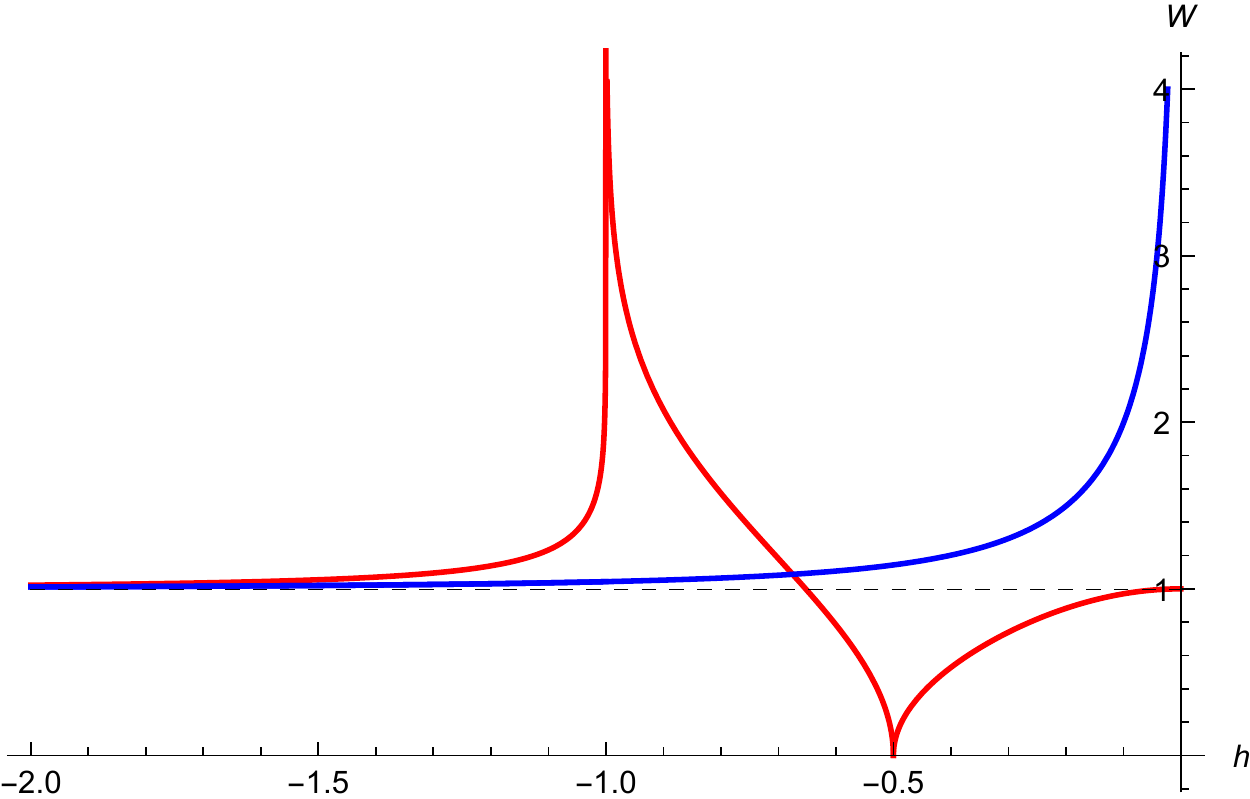}
\caption{The boundary values of the rotation number $W$ along 
the leftmost curve of critical values (blue, see Fig.~\ref{fig:EM}) which gives the minimum in S, 
and along the rightmost curve of critical values (red, see Fig.~\ref{fig:EM}). 
The three smooth parts of the red curve
correspond to the rightmost boundary values of tori of type S, L, and P, 
in that order. The red curves give the maximum in S, minimum in L, and maximum in P, respectively.
Parameters are $m_1 = m_2 = 1/2$, $d=1$.}\label{fig:WB}
\end{figure}


The form of the period functions yield the following facts about
the rotation number $W$.  
For  $h_\lambda < h < h_\nu$ the rotation number $W_L(g)$ is a monotonically decreasing function.
At the left boundary of the L region $W_L$ diverges to $+\infty$, while
at the right boundary of the L region for $h > h_\lambda$ the rotation number $W_L$ approaches 0.
For $m_1 = m_2$ the limiting values of $W$ at the other families of critical values are as follows, 
see figure~\ref{fig:WB} for graphs of these functions.

To obtain   tables \ref{WtypeL}    and \ref{WtypeS}  we need the minimum and maximum of $W(\cdot, h)$
within its various  intervals of   analyticity. 
For simplicity,  in the following formulas
we have set $m_1 = m_2 = 1/2$).  We compute  that 
\begin{itemize}
\item the minimum of $W_S( \cdot , h)$ is : 
\[
  W_S(\kappa_{--}(h),h) = \frac{\pi  \sqrt[4]{4  h^2+1}}{2 \sqrt{-2 h} \, \K\left(\frac12+\frac{ h}{\sqrt{4  h^2+1}}\right)}
\]
\item the maximum of $W_S( \cdot , h)$ is,  if finite, for $h < h^* $: 
\[
  W_S(\kappa_{++}(h),h) = \frac1\pi \sqrt{\frac{4  h + 2}{ h}} \, \K\left(-\frac{1}{ h}\right)
\]
\item the minimum of $W_L (\cdot, h)$ for   $h^* < h < h_\lambda $ occurs at 
\[
  W_L(\kappa_{++}(h),h) = \frac1\pi   \sqrt{-4  h-2} \, \K(- h)
\]
\item the  maximum of $W_P( \cdot, h)$ for   $h_\lambda < h < 0$ is: 
\[
W_P( \chi_+ (h),h) = \frac2\pi  \sqrt{\frac{1 - 4  h^2}{1 + 4  h^2}} \,  \K\left(\frac{1}{1+\frac{1}{4  h^2}}\right)
\]
\end{itemize}


In arriving at these facts we used that for the Lemniscate case 
 $T_{\nu r}(g,h)$ is increasing with $g$ and that $T_{\lambda3}(g,h)$ is decreasing with $g$,  
so that  $W_L(g,h )$ is decreasing with $g$.  
We also used that 
the period $T_\nu$ diverges at the left boundary of the L-region, 
while the period $T_\lambda$ diverges at the right boundary of the L-region when $h > h_\lambda$, 
hence we arrive at the  limiting behaviour of $W_L$. 
For a periodic orbit of type L or S with rational $W = p/q$ with co-prime $p$ and $q$
the period of the periodic orbit in original coordinates is $(p T_\lambda + q T_\nu ) /2$ where the factor $1/2$ 
accounts for the double cover.





\section{Finishing up the proof of theorem 2.}
\label{halfaunit}

We have established the range of the   rotation numbers in the previous section.
We established almost all of the proof in section \ref{guts}.  
All that  remains  to prove is that  the windows are situated as  in figure~\ref{fig:windows}.  
In particular, we have  not yet shown that the  line pairs are spaced   $1/2$ a unit apart. 

We saw that   the two-center  system separates into two  one degree of freedom systems,
that the separate torus coordinates correspond to these separate systems, with $\lambda, p_{\lambda}$ corresponding to the the $\theta_2$ motion
and $\nu, p_{\nu}$ to $\theta_1$-motion. 
The windows  on a given torus are vertical or horizonatl lines in the angle coordinaes
since   the collinear line  $x =0$ corresponds to $\lambda = 0$ or $\nu = const$ and
since the axes of the torus correspond to one or the other of the
separating coordinates:  $\theta_1 = \theta_1(\nu, p_{\nu}; g,h)$, 
$\theta_2 = \theta_2(\lambda, p_{\lambda}; g,h)$.  It follows that windows
have the form 
  $\theta_1 =const$ or $\theta_2 = const$. What constants?
  In other words, 
upon being projected onto either  one-degree of freedom system, the   windows correspond to  points (or the empty set).
   But which points and how are they separated? 

The explicit maps $\theta_1, \theta_2$ as functions of  mapping from the regularised coordinates $(\nu, \lambda, p_\nu, p_{\lambda})$ 
are   complicated expressions  in terms of    elliptic functions. 
Fortunately, this detailed information is not needed to answer our question  due to the presence   discrete symmetries  for the separated Hamiltonians.

The symplectic involution
$R_\lambda(\lambda, p_\lambda) = (-\lambda, -p_\lambda)$
leaves $H_\lambda$ invariant and hence commutes with the time evolution for that flow. 
The symplectic involution  
$R_\nu(\nu, p_\nu) = (\pi - \nu, -p_\nu)$
leaves $H_\nu$ invariant and so commutes with its time evolution 
(If $\tilde R_\lambda, \tilde R_\nu$ denotes the extension of these involutions
to the full regularized plane: for example, $\tilde R_\nu(\lambda, \nu, p_{\lambda}, p_\nu)  =  (\lambda, \pi - \nu,p_{\lambda}, -p_\nu)$
then the composition of the two involutions $ \tilde R = \tilde R_\nu \circ \tilde R_\lambda$ 
 is an involution on the double cover that leaves
the original point $(x,y,p_x, p_y)$ fixed.)
Rewritten in terms of angle coordinate, $\theta = \theta_1$ or $\theta_2$,
either $R$ must commute with translation (since it commutes with its Hamiltonian flow) and thus is of the form $R(\theta) = \theta + p$
for some constant $p$. But since each $R$ is a nontrivial involution we must have
that $p = 1/2$.

For the $\lambda$ motion these window points are
\begin{itemize}
\item[S',S,L:] the  two points   $(0, p_\lambda)$, $(0, -p_{\lambda})$ for the  symbol 3.
exchanged under $R_\lambda$. (Note that $p_{\lambda} =0$  above corresponds to the purely linear motion, and a degenerate torus,
which we have exculded from considerations.) 
\item[$P:$]  No symbol occurs.
\end{itemize}
As we just saw, $R_{\lambda}$ has the form
$R_{\lambda} (\theta_2) = \theta_2 + 1/2$, proving that the corresponding window points a half a unit apart,
and so the corresponding lines are half a unit apart. 
 In other words, the window for the symbol `3' consists of   two horizontal lines   
separated by half a lattice unit.  

For the $\nu$ motion the window points   are
\begin{itemize}
\item[S1:]  two points of the form $(\pi/2, \pm p_\nu)$ for the  symbol 1,
exchanged  by $R_\nu$.
\item[S2:]   two points of the form $(-\pi/2, \pm p_\nu)$ for the  symbol 2,
exchanged  by  $R_\nu$.
\item[P:] a point $(\pi/2, p_\nu)$ for symbol 1   and a  point $(-\pi/2, \tilde p_\nu)$ for  symbol 2,
where $p_\nu > 0$ and $\tilde p_\nu > 0$.
There are two more such points with negative momenta which belong to a disjoint orbit of $H_\nu$
(``prograde'' or ``retrograde'' motion)
The two groups of points are mapped into each other by $R_\nu$.
\end{itemize}
By an argument identical to the $\lambda$-case, the two window points in each
case are separated by a half, and so the corresponding vertical lines on the torus
are separated by half a unit.  

The  resulting windows on the flattened torus for the three case 
L, S, P are thus as shown in figure~\ref{fig:windows}.

The location of the windows relative to the origin of the flattened torus depends on the choice of origin of the two angles $\theta_1$ and $\theta_2$ and is irrelevant.
The intersection of   windows correspond to   collisions, 
each of which appears twice since the regularisation 
led to a double cover of configuration space.

\begin{figure}
 \includegraphics[width=10cm]{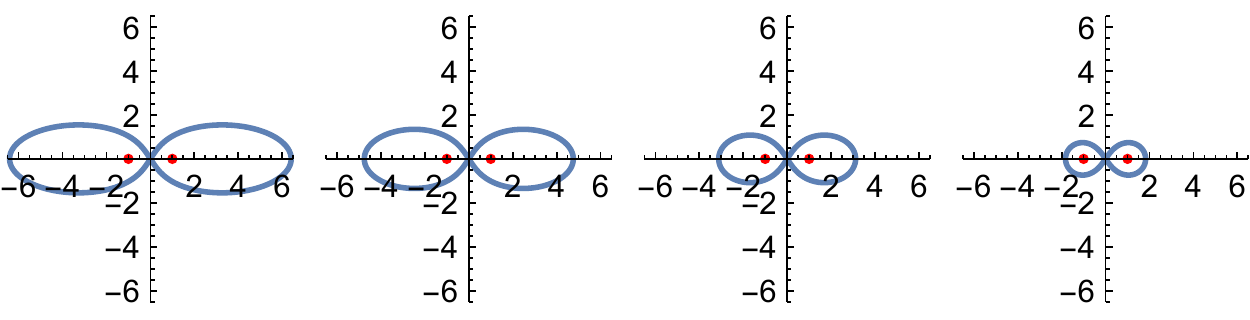}
\includegraphics[width=10cm]{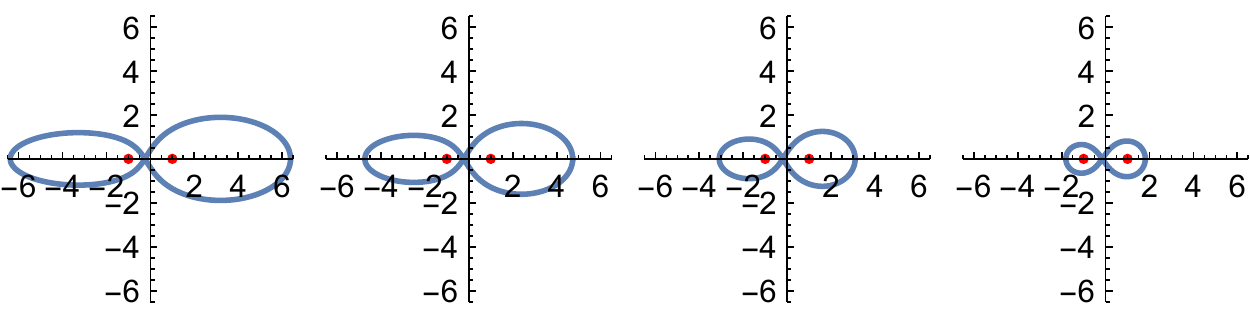}
 \includegraphics[width=10cm]{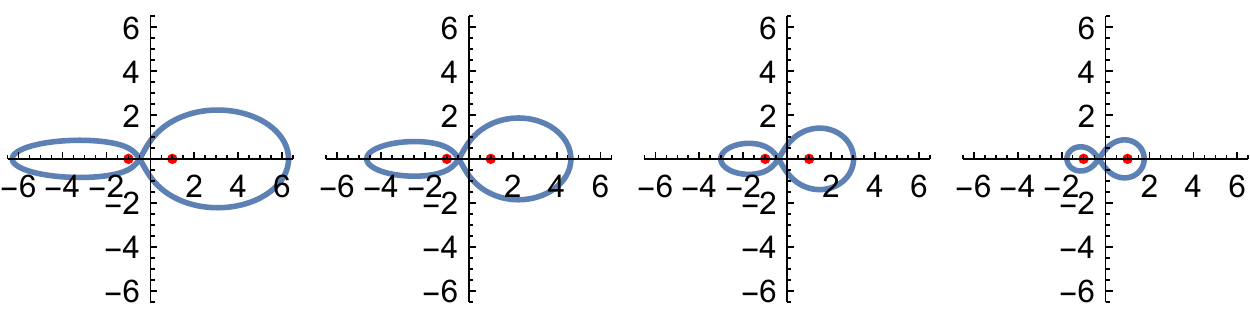}
\includegraphics[width=10cm]{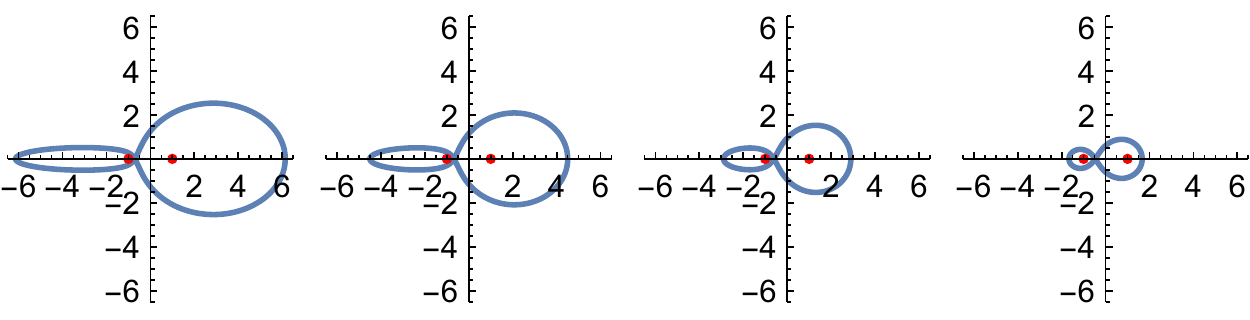}
 \includegraphics[width=10cm]{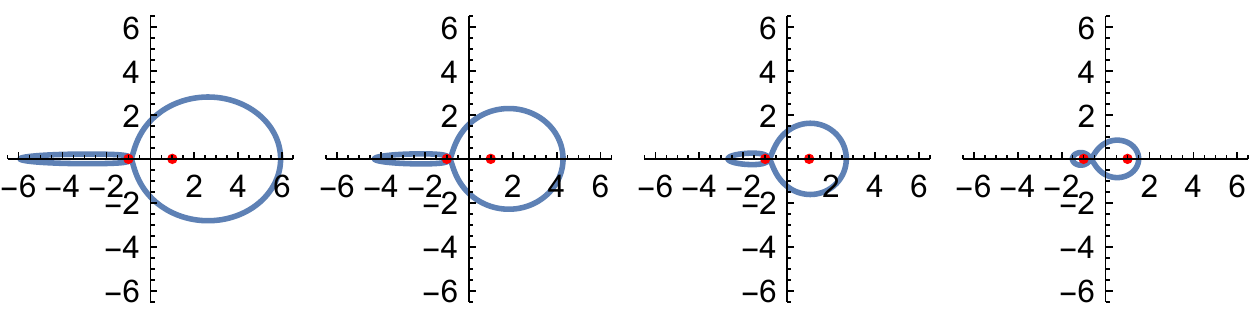}
 \caption{Orbit of type L with $W = 1$ and syzygy sequence $1323$ for 
 various values of $h = -0.15, -0.2, -0.3, -0.5$ from left to right 
 and values of $m_1 / m_2 = 0.5, 0.4, 0.3, 0.2, 0.1$ from top to bottom
 all with $m_1 + m_2 = d = 1$. For type $L$ and $h > h_\lambda = -1/2$ 
 all orbits exist in such 2-parameter families.
 } \label{fig:resilience}
 \end{figure}

\section{Additional Remarks}

%

{\bf Remark}: When $W = p/q$ with $q$ even the symbol sequence of type L
repeats after $p+q$ symbols. When $q$ is odd the 2nd half of the symbol 
sequence is like the first with symbols 1 and 2 exchanged. 
For periodic orbits of type S the two halves of the symbol sequence are always identical.

{\bf Remark}: There are no break-orbits of type L, since for these 
$p_\nu$ is always non-zero. For type S there are  
break-break orbits for $W = p/q$ with $p$ or $q$ even.
When both $p$ and $q$ are odd the corresponding symmetric periodic orbit 
has reflection symmetry about the $x$-axis (but no break points), 
and the corresponding collision orbit is a break-collision orbit.

{\bf Remark}: Discrete symmetry of periodic orbits. Let $W = p/q$. \\
Type L, arbitrary masses: 
If $q$ is odd, there is an orbit that has a crossing of the $x$-axis with a right angle.
If $p$ is odd, there is an orbit that crosses the origin.
In addition, if the masses are equal and $p$ or $q$ are even, 
there is an orbit that has a crossing of the $y$-axis with a right angle. 

Type S, arbitrary masses: 
If $p$ or $q$ are even, there is a break-orbit.
If $q$ is odd there is an orbit that has a crossing of the $x$-axis with a right angle.


{\bf Remark}: A non-obvious consequence of the proof and the description from section~\ref{sec:Euler}   is that 
there are no bifurcations in the orbit structure and the symbol sequences of the L-region when 
changing the mass ratios or the  energy within   the range $(h_\lambda, 0)$.
In particular all periodic orbits persists without bifurcations in this 2-parameter family.
This strong resilience of orbits for sufficiently large energy is quite astonishing. 
It is illustrated for the $W = 1/1$ orbit shown for various energies and various 
masses in figure~\ref{fig:resilience}.
For smaller energies this is not true and orbits bifurcate away on the right boundary 
of region L.

{\bf Remark}: Decreasing $h$  from $h_\lambda$ to $h^*$ (of the saddle equilibrium point) removes more and more 
rotation numbers by increasing the minimum value of the range $W \in ( W_L^{min}, \infty)$, 
thus only allowing orbits with more and more consecutive symbols 3, which describe close approaches to the 
isolated hyperbolic periodic orbit above the saddle.
It is interesting to note that for any $h > h^*$ (even arbitrarily close), the whole 
complexity of orbits generated by $\{ W \} $ passing through $[0, 1)$ is still present, 
in fact there are always infinitely many such intervals. 

{\bf Remark}: For tori of type P the standard syzygy sequences do not separate tori,
because symbol 3 does not occur and symbols 1 and 2 simply alternate for all allowed 
rotation numbers $W$. Instead of insisting on the usual symbols of the 3-body problem 
we can turn the construction around and choose a window on the torus so that we get 
similar symbol sequences with a different physical interpretation. 
As we already pointed out for separable integrable system the windows are best chosen 
as coordinate lines of the separating coordinate system. We choose an ellipsoidal coordinate
line $\lambda = \lambda^*(h) = const$ such that all type P tori for the given energy 
intersect this line. When approaching the left boundary of the P-region 
the torus shrinks to an ellipse with $\cosh \lambda^*(h) = -(m_1 + m_2)/( 2  h) = h_\lambda/h > 1$.
Considering the phase portrait of $H_\lambda$ it is clear that all type P tori
intersect the corresponding window. 
However, the intersection points are not spaced half-way around the torus, and hence 
would give a more complicated description. 
An alternative is to use $p_\lambda = 0$ which does cut the torus twice with equal spacing. 
The physical meaning of this is to assign a symbol to the tangency with the caustic.

\section{Conclusion}

We have   described all syzygy sequences which are realized in the two centre problem.
They are encoded by   Sturmian words as per theorem 1 and 2.

\section{Some Open Problems.} 

{\sc Orbit Counts.} This  work   began  as a warm-up for   understanding  the  syzygy sequences
arising  in the full planar  three-body problems. Does it shed  light
on that problem, or on restricted versions of it?  We  take a peek at this question from
the perspective of orbit counting. 

Let $N(L)$ be the number of periodic orbits of length less than $L$
in a dynamical system. 
For chaotic systems such as Axiom A  systems $N(L)$ grows exponentially with $L$, 
while for integrable systems one expects $N(L)$ to  grow polynomially.  Various theorems
have been established relating $N(L)$ to the  topological entropy of a system. 
In  planar three-body problems  we will instead  let $N(L)$ be    the number of distinct syzygy sequences
of length  $L$  which are suffered by some   periodic orbit during   one period. 
(The original $N(L)$, the  number of orbits themselves is uncountably infinite since the orbits
of a particular topological type,  forms a continuum in an integrable system,
sweeping out all of a torus, or a torus minus collisions.  So we do not want to count
individual orbits.) 
 Being integrable,  we expect that $N(L)$ grows polynomially for the two centre problem. 
 Indeed, $N(L)$ grows like $L^2$.   To estimate  this growth, recall that altogether  P orbits   yield a single    syzygy sequence
$1212 \ldots$.  To count the number of sequences due to L and S orbits,  
 recall that these sequences  are encoded by their rational rotation number $W = p/q$, $W > 0$
 with  the length $L$ of the   sequence being $2 (p+q)$.  So
$N(L)$ is bounded by the number of lattice points $(p,q)$ inside the
diamond $|x|+|y|  \le \frac{1/2} L$, which is $\frac{1}{8} L^2$.  
We   count   both the S and L type, yielding the   bound $N(L) \le \frac{1}{4} L^2 + 1$.

On the other hand, if we use the strong force potential $-m_1/r_1 ^2 -m_2 / r_2 ^2$,  
then one can prove  using variationally methods, that all syzygy sequences are realized in the two centre problem 
except for those with ``stutters'', meaning two of the same type of symbols adjacent to each other.
One estimates   $N(L) \ge 3 * 2^{L-2}$ for the number of such symbols.

The two-centre problem fits  within various one-parameter  families of 2-degree of freedom 
Hamiltonian systems $H_{\lambda} (x,y, p_x, p_y)$. For example, by ``spinning'' the primaries we can fit it into a
one-parameter family, parameterized by the spin rate of the primaries, which at the other endpoint
becomes the  restricted three-body problem.
Or, by changing the potential, we can fit the problem into a one-parameter family which  
includes  strong-force two center problem.   How does $N_{\lambda} (L)$
depend on $\lambda$ for these other problems? 

{\sc Sturn and twist} As remarked above, Sturmian words arise generically in twist maps  \cite{AubryLeDaeron83}.
This  ``Sturmian thread''   connecting the two centre problem to general   twist maps  
gives us   hope that  enough twist-map  structure     persists 
as we  turn on  the  
spin  parameter so that many of the periodic orbits or
quasi-periodic irrational Sturmian type orbits will   persist  all the way to the restricted three body problem.

\section{Acknowledgment}

We  thank  Rick Moeckel (U. Minnesota, US) and Ivan Sterling (St Mary's College, Maryland, US) for accompanying us on
the initial leg of this journey. We  thank Michael Magee (Princeton) for alerting us to the long history of  
``Sturmian words'' going back to  Morse and Hedlund.  
RM   thanks the support of the  NSF DMS1305844  grant
and HD  the  Australian ARC DP110102001 grant. 
HRD would like to thank the UC Santa Cruz Department  of Mathematics for their hospitality.


\bibliographystyle{plain}
\bibliography{../bib_cv/all,../bib_cv/hd}

\end{document}